\DeclareMathAlphabet{\mathsf}{OT1}{\sfdefault}{m}{n}
\SetMathAlphabet{\mathsf}{bold}{OT1}{\sfdefault}{b}{n}
\DeclareMathAlphabet{\mathfrak}{U}{jkpmia}{m}{it}
\SetMathAlphabet{\mathfrak}{bold}{U}{jkpmia}{bx}{it}
\numberwithin{equation}{section}
\definecolor{WIMgreen}{RGB}{60 134 132}
\definecolor{UMblue}{RGB}{4 47 86}
\definecolor{myteal}{RGB}{0 123 137}
\definecolor{material_green}{RGB}{27 43 52}
\definecolor{dracula_pink}{RGB}{180 93 149}
\definecolor{dracula_blue}{RGB}{40 42 54}
\definecolor{dracula_turq}{RGB}{92 143 159}
\definecolor{dracula_orange}{RGB}{255 184 108}
\definecolor{material_petrol}{RGB}{2 119 189}
\definecolor{Purple}{RGB}{103 58 183}
\definecolor{refkey}{gray}{.5}
\definecolor{labelkey}{gray}{.5}
\theoremstyle{plain}
\newtheorem{theorem}{Theorem}
\newtheorem{lemma}[theorem]{Lemma}
\newtheorem{assumption}[theorem]{Assumption}
\newtheorem{corollary}[theorem]{Corollary}
\newtheorem{proposition}[theorem]{Proposition}
\theoremstyle{definition}
\newtheorem{remark}[theorem]{Remark}
\providecommand{\customgenericname}{}
\newcommand{\newcustomtheorem}[2]{%
  \newenvironment{#1}[1]
  {%
   \renewcommand\customgenericname{#2}%
   \renewcommand\theinnercustomgeneric{##1}%
   \innercustomgeneric
  }
  {\endinnercustomgeneric}
}
\numberwithin{equation}{section}
\numberwithin{theorem}{section}
\DeclareMathOperator*{\argmin}{arg\,min}
\DeclareMathOperator*{\argmax}{arg\,max}
\newcommand{\sc}[2]{\langle#1,#2\rangle}
\newcommand{\norm}[1]{\lVert#1\rVert}
\let\P\relax
\DeclareMathOperator{\P}{{\mathbb P}}
\DeclareMathOperator{\PP}{{\mathbb P}}
\DeclareMathOperator{\E}{{\mathbb E}}
\DeclareMathOperator{\R}{{\mathbb R}}
\DeclareMathOperator{\N}{{\mathbb N}}
\renewcommand{\theta}{\vartheta}    
\newcommand*\diff{\mathop{}\!\mathrm{d}}
\newcommand{\lebesgue}{\bm{\lambda}}
\newcommand{\one}{\bm{1}}
\renewcommand{\hat}{\widehat}
\renewcommand{\tilde}{\widetilde}
\newcommand{\Sq}{\operatorname{Sq}}
\numberwithin{equation}{section}
\numberwithin{theorem}{section}
\title{\fontsize{16}{19} \selectfont Multivariate change estimation for a stochastic heat equation from local measurements}
\author{Anton Tiepner\thanks{Aarhus University, Department of Mathematics, Ny Munkegade 118, 8000 Aarhus C, Denmark. \newline Email: \href{mailto:tiepner@math.au.dk}{tiepner@math.au.dk}} \and Lukas Trottner\thanks{University of Birmingham, School of Mathematics, B15 2TT Birmingham, UK. \newline Email: \href{mailto:l.trottner@bham.ac.uk}{l.trottner@bham.ac.uk}}}
\date{}
\begin{document}
\maketitle
\begin{abstract}
We study a stochastic heat equation with piecewise constant diffusivity $\theta$ having a jump at a hypersurface $\Gamma$ that splits the underlying space $[0,1]^d$, $d\geq2,$ into two disjoint sets $\Lambda_-\cup\Lambda_+.$ Based on multiple spatially localized measurement observations on a regular $\delta$-grid of $[0,1]^d$, we propose a joint M-estimator for the diffusivity values and the set $\Lambda_+$ that is inspired by statistical image reconstruction methods. We study convergence of the domain estimator $\hat{\Lambda}_+$ in the vanishing resolution level regime $\delta \to 0$ and with respect to the expected symmetric difference pseudometric. As a first main finding we give a characterization of the convergence rate for $\hat{\Lambda}_+$ in terms of the complexity of $\Gamma$ measured by the number of intersecting hypercubes from the regular $\delta$-grid. Furthermore, for the special case of domains $\Lambda_+$ that are built from hypercubes from the $\delta$-grid, we demonstrate that perfect identification with overwhelming probability is possible with a slight modification of the estimation approach. Implications of our general results are discussed under two specific structural assumptions on $\Lambda_+$. For a $\beta$-Hölder smooth boundary fragment $\Gamma$, the set $\Lambda_+$ is estimated with rate $\delta^\beta$. If we assume $\Lambda_+$ to be convex, we obtain a $\delta$-rate. While our approach only aims at optimal domain estimation rates, we also demonstrate consistency of our diffusivity estimators, which is strengthened to a CLT at minimax optimal rate for sets $\Lambda_+$ anchored on the $\delta$-grid.
\end{abstract}





\section{Introduction}
Over the last decades interest in statistics for stochastic partial differential equations (SPDEs) has continuously increased for several reasons. Not only is it advantageous to model many natural space-time phenomena by SPDEs as they automatically account for model uncertainty by including random forcing terms that describe a more accurate picture of data dynamics, but also the general surge in data volume  combined with enlarged computational power of modern computers makes it more appealing to investigate statistical problems for SPDEs.

In this paper we study a multivariate change estimation model for a stochastic heat equation on $\Lambda=(0,1)^d,$ $d\geq2,$ given by 
\begin{equation}
    \label{eq: intro SPDE}
    \diff X(t)=\Delta_\theta X(t)\diff t+\diff W(t),\quad 0\leq t\leq T,
\end{equation}
with discontinuous diffusivity $\theta.$ The driving force is space-time white noise $\dot W(t)$ and the weighted Laplace operator $\Delta_\theta=\nabla\cdot\theta\nabla$ is characterized by a jump in the diffusivity
\begin{equation}
    \label{eq: intro theta}
    \theta(x)=\theta_-\mathbf{1}_{\Lambda_-}(x)+\theta_+\mathbf{1}_{\Lambda_+}(x),\quad x\in(0,1)^d,
\end{equation}
where the sets $\Lambda_{\pm}$ form a partition of $\overline{\Lambda} = [0,1]^d.$ Our primary interest lies in the construction of a nonparametric estimator of the change domain $\Lambda_+$, which is equivalently characterized by the hypersurface
\begin{equation}
    \label{eq: intro change}
    \Gamma\coloneqq\partial\Lambda_-\cap\partial\Lambda_+\subset[0,1]^d.
\end{equation}
The SPDE \eqref{eq: intro SPDE} can, for instance, be used to describe the heat flow through two distinct materials with different heat conductivity, colliding in $\Gamma$. An exemplary illustration of the solution $X$ to \eqref{eq: intro SPDE} with $\Gamma = \{(x,y): y = x, 0 \leq x,y \leq 1\}$ in two spatial dimensions is given in Figure \ref{fig:heat_change}.
\begin{figure}[t!]
\centering
\begin{subfigure}{.32\textwidth}
		\centering\includegraphics[width=1.\linewidth]{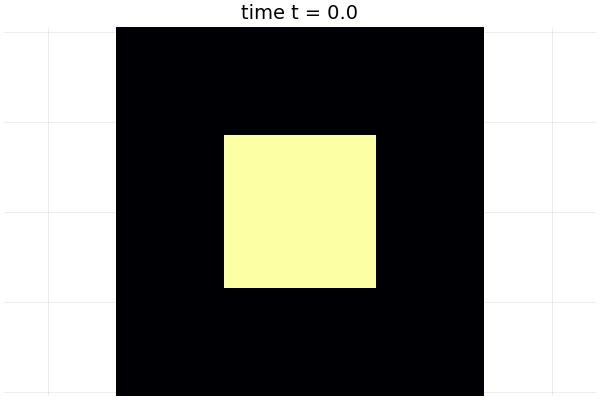}
	\end{subfigure}
\begin{subfigure}{.32\textwidth}
		\centering\includegraphics[width=1.\linewidth]{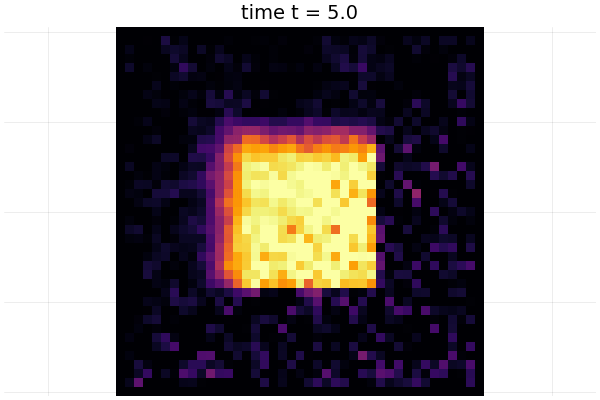}
	\end{subfigure}
 \begin{subfigure}{.32\textwidth}
		\centering\includegraphics[width=1.\linewidth]{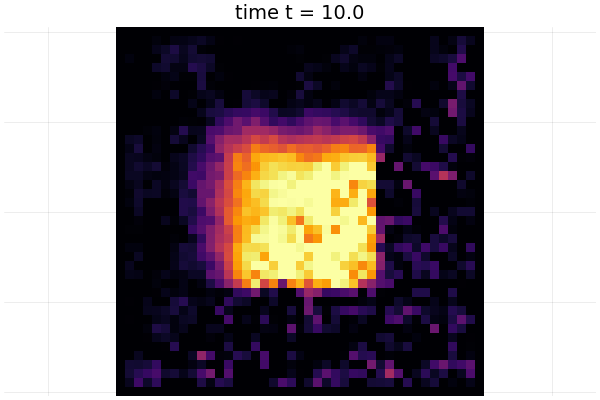}
	\end{subfigure}
 \begin{subfigure}{.32\textwidth}
		\centering\includegraphics[width=1.\linewidth]{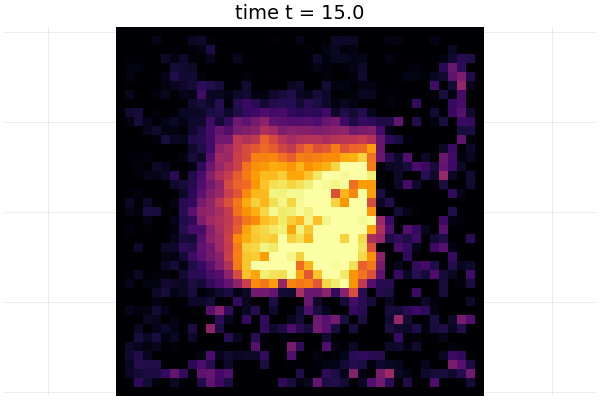}
	\end{subfigure}
 \begin{subfigure}{.32\textwidth}
		\centering\includegraphics[width=1.\linewidth]{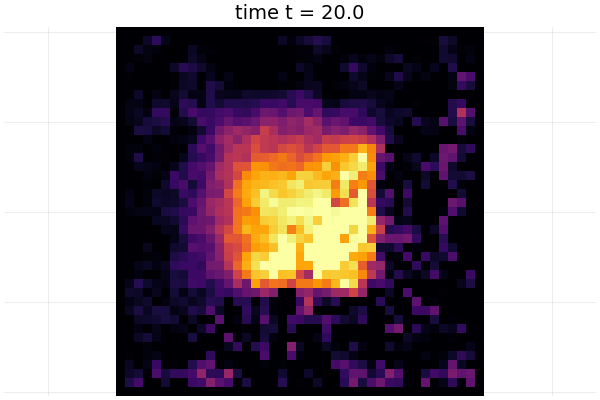}
	\end{subfigure}
 \begin{subfigure}{.32\textwidth}
		\centering\includegraphics[width=1.\linewidth]{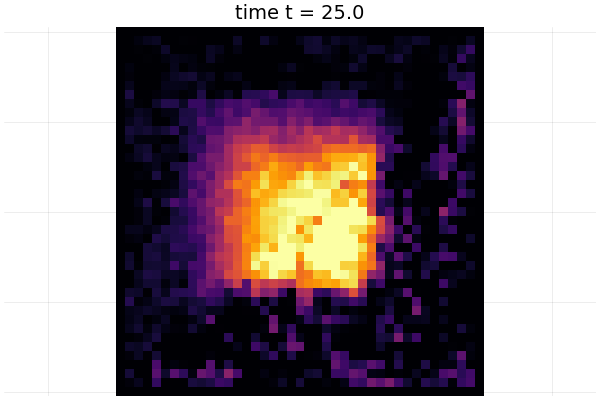}
	\end{subfigure}
\caption{Realization of the solution $X$ to $\eqref{eq: intro SPDE}$ in $d=2$ with change curve $\Gamma = \{(x,y): y = x, 0 \leq x,y \leq 1\}$ at increasing time points. $X_0=5\cdot\mathbf{1}_{(x,y)\in[0.3,0.7]^2}$, $\theta_-=0.3$ and $\theta_+=3.$}
\label{fig:heat_change}
\end{figure}

Structurally, the statistical problem of estimating $\Gamma$ is closely related to image reconstruction problems where one typically considers a regression model with (possibly random) design points $X_k$ and observational noise  $\varepsilon_k$ given by
\[Y_k= f(X_k)+\varepsilon_k,\quad 1\leq k\leq N,\]
where the $Y_k$ correspond to the observed color of a pixel centred around the spatial point $X_k\in[0,1]^d$, and the otherwise continuous function $f\colon [0,1]^d\rightarrow[0,1]$ has a discontinuity along the hypersurface $\Gamma,$ that is 
\[f(x) = f_-(x) \one_{\Lambda_-}(x) + f_+(x)\one_{\Lambda_+}(x), \quad x \in [0,1]^d.\]
Such problems are, for instance, studied in \cite{koro95a,koro95b,tsy93,tsy94,mueller_cube_94,mueller_indexed_96,mueller_maximin_94,rudemo_boundary_94,qiu_jump_2007,carlstein_boundary_94}. Assuming specific structures such as boundary fragments \cite{tsy94} or star-shapes \cite{rudemo_boundary_94,rudemo_1994_approx}, nonparametric regression methods are employed to consistently estimate both the image function $f$ as well as the edge $\Gamma$, which in the boundary fragment case is characterized as the epigraph of a function $\tau \colon [0,1]^{d-1} \to [0,1]$. The rates of convergence depend on the smoothness of $f$ and $\tau$, the dimension $d$ as well as the imposed distance function. Moreover, optimal convergence rates for higher-order Hölder smoothness $\beta > 1$ can in general not be achieved under equidistant, deterministic design, cf.\ \cite[Chapter 3-5]{tsy93}. 

Estimation of scalar parameters in SPDEs is well-studied in the literature. When observing spectral measurements $(\sc{X(t)}{e_k})_{0\leq t\leq T, k\leq N}$ for an eigenbasis $(e_k)_{k\in\N}$ of a parameterized differential operator $A_\theta$, \cite{huebner_asymptotic_1995} derive criteria for identifiability of $\theta$ depending on the order of $A_\theta$ and the dimension. This approach was subsequently adapted to joint parameter estimation \cite{lototsky_parameter_2003}, hyperbolic equations \cite{lototsky_multichannel_2010}, lower-order nonlinearities \cite{pasemann_drift_2020}, temporal discretization \cite{cialenco_drift_2019} or fractional noise \cite{cialenco_asymptotic_2009}. If only discrete points $X(x_k,t_i)$ on a space-time grid are available, then estimation procedures relying on power variation approaches and minimum-contrast estimators are analyzed, amongst others, in \cite{kaino_parametric_2021,tonaki2022parameter,hildebrandt_parameter_2019,chong_high-frequency_2020}. For a comprehensive overview of statistics for SPDEs we refer to the survey paper \cite{cialenco_statistical_2018} and the website \cite{altmeyer_web}.

Our estimation approach is based  on \textit{local measurements}, as first introduced in \cite{altmeyer_nonparametric_2020}, which are continuous in time and localized in space around $\delta$-separated grid center points $x_\alpha\in(0,1)^d$, $\alpha \in \{1,\ldots,\delta^{-1}\}^d$. More precisely, for a compactly supported and sufficiently smooth kernel function $K$ and a resolution level $\delta \in 1/\N$, we observe
\begin{align*}
    &(X_{\delta,\alpha}(t))_{0\leq t\leq T}=(\sc{X(t)}{K_{\delta,\alpha}})_{0\leq t\leq T},\\
    &(X_{\delta,\alpha}^\Delta(t))_{0\leq t\leq T}=(\sc{X(t)}{\Delta K_{\delta,\alpha}})_{0\leq t\leq T},\quad \alpha \in \{1,\ldots,\delta^{-1}\}^d,
\end{align*}
for the rescaled and recentered functions $K_{\delta,\alpha}(\cdot)=\delta^{-d/2}K(\delta^{-1}(\cdot-x_\alpha))$. The function $\delta^{-d/2}K(-\delta^{-1}\cdot)$ is also referred to as \textit{point-spread function}, which is motivated from applications in optical systems, and the local measurement $X_{\delta,\alpha}$ represents a blurred image---typically owing to physical measurement limitations---that is obtained from convoluting the solution with the point spread function at the measurement location $x_\alpha$. The asymptotic regime $\delta \to 0$ therefore allows for higher resolution images of the heat flow at the chosen measurement locations. 
Given such local measurements, we employ a modified likelihood approach leading to an M-estimator for the quantities $(\theta_-,\theta_+,\Lambda_+).$ 

Since their introduction in \cite{altmeyer_nonparametric_2020}, local measurements have been used in numerous statistical applications. In \cite{altmeyer_nonparametric_2020} it was shown that a continuously differentiable diffusivity $\theta$ can be identified at location $x_\alpha$ from the observation of a single local measurement $(X_{\delta,\alpha}(t))_{0\leq t\leq T}.$ Subsequently, their approach has been extended to semilinear equations \cite{altmeyer_parameterSemi_2020}, convection-diffusion equations \cite{altmeyer_anisotrop2021,strauch_velocity_2023}, multiplicative noise \cite{janak_2023_multiplicative} and wave equations \cite{ziebell_wave23}. In \cite{altmeyer_parameter_2020} the practical relevance of the method has been demonstrated in a biological application to cell repolarization. 

Closely related to this paper is the one-dimensional change point estimation problem for a stochastic heat equation studied in \cite{reiß_2023_change}, which should be understood as the one-dimensional analogue to our problem setting. Indeed, in $d=1$, the estimation of \eqref{eq: intro change} boils down to the estimation of a single spatial change point at the jump location of the diffusivity. In \cite{reiß_2023_change} two different jump height regimes are analyzed, where the absolute jump height is given by $\eta \coloneqq \lvert \theta_+ - \theta_-\rvert$. In the vanishing jump height regime $\eta\rightarrow0$ as $\delta\rightarrow0$, the authors demonstrate distributional convergence of the centralized change point estimator, where the asymptotic distribution is given by the law of the minimizer of a two-sided Brownian motion with drift, cf.\ \cite[Theorem 4.2]{reiß_2023_change}. In contrast, if $\eta$ is uniformly bounded away from $0$, it is shown in \cite[Theorem 3.12]{reiß_2023_change} that the change point can be identified with rate $\delta$ while the estimators for $(\theta_-,\theta_+)$ achieve the optimal rate $\delta^{3/2}$ in one dimension, cf.\ \cite{altmeyer_anisotrop2021} regarding optimality for parameter estimation, also in higher dimensions. 

 While \cite{reiß_2023_change} focus on the one-dimensional case, some core technical results on the order and concentration of the empirical local Fisher observations translate immediately to our multivariate setting, since the employed techniques are spectral in nature, cf.\ Proposition \ref{prop: genweaksol} and Propositions  \ref{lem: orderFisher} -- \ref{prop:coupling} in Section \ref{sec: estconst}.
This allows us in our multivariate setting \eqref{eq: intro SPDE} to focus on the substantial challenges arising from the fact that change estimation is no longer a parametric problem but becomes an inherently nonparametric and geometric one that requires additional techniques from statistics for SPDEs and image reconstruction. Concretely, we may either target $\Lambda_+$ directly or indirectly via estimation of the change interface \eqref{eq: intro change} (often referred to as edge estimation in image reconstruction). In this paper, we will first discuss the estimation problem for general sets $\Lambda_+$ and then specialize our estimation strategy and result to specific domain  shapes. 

Let us briefly describe our estimation approach in non-technical terms. For simplicity and to underline the correspondence to image reconstruction problems, let us consider for the moment only the case $d=2$. We may then interpret the regular $\delta$-grid as pixels, indexed by $\alpha \in \{1,\ldots,\delta^{-1}\}^2$. By the nature of local observations that give only aggregated information on the heat flow on each of these pixels, the best we can hope for is a good approximation of a pixelated version of the true ``foreground image'' $\Lambda_+^0$ that we wish to distinguish from the true ``background image'' $\Lambda_-^0$. The pixelated version $\Lambda^\updownarrow_+$ is defined as the union of pixels that have a non-zero area intersection with $\Lambda_+^0$ as illustrated in Figure \ref{fig:minkowski_intro}.
\begin{figure}[t!]
\centering
\begin{subfigure}{.49\textwidth}
		\centering\includegraphics[width=1.\linewidth]{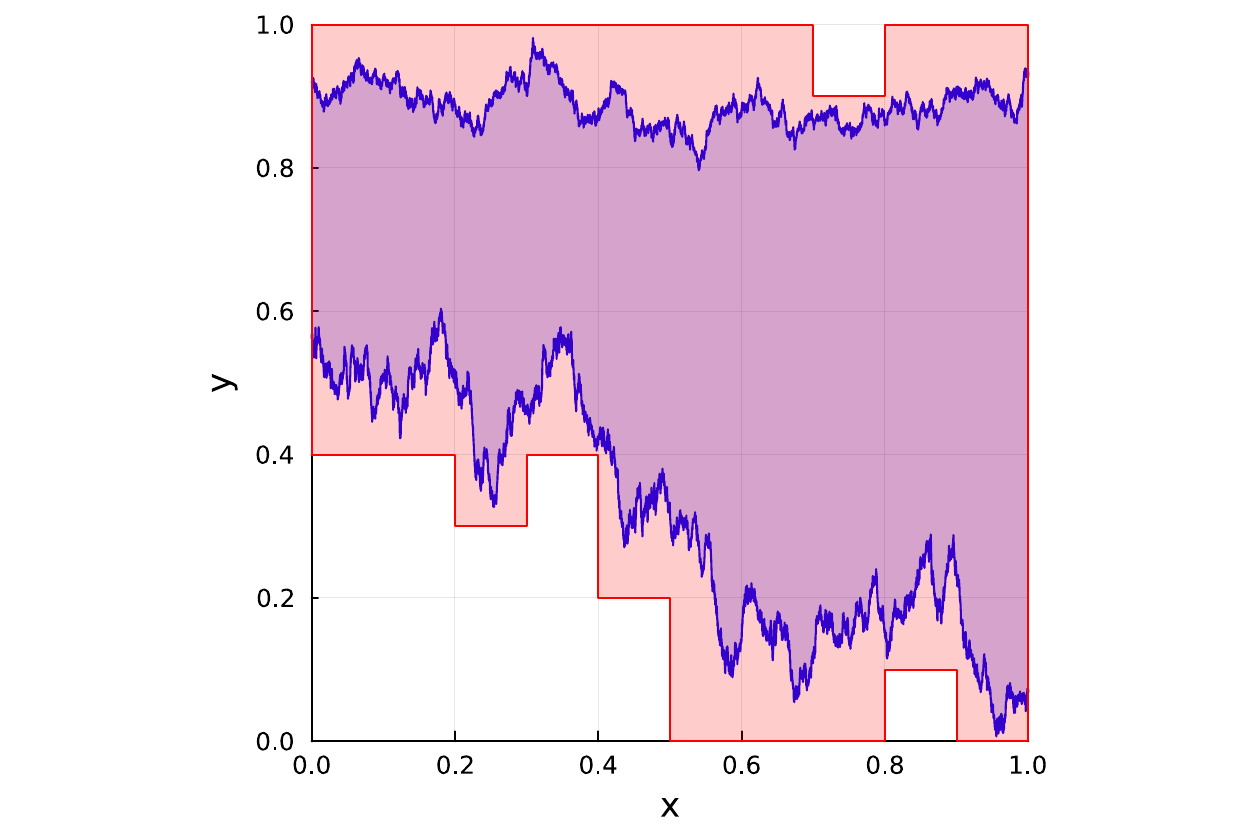}
	\end{subfigure}
\begin{subfigure}{.49\textwidth}
		\centering\includegraphics[width=1.\linewidth]{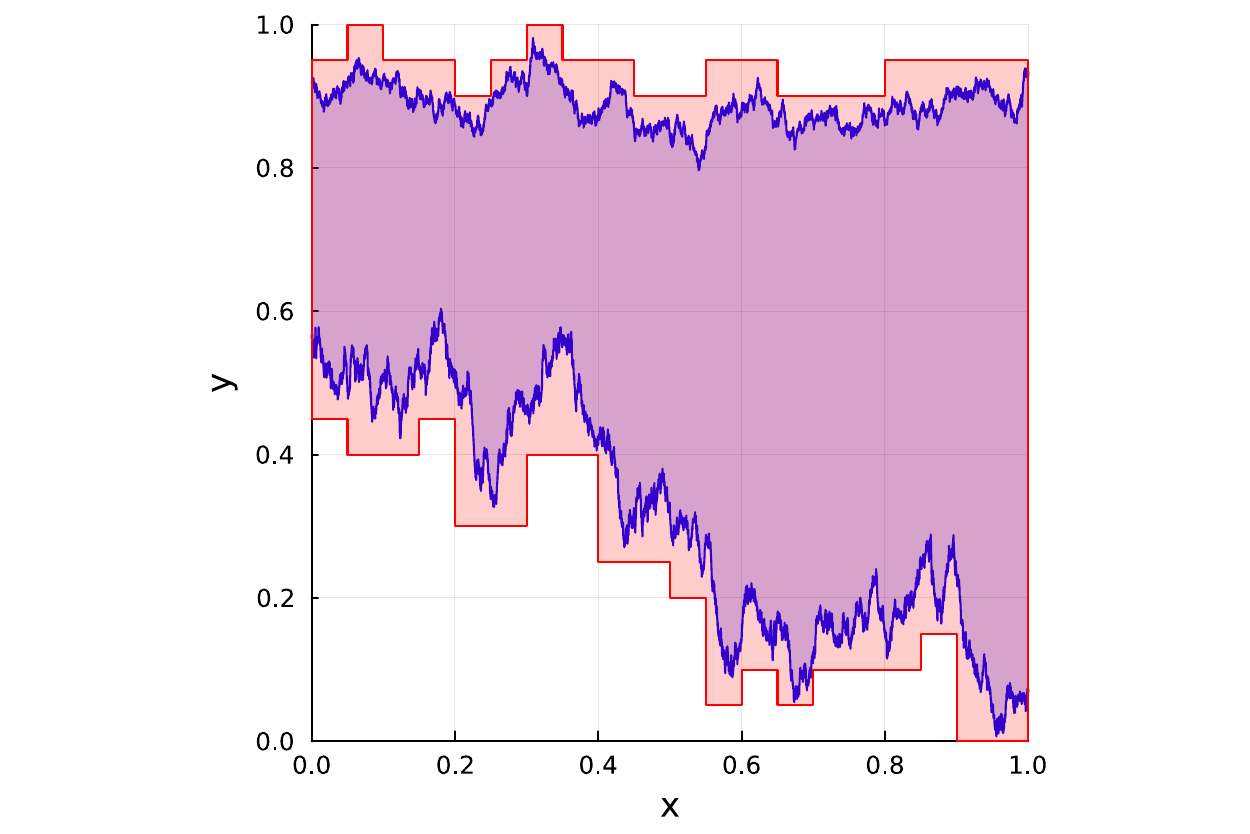}
	\end{subfigure}
\caption{$\Lambda_+^0$ (blue) is approximated by the pixelated version $\Lambda_+^\updownarrow$ (red); left: $\delta=0.1$; right: $\delta=0.05$.}
\label{fig:minkowski_intro}
\end{figure}
Based on a generalized Girsanov theorem for Itô processes, we can assign a modified local log-likelihood $\ell_{\delta,\alpha}(\theta_-,\theta_+,\Lambda_+)$ to each $\alpha$-pixel for all pixelated candidate sets $\Lambda_+ \in \mathcal{A}_+$ that assigns the diffusivity value $\theta_\pm$ to the $\alpha$-pixel if and only if $x_\alpha \in \Lambda_\pm$. An estimator $(\hat{\theta}_-,\hat{\theta}_+,\hat{\Lambda}_+)$ is then obtained as the maximizer of the aggregated contrast function 
\[(\theta_-,\theta_+,\Lambda_+) \mapsto \sum_\alpha \ell_{\delta,\alpha}(\theta_-,\theta_+,\Lambda_+).\]
Let us emphasize that we only require $\Lambda^\updownarrow_+ \in \mathcal{A}_+$ of the pixelated candidate sets $\mathcal{A}_+$, which, given specific information on the shape of the true domain $\Lambda^0_+$, allows for much more parsimonious choices than the canonical choice of all possible black and white $\delta^{-1} \times \delta^{-1}$-images. On a more technical note, to establish the convergence bound, we reformulate our estimator as an M-estimator based on an appropriate empirical process $\chi \mapsto Z_\delta(\chi)$, so that quite naturally, concentration analysis of $Z_\delta$ becomes key. 
 The basic idea of taking $\hat{\Lambda}_+$ as union of best explanatory pixels by optimizing over a given family of candidate sets originates  from classical statistical image reconstruction methods \cite{tsy94,tsy93,mueller_cube_94,mueller_indexed_96}.

The convergence rate of our estimator $\hat{\Lambda}_+$ is entirely characterized by the complexity of the separating hypersurface $\Gamma$ that induces a bias between the true domain $\Lambda^0_+$ and its pixelated version $\Lambda^0_+$. In particular, assuming that the set $\mathcal{B}$, describing the number of pixels that are sliced by $\Gamma$ into two parts of non-zero volume, is of size 
\begin{equation}\label{eq:boundary_complex}
\lvert \mathcal{B} \rvert\lesssim \delta^{-d+\beta}, \quad \beta \in (0,1],
\end{equation}
we show in Theorem \ref{theo:rate_cp} that 
\[
\E[\lebesgue(\hat{\Lambda}_+\vartriangle\Lambda_+^0)]\lesssim\delta^\beta,\]
with the symmetric set difference $\vartriangle$. This result immediately entails estimation rates for $\Lambda^0_+$ in terms of the  Minkowski dimension of its boundary. 
Furthermore, the estimation procedure results in the diffusivity parameter estimation rates $|\hat{\theta}_\pm-\theta_\pm|=O_{\PP}(\delta^{\beta/2})$, which yields the same estimation rate for the diffusivity or ``image'' estimator $\hat{\theta} \coloneqq \hat{\theta}_+ \one_{\hat{\Lambda}_+} + \hat{\theta}_-\one_{\hat{\Lambda}_-}$. 

In the limiting case where $\lvert \mathcal{B} \rvert = 0$, we show that a modification $(\hat{\theta}{}^\ast_-,\hat{\theta}{}^\ast_+, \hat{\Lambda}{}^\ast_+)$ of the estimators allows for strongly improved identification results. More precisely in Theorem \ref{theo:tiling} and Theorem \ref{theo:clt}, we demonstrate that perfect identification of the change domain $\Lambda^0_+$ with overwhelming probability is possible in the sense that 
\[\lim_{\delta \to 0} \PP\big(\hat{\Lambda}^\ast_+ = \Lambda^0_+ \big) = 1\]
and that a CLT at minimax optimal rate $\delta^{d/2+1}$ holds for the diffusivity estimators $\hat{\theta}^\ast_\pm$, provided that we can construct candidate sets $\mathcal{A}_+$ that only grow polynomially wrt the inverse resolution $\delta^{-1}$. The assumption $\lvert \mathcal{B} \rvert = 0$  corresponds to $\Lambda^0_+$ being anchored on the $\delta$-grid, which is the perspective taken in the works \cite{mueller_cube_94, mueller_indexed_96} on statistical image reconstruction and in our case liberates the convergence analysis of the change domain estimator from the dominating geometric bias induced by imperfect grid approximations of $\Lambda^0_+$ when $\lvert \mathcal{B} \rvert > 0$.

To make these general estimation strategies and results concrete, we apply it to two specific shape constraints on $\Lambda_+^0$. Assuming that $\Gamma$ is a boundary fragment that is described by a change interface with graph representation $\tau^0\colon [0,1]^{d-1}\rightarrow[0,1]$, that is, $\Lambda_+^0 = \{(x,y) \in [0,1]^d: y > \tau^0(x)\}$, we choose closed epigraphs of piecewise constant grid functions $\tau \colon [0,1]^{d-1} \to [0,1]$ as candidate sets $\mathcal{A}_+$. The boundary of the estimator $\hat{\Lambda}_+$ may then be interpreted as the epigraph of a random function $\hat{\tau}\colon [0,1]^{d-1} \to [0,1]$ that gives a nonparametric estimator of the true change interface $\tau^0$. Then, given 
$\beta$-Hölder smoothness assumptions on $\tau^0$, where $\beta\in(0,1],$ we can verify \eqref{eq:boundary_complex} and obtain 
\begin{equation*}
    \E[\norm{\hat{\tau}-\tau^0}_{L^1([0,1]^{d-1})}]\lesssim\delta^\beta\quad\text{or equivalently}\quad \E[\lebesgue(\hat{\Lambda}_+\vartriangle\Lambda_+^0)]\lesssim\delta^\beta.
\end{equation*}
When $\tau^0$ is already perfectly represented by a grid function, our stronger estimation results from Theorem \ref{theo:tiling} and Theorem \ref{theo:clt} apply. In a second model, we  assume that $\Lambda_+^0$ is a convex set with boundary $\Gamma$. Based on the observation that any ray that intersects the interior of a convex set does so in exactly two points, we construct a family of candidate sets $\mathcal{A}_+$ of size $\lvert \mathcal{A}_+ \rvert \asymp\delta^{-(d+1)}$ and show that
\[\E[\lebesgue(\hat{\Lambda}_+\vartriangle\Lambda^0_+)]\lesssim\delta.\]
For the particular case of a hypercuboid supported on the $\delta$-grid, these results again are strengthened according to Theorem \ref{theo:tiling} and \ref{theo:clt}.
Optimality of the obtained rates in both models is discussed in the related image reconstruction problem.  

\smallskip
\noindent \textbf{Outline}
The paper is structured as follows. In Section \ref{sec: setup} we formalize the model and discuss fundamental properties of the solution to \eqref{eq: intro SPDE}. The general estimation strategy and our main results are given in Section \ref{sec: estconst}. In Section \ref{sec: main}, those findings are applied to the two explicitly studied change domain structures outlined above. Lastly, we summarize our results and discuss potential extensions in future work in Section \ref{sec: summary}.  Remaining technical proofs are deferred to Section \ref{app:proofs}.

\smallskip
\noindent\textbf{Notation}
Throughout this paper, we work on a filtered probability space $(\Omega, \mathcal{F},(\mathcal{F}_t)_{0\leq t\leq T},\P)$ with fixed time horizon $T<\infty$. The resolution level $\delta$ is such that $n\coloneqq\delta^{-1}\in\N$ and $N\coloneqq n^d.$ For a set $A \subset \R^d$, the notation $A^\circ$ is exclusively reserved for its interior in $\R^d$ endowed with the standard Euclidean topology. For a general topological space $\mathcal{X}$ and a subset $A \subset \mathcal{X}$, we denote its interior by $\operatorname{int} A$, let $\overline{A}$ be its closure and $\partial A$ be its boundary in $\mathcal{X}$.  If not mentioned explicitly otherwise, we always understand the topological space in this paper to be $\mathcal{X} = [0,1]^d$ endowed with the standard subspace topology. 
For two numbers $a,b \in \R$, we write $a\lesssim b$ if $a\leq Cb$ holds for a constant $C$ that does not depend on $\delta$. 
For an open set $U\subset\R^d$, $L^2(U)$ is the usual $L^2$-space with inner product $\sc{\cdot}{\cdot}_{L^2(U)}$ and we set $\sc{\cdot}{\cdot}\coloneqq\sc{\cdot}{\cdot}_{L^2(\Lambda)}$. 
The Euclidean norm of a vector $a \in\R^p$ is denoted by $\lvert a \rvert$. If $f\colon \R^d \to \R^d$ is a vector valued function, we write $\lVert f \rVert_{L^2(\R^d)} \coloneqq \lVert \lvert f \rvert \rVert_{L^2(\R^d)}$. 
By $H^k(U)$ we denote the usual Sobolev spaces, and let $H_0^1(U)$ be the completion of $C_c^{\infty}(U)$, the space of smooth compactly supported functions, relative to the $H^1(U)$ norm. 
The gradient, divergence and Laplace operator are denoted by $\nabla$, $\nabla\cdot$ and $\Delta$, respectively.
\section{Setup}
We start by formally introducing the SPDE model, discussing existence of solutions and introducing the local measurement observation scheme that we will be working with in our statistical analysis.
\label{sec: setup}
\subsection{The SPDE model}
\label{sec: SPDE model}
In the following we consider a stochastic partial differential equation on $\Lambda \coloneqq (0,1)^d$, where $d \geq 2$, with Dirichlet boundary condition, which is specified by
\begin{equation}
	\label{eq: SPDE}
\begin{cases}
	   \diff X(t)=\Delta_\theta X(t)\diff t+\diff W(t), &0\leq t\leq T,\\
      X(0) \equiv 0,\\
      X(t)|_{\partial_\Lambda}=0, &0\leq t\leq T,
    \end{cases}
\end{equation}
for driving space-time white noise $(\dot{W}(t))_{t \in [0,T]}$ on $L^2(\Lambda)$. The operator $\Delta_\theta$ with domain $\mathcal{D}(\Delta_\theta)$ is given formally by
\begin{equation}
    \label{eq: operator}
    \Delta_\theta u=\nabla\cdot \theta\nabla u = \sum_{i=1}^d \partial_i (\theta \partial_i u),\quad u\in\mathcal{D}(\Delta_\theta),
\end{equation}
where $\theta$ is piecewise constant in space, given by
\[\theta(x)=\theta_-\mathbf{1}_{\Lambda_-}(x)+\theta_+\mathbf{1}_{\Lambda_+}(x), \quad x \in (0,1)^d,\]
for two measurable  and disjoint sets $\Lambda_{\pm}$ s.t.\ $\Lambda_- \cup \Lambda_+ = [0,1]^d$ and $\theta_-,\theta_+ \in [\underline{\theta},\overline{\theta}] \subset (0,\infty)$.  Equivalently, we may rewrite the diffusivity in terms of the jump height $\eta \coloneqq \theta_+ - \theta_-$ as 
\[\theta(x) = \theta_- + \eta \one_{\Lambda_+}(x), \quad x \in (0,1)^d.\]
Under these assumptions, $-\Delta_\theta$ is a uniformly elliptic divergence-form operator. We assume that $\Lambda_-$ and $\Lambda_+$ are separated by a hypersurface $\Gamma$ parameterizing the set of points in the intersection of the boundaries
\begin{equation}
    \label{eq: changepoint}
    \Gamma\coloneqq\partial\Lambda_+ =\partial\Lambda_- \subset [0,1]^d,
\end{equation}
where $\partial \Lambda_\pm$ denotes the boundary of $\Lambda_\pm$ as a subset of the topological space $[0,1]^d$. 
We are mainly interested in estimating the domain $\Lambda_+$ which is intrinsically related to $\Gamma.$ We first propose a general estimator based on local measurements of the solution on a uniform grid of hypercubes, whose convergence properties are determined by the complexity of the boundary $\Gamma$ measured in terms of the number of hypercubes that are required to cover it.

The more structural information we are given on the set $\Lambda_+$, the better we can fine-tune the family of candidate sets underlying the estimator in order to increase the feasibility of implementation. Specifically, we will consider two different models for $\Lambda_\pm$.

\smallskip
\noindent\textbf{Model A: Graph representation}\ \\
$\Gamma$ forms a boundary fragment that has a graph representation, denoted by a change interface $\tau\colon [0,1]^{d-1}\rightarrow[0,1],$ i.e.,  
\begin{equation}\label{eq:change_graph}
\Gamma= \big\{(x,\tau(x)) : x \in [0,1]^{d-1} \big\}
\end{equation}
and the set $\Lambda_+$ takes the form
\begin{equation*}
    \Lambda_+=\{(x,y)\in[0,1]^d:y > \tau(x)\}.
\end{equation*}
Accordingly, the estimation problem of identifying $\Lambda_{\pm}$ can equivalently be broken down to the nonparametric estimation of the function $\tau$. Specifically, for an estimator $\hat{\tau}$ of $\tau$ we let $\hat{\Lambda}_+ \coloneqq \overline{\{(x,y) \in [0,1]^{d}: y > \hat{\tau}(x)\}}$ be the closure of the epigraph of $\hat{\tau}$ and  $\hat{\Lambda}_- \coloneqq [0,1]^d \setminus \hat{\Lambda}_+$ be its complement. This gives
\begin{equation}\label{est:domain}
\E\big[\bm{\lambda}\big(\hat{\Lambda}_\pm \vartriangle \Lambda_\pm\big)\big] = \E\big[\lVert \hat{\tau} - \tau \rVert_{L^1([0,1]^{d-1})} \big],
\end{equation}
such that evaluating the quality of the domain estimators $\hat{\Lambda}_{\pm}$ measured in terms of the expected Lebesgue measure of the symmetric differences $\hat{\Lambda}_{\pm} \vartriangle \Lambda_{\pm}$, is equivalent to studying the $L^1$-risk of the nonparametric estimator $\hat{\tau}$ of the change interface. 

\smallskip
\noindent\textbf{Model B: Convex set}\ \\
$\Lambda_+$ is convex. By convexity, for any $x \in\operatorname{int}\Lambda_+^0$, the vertical ray $y \mapsto x + ye_d$ intersects  $\partial \Lambda_+$ in exactly two points and those intersection points can be modeled by a lower convex function $f_1$ and an upper concave function $f_2$. Estimation of $\Lambda_+$ is then heuristically speaking equivalent to the estimation of the upper and lower function, taking the closure of $\{(x,y)\in[0,1]^{d}:\hat{f}_1(x)\leq y\leq \hat{f}_2(x)\}$ as an estimator for $\Lambda_+.$

\subsection{Characterization of the solution}
We shall first discuss properties of the operator $\Delta_\theta$ based on general theory of elliptic divergence form operators with measurable coefficients from \cite{davies90}. Let the closed quadratic form $\mathcal{E}_\theta$ with domain $\mathcal{D}(\mathcal{E}_\theta)$ be given by 
\begin{equation*}
    \begin{cases}
        \mathcal{D}(\mathcal{E_\theta})=H^1_0(\Lambda),\\
        \mathcal{E}_\theta(u,v)=\int_\Lambda \theta \nabla u \cdot \nabla v.
    \end{cases}
\end{equation*}
By \cite[Theorem 1.2.1]{davies90}, $\mathcal{E}_\theta$ is the form of a positive self-adjoint operator $-\Delta_\theta$ on $L^2(\Lambda)$ in the sense that $\mathcal{E}_\theta(u,u)=\lVert (-\Delta_\theta)^{1/2}u \rVert^2$ for $\mathcal{D}(\mathcal{E}_\theta)=\mathcal{D}((-\Delta_\theta)^{1/2})$ and according to \cite[Theorem 1.2.7]{davies90}, we have $u \in \mathcal{D}(-\Delta_\theta) \subset H^1_0(\Lambda)$ if there exists $g \in L^2(\Lambda)$ such that for any $v \in C_c^\infty(\Lambda)$,
\[\mathcal{E}_\theta(u,v) = \int_\Lambda gv,\]
in which case $-\Delta_\theta u = g$. Thus, \eqref{eq: operator} can be interpreted in a distributional sense and we have the relation
\[\mathcal{E}_\theta(u,v)=-\sc{\Delta_\theta u}{v},\quad (u,v)\in \mathcal{D}(\Delta_\theta)\times\mathcal{D}(\mathcal{E}_\theta).\]
Moreover, \cite[Theorem 1.3.5]{davies90} shows that $\mathcal{E}_\theta$ is a Dirichlet form, whence $\Delta_\theta$ generates a strongly continuous, symmetric semigroup $(S_\theta(t))_{t \geq 0} \coloneqq (\exp(\Delta_\theta t))_{t \geq 0}$. The spectrum of $-\Delta_\theta$ is discrete and the minimal eigenvalue, denoted by $\underline{\lambda}$, is strictly positive, cf.\ \cite[Theorem 6.3.1]{davies95}. Thus, $(-\Delta_\theta)^{-1}$ exists as a bounded linear operator with domain $L^2(\Lambda)$ and we may fix an orthonormal basis $\{e_k, k \in \N\}$ consisting of eigenvectors corresponding to the eigenvalues $\{\lambda_k, k \in \N\} = \sigma(-\Delta_\theta)$ that we denote in increasing order. Using the heat kernel bounds for the transition density of $S_\theta(t)$ given in \cite[Corollary 3.2.8]{davies90}, it follows that for any $t > 0$, $S_\theta(t)$  is a Hilbert--Schmidt operator,  but no weak or mild solution to \eqref{eq: SPDE} in the sense of \cite[Theorem 5.4]{da_prato_stochastic_2014} exists in $L^2((0,1)^d)$ since $d\geq 2$ implies that $\int_0^T\norm{S_\theta(t)}^2_{\mathrm{HS}}\diff t=\infty$.

However, following the discussion in \cite[Section 2.1]{altmeyer_nonparametric_2020} and \cite[Section 6.4]{altmeyer_anisotrop2021}, taking into account that by \cite[Theorem 6.3.1]{davies95} we have $\lambda_k \asymp k^{2/d}$ for any $k \in \N$, the stochastic convolution
\[X(t) \coloneqq \int_0^{t} S_\theta(t-s) \diff{W(s)}, \quad t \in [0,T],\]
is well-defined as a stochastic process on the embedding space $\mathcal{H}_1 \supset L^2((0,1)^d)$, where $\mathcal{H}_1$ can be chosen as a Sobolev space of negative order $-s < -d/2 +1$ that is induced by the eigenbasis $(\lambda_k,e_k)_{k \in \N}$. Extending the dual pairings $\langle X(t),z \rangle_{\mathcal{H}_1 \times \mathcal{H}^\prime_1}$ then allows us to obtain a Gaussian process $(\langle X(t),z \rangle)_{t \in [0,T], z \in L^2((0,1)^d)}$ given by 
\[\sc{X(t)}{z}\coloneqq\int_0^t\sc{S_{\theta}(t-s)z}{\diff W(s)}, \quad t \in [0,T],\quad z \in L^2((0,1)^d),\]
that solves the SPDE in the sense that for any  $z\in \mathcal{D}(\Delta_\theta)$,
\begin{equation}
    \label{eq: Ito}
    \sc{X(t)}{z}= \int_0^t \sc{X(s)}{\Delta_\theta z}\diff s+ \sc{W(t)}{z}, \quad t \in [0,T].
\end{equation}
\subsection{Local measurements}
We decompose $[0,1]^{d}$ into $n^{d}$ closed $d$-dimensional hypercubes $(\Sq(\alpha))_{\alpha \in [n]^{d-1}}$, where $\Sq(\alpha)$ has edge length $\delta$ and is centered at $x_\alpha \coloneqq \delta(\alpha - \tfrac{1}{2}\one)$ for any $\alpha \in [n]^{d}$. By $\Sq(\alpha)^\circ$ we denote the interior of $\Sq(\alpha)$ in $\R^d$. Let also $\mathfrak{P} = \mathfrak{P}(\delta) \coloneqq 2^{\{\Sq(\alpha): \alpha \in [n]^d\}}$ be the power set of $\{\Sq(\alpha): \alpha \in [n]^d\}$ and let $\mathcal{P} = \mathcal{P}(\delta) \coloneqq \{\bigcup_{C \in \mathfrak{C}} C: \mathfrak{C} \in \mathfrak{P}\}$ be the family of sets that can be built from taking unions of hypercubes in $\{\Sq(\alpha): \alpha \in [n]^d\}$. We refer to the hypercubes $\Sq(\alpha)$ as tiles and for any set $A \subset [0,1]^d$ we call a set $C \in \mathcal{P}$ such that $A \subset C$ a tiling of $A$.

Our estimation procedure is based on continuous-time observations of 
\[X_{\delta,\alpha}(t) = \langle X(t), K_{\delta,\alpha}\rangle,\quad X^\Delta_{\delta,\alpha}(t) = \langle X(t), \Delta K_{\delta,\alpha} \rangle,\quad \alpha\in[n]^d,\quad 0\leq t\leq T,\]
with $K_{\delta,\alpha}(y) = \delta^{-d/2}K(\delta(y - x_{\alpha}))$,  and $K\colon \R^d \to \R$ a kernel function s.t.\ $\operatorname{supp}K\subset [-1/2,1/2]^d$ and $K \in H^2(\R^d)$. To ease the notation, we assume that $K$ is normed in $L^2(\R^d)$, i.e. $\norm{K}_{L^2(\R^d)}=1.$ The measurement points $x_{\alpha}$, $\alpha\in[n]^d,$ are separated by an Euclidean distance of order $\delta$ such that the supports of the $K_{\delta,\alpha}$ are non-overlapping. In other words, we have  $N= \delta^{-d}$ measurement locations. Note that $K_{\delta,\alpha}\in\mathcal{D}(\Delta_\theta)$ whenever $\operatorname{supp}K_{\delta,\alpha}\cap\partial\Lambda_+=\varnothing,$ since then $\theta$ is constant on the support of $K_{\delta,\alpha}$. Thus, for $g = -\theta(x_{\alpha})\Delta K_{\delta,\alpha}$, integration by parts reveals $\int_\Lambda gv = \int_\Lambda \theta \nabla K_{\delta,\alpha} \cdot \nabla v$  for any $v \in C_c^\infty(\Lambda)$, i.e., $g = -\Delta_\theta K_{\delta,\alpha}$.

\section{Estimation strategy and main results}
\label{sec: estconst}
From here on, we denote the truth, i.e., the true values of the diffusivity and the true set partition of $\Lambda$, by an additional superscript $0$ for statistical purposes. To draw analogies to related work in image reconstruction \cite{mueller_cube_94,mueller_indexed_96} we allow $\Lambda^0_+$ to vary with $\delta$, but suppress the dependence notationally. To avoid some technicalities, we impose from now on the following assumption on $\Lambda^0_+$.
\begin{assumption} 
$\Lambda^0_+$ is open in $[0,1]^d$.
\end{assumption}
For some set $C \subset [0,1]^d$ define 
\[C^+ \coloneqq \bigcup_{\alpha \in [n]^d: \Sq(\alpha)^\circ \cap C \neq \varnothing} \Sq(\alpha), \]
which, if $C$ is open in $[0,1]^d$, is the minimal tiling of $C$, so that in particular $C \subset C^+\in\mathcal{P}$. Let us also set
\[\Lambda^\updownarrow_+ \coloneqq (\Lambda^0_+)^+,\]
which is the minimal tiling of $\Lambda^0_+$ by our assumption that $\Lambda^0_+$ is open in $[0,1]^d$. 
Let $\mathcal{A}_+ = \mathcal{A}_+(\delta) \subset \mathcal{P}$ be a family of candidate sets for a tiling of $\Lambda^0_+$ such that $\Lambda^\updownarrow_+ \in \mathcal{A_+}$. Note that $\mathcal{A}_+ = \mathcal{P}$ is always a valid choice. However, as we shall see later, much more parsimonious choices are possible if we can assume some structure on the set $\Lambda_+^0.$
We now introduce the modified local log-likelihood 
\begin{equation}
\label{eq: loglikelihood}
    \ell_{\delta,\alpha}(\theta_-,\theta_+,\Lambda_+) = \theta_{\delta,\alpha}(\Lambda_+) \int_0^T X^\Delta_{\delta,\alpha}(t) \diff{X_{\delta,\alpha}(t)} - \frac{\theta_{\delta,\alpha}(\Lambda_+)^2}{2} \int_0^T X^\Delta_{\delta,\alpha}(t)^2 \diff{t},
\end{equation}
for the decision rule
\begin{equation}
    \label{eq: theta}
    \theta_{\delta,\alpha}(\Lambda_+) \coloneqq \begin{cases} \theta_+, &\operatorname{Sq}(\alpha)\subset\Lambda_+,\\ \theta_-, &\text{else}, \end{cases} = \begin{cases} \theta_+, &x_\alpha \in \Lambda_+,\\ \theta_-, &\text{else}, \end{cases} 
\end{equation}
where $\theta_\pm \in \Theta_\pm$ and  the candidate sets $\Lambda_+ \in \mathcal{A}_+$ are anchored on the grid $\mathcal{P}$ spanned by the hypercubes $\Sq(\alpha)$. 
The interpretation of the stochastic integral in \eqref{eq: loglikelihood} is provided by the following result that characterizes the tested processes $X_{\delta,\alpha}$ as semimartingales whose dynamics are determined by the location of the hypercube $\Sq(\alpha)$ relative to $\Lambda^0_\pm$.
\begin{proposition}[Modification of Proposition 2.1 in \cite{reiß_2023_change}]
\label{prop: genweaksol}
For any $\alpha\in[n]^{d}$ and $t\in[0,T]$ we have 
\[X_{\delta,\alpha}(t)=\begin{cases}
    \theta_-^0\int_0^tX^\Delta_{\delta,\alpha}(s)\diff s+B_{\delta,\alpha}(t), &\operatorname{Sq}(\alpha)\subset\overline{\Lambda_-^0},\\
    \theta_+^0\int_0^tX^\Delta_{\delta,\alpha}(s)\diff s+B_{\delta,\alpha}(t), &\operatorname{Sq}(\alpha)\subset\overline{\Lambda_+^0},\\
    \int_0^t\int_0^s\sc{\Delta_{\theta^0} S_{\theta^0}(s-u)K_{\delta,\alpha}}{\diff W(u)}\diff s+B_{\delta,\alpha}(t), &\text{else},
\end{cases}\]
where $(B_{\delta,\alpha})_{\alpha\in[n]^{d}}$ is an $n^d$-dimensional vector of independent scalar Brownian motions.
\begin{proof}
The first two lines follow from \eqref{eq: Ito} using $K_{\delta,\alpha}\in\mathcal{D}(\Delta_{\theta^0})$ if $\operatorname{supp}K_{\delta,\alpha}\cap\partial\Lambda_-\cap\partial\Lambda_+=\varnothing,$ in which cases $\Delta_{\theta^0} K_{\delta,\alpha}=\theta_-^0\Delta K_{\delta,\alpha}$ and $\Delta_\theta K_{\delta,\alpha}=\theta_+^0\Delta K_{\delta,\alpha}$, respectively. The expression on the change  areas, where generally $K_{\delta,\alpha} \notin \mathcal{D}(\Delta_{\theta^0})$, is proven in complete analogy to \cite[Lemma A.1]{reiß_2023_change} using spectral calculus and the stochastic Fubini theorem. 
\end{proof}
\end{proposition}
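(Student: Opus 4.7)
The plan is to identify the driving Brownian motions explicitly and then split the analysis into three cases according to the position of $\Sq(\alpha)$ relative to the change hypersurface $\Gamma = \partial \Lambda_+^0$. First I would set $B_{\delta,\alpha}(t) \coloneqq \sc{W(t)}{K_{\delta,\alpha}}$ for every $\alpha\in[n]^d$. Because the kernels $K_{\delta,\alpha}$ have pairwise disjoint supports by the $\delta$-separation of the measurement locations and $\supp K \subset [-1/2,1/2]^d$, It\^o's isometry gives $\E[B_{\delta,\alpha}(s)B_{\delta,\beta}(t)] = (s\wedge t)\sc{K_{\delta,\alpha}}{K_{\delta,\beta}} = (s\wedge t)\one_{\{\alpha=\beta\}}$, using $\norm{K}_{L^2(\R^d)}=1$. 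Joint Gaussianity and pairwise uncorrelatedness then yield the required family of independent standard Brownian motions.

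For the interior cases $\Sq(\alpha)\subset\overline{\Lambda_\pm^0}$, the support of $K_{\delta,\alpha}$ lies in a region where $\theta^0$ is constantly equal to $\theta_\pm^0$. Integration by parts against any test function $v\in C_c^\infty(\Lambda)$ yields $\mathcal{E}_{\theta^0}(K_{\delta,\alpha},v) = -\theta_\pm^0 \sc{\Delta K_{\delta,\alpha}}{v}$, so that $K_{\delta,\alpha}\in\mathcal{D}(\Delta_{\theta^0})$ with $\Delta_{\theta^0} K_{\delta,\alpha} = \theta_\pm^0 \Delta K_{\delta,\alpha}$, as already noted at the end of Section \ref{sec: setup}. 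Plugging this identity into the weak It\^o formulation \eqref{eq: Ito} directly produces the first two lines of the proposition.

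The main obstacle is the boundary case, in which $K_{\delta,\alpha}\notin\mathcal{D}(\Delta_{\theta^0})$ in general because $\theta^0$ jumps inside $\Sq(\alpha)$, so that \eqref{eq: Ito} does not apply to $z=K_{\delta,\alpha}$. Here I would follow the strategy of \cite[Lemma A.1]{reiß_2023_change}: starting from the stochastic-convolution representation
\[ X_{\delta,\alpha}(t) = \int_0^t \sc{S_{\theta^0}(t-u)K_{\delta,\alpha}}{\diff W(u)}, \]
I would exploit the smoothing property $S_{\theta^0}(r)K_{\delta,\alpha}\in\mathcal{D}(\Delta_{\theta^0})$ for every $r>0$ together with spectral calculus in the orthonormal eigenbasis $(e_k)$ of $-\Delta_{\theta^0}$ to justify the $L^2(\Lambda)$-valued identity
\[ S_{\theta^0}(t-u)K_{\delta,\alpha} = K_{\delta,\alpha} + \int_u^t \Delta_{\theta^0} S_{\theta^0}(s-u)K_{\delta,\alpha}\,\diff s. \]
Inserting this decomposition into the convolution isolates a first summand equal to $\sc{W(t)}{K_{\delta,\alpha}} = B_{\delta,\alpha}(t)$ and a remainder to which a stochastic Fubini theorem applies, swapping $\diff s$ and $\diff W(u)$ and producing the iterated integral appearing in the third line. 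The critical checkpoint is the $L^2(\diff s\otimes\diff u)$ integrability condition required by Fubini, which I expect to secure via the Weyl-type eigenvalue asymptotics $\lambda_k\asymp k^{2/d}$ from \cite[Theorem 6.3.1]{davies95} combined with the $L^2$-normalization of $K$.
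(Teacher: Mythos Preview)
Your proposal is correct and follows essentially the same route as the paper: the interior cases via \eqref{eq: Ito} once $K_{\delta,\alpha}\in\mathcal{D}(\Delta_{\theta^0})$ is verified, and the boundary case via the semigroup identity $S_{\theta^0}(t-u)K_{\delta,\alpha}=K_{\delta,\alpha}+\int_u^t\Delta_{\theta^0}S_{\theta^0}(s-u)K_{\delta,\alpha}\,\diff s$ plus stochastic Fubini, exactly as in \cite[Lemma A.1]{reiß_2023_change}. One small correction: the Fubini integrability check does not come from Weyl asymptotics and the $L^2$-normalization of $K$ alone---the relevant quantity is $\int_0^T\lVert\Delta_{\theta^0}S_{\theta^0}(s)K_{\delta,\alpha}\rVert^2\diff s=\sum_k\tfrac{\lambda_k}{2}(1-\mathrm{e}^{-2\lambda_kT})\langle e_k,K_{\delta,\alpha}\rangle^2\leq\tfrac{1}{2}\lVert(-\Delta_{\theta^0})^{1/2}K_{\delta,\alpha}\rVert^2$, which is finite because $K_{\delta,\alpha}\in H_0^1(\Lambda)=\mathcal{D}((-\Delta_{\theta^0})^{1/2})$ (using $K\in H^2$ with compact support), not because of the eigenvalue growth rate.
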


\subsection{Estimator and convergence rates depending on boundary complexity}\label{sec:boundary complexity}

In this section we make the following identifiability assumption on the true diffusivity parameters $\theta^0_\pm$ to define a general estimator.
\begin{assumption} 
We have access to two compact sets $\Theta_{-},\Theta_+ \subset [\underline{\theta}, \overline{\theta}]$ such that
\begin{enumerate}
    \item[(i)] $\theta^0_- \in \Theta_{-}$ and $\theta^0_+\in\Theta_+,$ and
    \item[(ii)]$\Theta_-$ and $\Theta_+$ are separated by $\underline{\eta} > 0$, i.e., for any $\theta_-\in\Theta_-$ and $\theta_+\in\Theta_+$, it holds $\lvert \theta_+ - \theta_- \rvert \geq \underline{\eta}.$
\end{enumerate}
\end{assumption}
In particular, this assumption implies that we have access to a lower bound $\underline{\eta} > 0$ on the absolute diffusivity jump height $\lvert \eta^0 \rvert = \lvert \theta^0_+ - \theta^0_-\rvert$. 
Let us note that without either feeding an estimator some information on the diffusivity ranges or specific geometric constraints on $\Lambda^0_\pm$ represented by the candidate set $\mathcal{A}_+$, we cannot obtain a precise allocation of an estimator pair $(\hat{\theta}_+,\hat{\Lambda}_+)$ to one of $(\theta^0_\pm,\Lambda^0_\pm)$, but could only aim for  optimally identification of the domain split. We will come back to this aspect in the next section.

We now use the local log-likelihoods in \eqref{eq: loglikelihood} to specify specify an estimator $(\hat{\theta}_-, \hat{\theta}_+, \hat{\Lambda}_+)$ via
\begin{equation}
\label{eq: sum likelihood}
    (\hat{\theta}_-,\hat{\theta}_+, \hat{\Lambda}_+) \in \argmax_{(\theta_-,\theta_+,\Lambda_+)\in \Theta_- \times \Theta_+ \times \mathcal{A}_+} \sum_{\alpha\in [n]^{d}} \ell_{\delta,\alpha}(\theta_-,\theta_+,\Lambda_+).
\end{equation}
Here, the set of maximizers is well defined and we can make a measurable choice for a maximizer since $\Theta_\pm$ are compact and $\mathcal{A}_+$ is a finite set. Setting 
\[ \hat{\Lambda}_- \coloneqq [0,1]^d \setminus \hat{\Lambda}_+,\]
as the corresponding estimator of $\Lambda^0_-$ we obtain the nonparametric diffusivity estimator 
\[\hat{\theta}(x) \coloneqq \hat{\theta}_- \one_{\hat{\Lambda}_-}(x) + \hat{\theta}_+ \one_{\hat{\Lambda}_+}(x), \quad x \in (0,1)^{d-1}.\]
Introduce further 
\begin{equation}
\label{eq: theta0}
    \theta^0_{\delta,\alpha} \coloneqq \begin{cases} \theta^0_+, &\operatorname{Sq}(\alpha)\subset\Lambda_+^\updownarrow,\\ \theta^0_-, &\text{else},\end{cases} = \begin{cases} \theta^0_+, &x_\alpha \in \Lambda^\updownarrow_+\\ \theta^0_-, &\text{else},\end{cases}
\end{equation}
and define by 
\begin{equation*}
\begin{split}
    \mathcal{B} 
    &= \{\alpha\in[n]^d:\operatorname{Sq}(\alpha)^\circ \cap \partial\Lambda_+^0\neq\varnothing\},
\end{split}
\end{equation*}
the indices $\alpha$ of hypercubes whose interiors intersect the boundary of $\Lambda^0_+$. It is important to observe that the boundary tiles $\mathcal{B}$ may equivalently be expressed as follows.
\begin{lemma}
It holds that 
\begin{equation*}\label{lem:tiling_boundary}
\mathcal{B} = \{\alpha \in [n]^d: \Sq(\alpha)^\circ \cap (\Lambda^\updownarrow_+ \vartriangle \Lambda^0_+) \neq \varnothing\}.
\end{equation*}
\end{lemma}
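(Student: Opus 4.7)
The plan is to establish both inclusions directly, leveraging two basic facts: (i) since $\Lambda^0_+$ is open in $[0,1]^d$, the pixelated tiling $\Lambda^\updownarrow_+$ contains $\Lambda^0_+$, so the symmetric difference simplifies to $\Lambda^\updownarrow_+ \setminus \Lambda^0_+$; and (ii) the cube interiors $\{\Sq(\beta)^\circ : \beta \in [n]^d\}$ are pairwise disjoint.

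To verify (i), I would fix $x \in \Lambda^0_+$ and any cube $\Sq(\alpha) \ni x$. If $x \in \Sq(\alpha)^\circ$, then $\Sq(\alpha)^\circ \cap \Lambda^0_+ \neq \varnothing$, so $x \in \Sq(\alpha) \subset \Lambda^\updownarrow_+$ by definition. If $x$ lies only on the $\R^d$-boundary of $\Sq(\alpha)$, the openness of $\Lambda^0_+$ produces a $[0,1]^d$-neighborhood of $x$ inside $\Lambda^0_+$, which necessarily meets the interior of some adjacent cube $\Sq(\beta)$ whose closure still contains $x$; this again yields $x \in \Sq(\beta) \subset \Lambda^\updownarrow_+$.

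With (i) in hand, the two inclusions become short topological arguments. For the forward direction, given $\alpha \in \mathcal{B}$ I pick $x \in \Sq(\alpha)^\circ \cap \partial \Lambda^0_+$; openness of $\Lambda^0_+$ forces $\partial \Lambda^0_+ \cap \Lambda^0_+ = \varnothing$, so $x \notin \Lambda^0_+$, while every small $[0,1]^d$-neighborhood of $x$ inside $\Sq(\alpha)^\circ$ hits $\Lambda^0_+$, giving $\Sq(\alpha) \subset \Lambda^\updownarrow_+$ and hence $x \in \Sq(\alpha)^\circ \cap (\Lambda^\updownarrow_+ \setminus \Lambda^0_+)$. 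For the reverse direction, starting from $x \in \Sq(\alpha)^\circ \cap (\Lambda^\updownarrow_+ \setminus \Lambda^0_+)$, (ii) implies that $\Sq(\alpha)$ is the unique cube whose closure contains $x$, so $x \in \Lambda^\updownarrow_+$ forces $\Sq(\alpha) \subset \Lambda^\updownarrow_+$, which in turn produces some $y \in \Sq(\alpha)^\circ \cap \Lambda^0_+$. A connectedness argument on the open cube $\Sq(\alpha)^\circ$, which contains both the point $y$ of the open set $\Lambda^0_+$ and the point $x$ of its complement, then shows that $\Sq(\alpha)^\circ$ cannot be split by $\Lambda^0_+$ and $\operatorname{int}((\Lambda^0_+)^c)$ alone, so it must intersect $\partial \Lambda^0_+$; thus $\alpha \in \mathcal{B}$.

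The main technical nuisance will be the bookkeeping around cube faces in step (i), where a point of $\Lambda^0_+$ that sits on a shared face must be attributed to some neighboring cube entering the pixelated tiling; thereafter the remaining steps reduce to disjointness of cube interiors and the openness of $\Lambda^0_+$.
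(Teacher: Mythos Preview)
Your proof is correct and follows essentially the same approach as the paper's: reduce the symmetric difference to $\Lambda^\updownarrow_+ \setminus \Lambda^0_+$ via the containment $\Lambda^0_+ \subset \Lambda^\updownarrow_+$, then use openness of $\Lambda^0_+$ and connectedness of $\Sq(\alpha)^\circ$ for the two inclusions. The only difference is that you spell out the containment $\Lambda^0_+ \subset \Lambda^\updownarrow_+$ with an explicit face/interior case split, whereas the paper records this earlier as a general property of $C^+$ for open $C$ and simply cites it; the paper also phrases the forward inclusion as the set-level containment $\partial \Lambda^0_+ \subset \Lambda^\updownarrow_+ \setminus \Lambda^0_+$ (via $\overline{\Lambda^0_+} \subset \Lambda^\updownarrow_+$ closed) rather than your pointwise argument, but the content is identical.
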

\begin{proof} 
Since $\Lambda^0_+$ is open in $[0,1]^d$, it holds that $\Lambda^0_+ \subset \Lambda^\updownarrow_+$ and therefore $\Lambda^\updownarrow_+ \vartriangle \Lambda^0_+ = \Lambda^\updownarrow_+ \setminus \Lambda^0_+$. Since $\Lambda^\updownarrow_+$ is closed and $\partial \Lambda^0_+ \cap \Lambda^0_+ = \varnothing$, it follows that $\partial \Lambda^0_+ \subset \Lambda^\updownarrow_+ \vartriangle \Lambda^0_+$, which gives the inclusion 
\[\mathcal{B} \subset \{\alpha \in [n]^d: \Sq(\alpha)^\circ \cap (\Lambda^\updownarrow_+ \vartriangle \Lambda^0_+) \neq \varnothing\}.\]
Conversely, if $\varnothing \neq \Sq(\alpha)^\circ \cap (\Lambda^\updownarrow_+ \vartriangle \Lambda^0_+) = \Sq(\alpha)^\circ \cap (\Lambda^\updownarrow_+ \setminus\Lambda^0_+)$, it follows that $\Sq(\alpha)^\circ \not\subset \Lambda^0_+$, but $\Sq(\alpha) \subset \Lambda^\updownarrow_+$ since the latter belongs to $\mathcal{P}.$ Thus, by definition of $\Lambda^\updownarrow_+$, we have $\Sq(\alpha)^\circ \cap \Lambda^0_+ \neq \varnothing$, which because $\Sq(\alpha)^\circ$ is connected and $\Sq(\alpha)^\circ \not\subset \Lambda^0_+$ implies that $\Sq(\alpha)^\circ \cap \partial \Lambda^0_+ \neq \varnothing$. This now also yields 
\[\mathcal{B} \supset \{\alpha \in [n]^d: \Sq(\alpha)^\circ \cap (\Lambda^\updownarrow_+ \vartriangle \Lambda^0_+) \neq \varnothing\}.\]
\end{proof}
Plugging the given representation of Proposition \ref{prop: genweaksol} into \eqref{eq: loglikelihood}, we may now express $\ell_{\delta,\alpha}$ in the following way, 
\begin{align*}
    \ell_{\delta,\alpha}(\theta_-,\theta_+,\Lambda_+) &= \big(\theta_{\delta,\alpha}(\Lambda_+)\theta^0_{\delta,\alpha} - \theta_{\delta,\alpha}(\Lambda_+)^2/2 \big)I_{\delta,\alpha} + \theta_{\delta,\alpha}(\Lambda_+) M_{\delta,\alpha} \\
    &\quad+ \one_{\mathcal{B}}(\alpha) \theta_{\delta,\alpha}(\Lambda_+) R_{\delta,\alpha},
\end{align*}
where we denote
\[M_{\delta,\alpha} \coloneqq \int_0^T X^\Delta_{\delta,\alpha}(t) \diff{B_{\delta,\alpha}(t)}, \quad I_{\delta,\alpha} \coloneqq \int_0^T X^\Delta_{\delta,\alpha}(t)^2 \diff{t},\]
and 
\[R_{\delta,\alpha} \coloneqq \int_0^T X^\Delta_{\delta,\alpha}(t) \Big(\int_0^t \langle \Delta_{\theta^0} S_{\theta^0}(t-s) K_{\delta,\alpha} - \theta^0_+ S_{\theta^0}(t-s) \Delta K_{\delta,\alpha}, \diff{W}(s) \rangle\Big) \diff{t}.  \]
The estimator \eqref{eq: sum likelihood} therefore can be represented as  
\begin{align} 
&(\hat{\theta}_-,\hat{\theta}_+, \hat{\Lambda}_+)\nonumber\\
&\quad\in \argmax_{(\theta_-,\theta_+,\Lambda_+) \in \Theta_- \times \Theta_+ \times \mathcal{A}_+} \Big\{\sum_{\alpha \in [n]^{d}} \Big((\theta_{\delta,\alpha}(\Lambda_+) - \theta^0_{\delta,\alpha}) M_{\delta,\alpha} - \frac{1}{2}(\theta_{\delta,\alpha}(\Lambda_+) - \theta^0_{\delta,\alpha})^2 I_{\delta,\alpha} \Big) \nonumber\\
&\hspace{110pt} + \sum_{\alpha\in \mathcal{B}} \theta_{\delta,\alpha}(\Lambda_+) R_{\delta,\alpha} + \sum_{\alpha \in [n]^d} \Big(\theta^0_{\delta,\alpha}M_{\delta,\alpha} + \frac{(\theta^0_{\delta,\alpha})^2}{2} I_{\delta,\alpha} \Big)\Big\}\nonumber\\ 
&= \argmin_{(\theta_-,\theta_+,\Lambda_+) \in \Theta_- \times \Theta_+\times \mathcal{A}_+} \Big\{Z_{\delta}(\theta_-,\theta_+,\Lambda_+) - \sum_{\alpha\in\mathcal{B}} \theta_{\delta,\alpha}(\Lambda_+) R_{\delta,\alpha} \Big\},\label{eq: est decomp}
\end{align}
where the empirical process $Z_\delta(\cdot)$ is given by
\begin{equation}\label{eq: Z}
\begin{split}
    Z_{\delta}(\theta_-,\theta_+,\Lambda_+) &\coloneqq \sum_{\alpha \in [n]^{d}} \frac{1}{2}(\theta_{\delta,\alpha}(\Lambda_+) - \theta^0_{\delta,\alpha})^2 I_{\delta,\alpha} - \sum_{\alpha \in [n]^{d}} (\theta_{\delta,\alpha}(\Lambda_+) - \theta^0_{\delta,\alpha}) M_{\delta,\alpha}\\ 
    &= \frac{1}{2} I_{T,\delta}(\theta_-,\theta_+,\Lambda_+) - M_{T,\delta}(\theta_-,\theta_+,\Lambda_+),
\end{split}
\end{equation}
where 
\begin{align*}
I_{T,\delta}(\theta_-,\theta_+,\Lambda_+) &\coloneqq \sum_{\alpha \in [n]^{d}} (\theta_{\delta,\alpha}(\Lambda_+) - \theta^0_{\delta,\alpha})^2 I_{\delta,\alpha}, \\ 
M_{T,\delta}(\theta_-,\theta_+,\Lambda_+) &\coloneqq \sum_{\alpha \in [n]^{d}} (\theta_{\delta,\alpha}(\Lambda_+) - \theta^0_{\delta,\alpha}) M_{\delta,\alpha}.
\end{align*}
Note in particular that $Z_\delta(\theta_-^0,\theta_+^0,\Lambda^\updownarrow_+) = 0$, so that $Z_\delta(\cdot)$ is centered around the truth $(\theta_-^0,\theta^0_+,\Lambda^\updownarrow_+)$. It will be crucial to have good control on the empirical process $Z_\delta$, which is provided by the following propositions that specify the order and concentration behavior of the observed Fisher informations $I_{\delta,\alpha}$ as well as of the terms $M_{\delta,\alpha}$ via a martingale coupling result based on the Dambis--Dubins--Schwarz theorem. The proofs are straightforward extensions of the corresponding one-dimensional results \cite[Lemma 3.3, Proposition 3.4, Proposition 3.5]{reiß_2023_change} using the analogous spectral arguments in higher dimensions and can therefore be  omitted.
\begin{proposition}[Modification of Lemma 3.3 and Proposition 3.4 in \cite{reiß_2023_change}]\label{lem: orderFisher}
\hspace{1pt}
\begin{itemize}
\item [(i)] For any $\alpha \in[n]^d$ with $\alpha\notin \mathcal{B},$ it holds that
\[\E[I_{\delta,\alpha}]=\frac{T}{2\theta_{\delta,\alpha}^0}\norm{\nabla K}^2_{L^2(\R^d)}\delta^{-2}+O(1).\]
\item [(ii)]For any $\alpha\in\mathcal{B},$ it holds that
\[\E[I_{\delta,\alpha}]\in \left[\frac{2\underline{\lambda}T-1+\mathrm{e}^{-2\underline{\lambda}T}}{4\underline{\lambda}\overline{\theta}}\norm{\nabla K}^2_{L^2(\R^d)}\delta^{-2},\frac{T}{2\underline{\theta}}\norm{\nabla K}^2_{L^2(\R^d)}\delta^{-2}\right].\]
\item [(iii)] For any vector $\beta\in \R^{n^d}$ with $\beta_{\alpha}=0$ if $\alpha\in\mathcal{B}$, we have
\[ \operatorname{Var}\Big(\sum_{\alpha\in[n]^d}\beta_{\alpha}I_{\delta,\alpha}\Big)\leq \frac{T}{2\underline{\theta}^3}\delta^{-2}\norm{\beta}^2_{l^2}\norm{\nabla K}^2_{L^2(\R^d)}.\]
\item[(iv)] For any vector $\beta\in \R_+^{n^d}$ with $\beta_{\alpha}=0$ if $\alpha\in\mathcal{B}$, we have
\[\PP\Big(\big\lvert \sum_{\alpha \in [n^d]}\beta_\alpha(I_{\delta,\alpha} - \E[I_{\delta,\alpha}]) \big\rvert \geq z\Big) \leq 2\exp\Big(-\frac{\underline{\theta}^2}{2\lVert \beta \rVert_\infty} \frac{z^2}{2z + \lVert \beta \rVert_{\ell^1}T \underline{\theta}^{-1}\lVert \nabla K \rVert_{L^2(\R^d)}^2\delta^{-2}} \Big), \quad z > 0. \]
\end{itemize}
\end{proposition}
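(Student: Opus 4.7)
The plan is to derive all four parts from the unified stochastic-integral representation
\begin{equation*}
X^\Delta_{\delta,\alpha}(t) = \int_0^t \langle S_{\theta^0}(t-s)\Delta K_{\delta,\alpha}, \diff W(s)\rangle, \quad \alpha \in [n]^d,
\end{equation*}
which is valid for every $\alpha$ (even $\alpha\in\mathcal{B}$, where $K_{\delta,\alpha}\notin\mathcal{D}(\Delta_{\theta^0})$) because the Gaussian process constructed in Section \ref{sec: SPDE model} is defined on all of $L^2(\Lambda)$. Itô's isometry and Fubini then yield the common starting point
\begin{equation*}
\E[I_{\delta,\alpha}] = \int_0^T (T-u)\,\|S_{\theta^0}(u)\Delta K_{\delta,\alpha}\|_{L^2(\Lambda)}^2\,\diff u,
\end{equation*}
reducing (i)--(iv) to spectral calculus for $-\Delta_{\theta^0}$ applied to $\Delta K_{\delta,\alpha}$.

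For (i), the assumption $\alpha\notin\mathcal{B}$ localizes $\theta^0$ to the constant value $\theta^0_{\delta,\alpha}$ on $\operatorname{supp} K_{\delta,\alpha}$, so $\Delta_{\theta^0}K_{\delta,\alpha} = \theta^0_{\delta,\alpha}\Delta K_{\delta,\alpha}\in L^2(\Lambda)$. The scalar identity $\int_0^\infty \lambda^2 e^{-2\lambda u}\diff u = \lambda/2$, combined with $\mathcal{E}_{\theta^0}(K_{\delta,\alpha},K_{\delta,\alpha}) = \theta^0_{\delta,\alpha}\delta^{-2}\|\nabla K\|_{L^2(\R^d)}^2$, produces the leading-order contribution $T\int_0^\infty \|S_{\theta^0}(u)\Delta K_{\delta,\alpha}\|^2\,\diff u = T\delta^{-2}\|\nabla K\|^2/(2\theta^0_{\delta,\alpha})$. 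To handle the correction terms $\int_0^T u\,\|\cdot\|^2\,\diff u$ and $T\int_T^\infty\|\cdot\|^2\,\diff u$, I would use $\int_0^\infty u\lambda^2 e^{-2\lambda u}\diff u = 1/4$ with $\|K_{\delta,\alpha}\|=1$ for the first and the elementary bound $\lambda e^{-\lambda T}\leq (eT)^{-1}$ for the tail. The latter is essential: a crude $e^{-2\underline{\lambda}T}\delta^{-2}$ estimate would only be $O(\delta^{-2})$.

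For (ii), the identity $\Delta_{\theta^0}K_{\delta,\alpha}=\theta^0_{\delta,\alpha}\Delta K_{\delta,\alpha}$ fails on boundary tiles, but the spectral identity
\begin{equation*}
\int_0^\infty \|S_{\theta^0}(u)\Delta K_{\delta,\alpha}\|^2\,\diff u = \tfrac{1}{2}\langle \Delta K_{\delta,\alpha}, (-\Delta_{\theta^0})^{-1}\Delta K_{\delta,\alpha}\rangle
\end{equation*}
persists. Setting $u_\alpha \coloneqq (-\Delta_{\theta^0})^{-1}\Delta K_{\delta,\alpha}\in H^1_0(\Lambda)$, the defining weak identity is $\mathcal{E}_{\theta^0}(u_\alpha, v) = -\langle \nabla K_{\delta,\alpha},\nabla v\rangle$ for all $v\in H^1_0$. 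Testing with $v = u_\alpha$ and using $\|\nabla u_\alpha\|^2\leq \underline{\theta}^{-1}\mathcal{E}_{\theta^0}(u_\alpha,u_\alpha)$ gives $\mathcal{E}_{\theta^0}(u_\alpha,u_\alpha)\leq \underline{\theta}^{-1}\delta^{-2}\|\nabla K\|^2$, producing the upper bound through $\E[I_{\delta,\alpha}]\leq T\int_0^\infty\|\cdot\|^2\diff u$. For the lower bound I would expand $\|S_{\theta^0}(u)\Delta K_{\delta,\alpha}\|^2 = \sum_k c_k^2 e^{-2\lambda_k u}$ with $c_k = \langle\Delta K_{\delta,\alpha}, e_k\rangle$, compute $\int_0^T(T-u)e^{-2\lambda u}\diff u = (2\lambda T-1+e^{-2\lambda T})/(4\lambda^2)$, and verify that $h(\lambda) \coloneqq (2\lambda T - 1 + e^{-2\lambda T})/\lambda$ is strictly increasing (from $h'(\lambda) = (1-(1+2\lambda T)e^{-2\lambda T})/\lambda^2>0$). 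This bounds each summand below by $h(\underline{\lambda})/(4\lambda_k)$; summing and applying the reverse inequality $\sum_k c_k^2/\lambda_k = \mathcal{E}_{\theta^0}(u_\alpha,u_\alpha)\geq \overline{\theta}^{-1}\delta^{-2}\|\nabla K\|^2$, obtained by testing $v = K_{\delta,\alpha}$ and combining Cauchy--Schwarz in $\mathcal{E}_{\theta^0}$ with $\mathcal{E}_{\theta^0}(K_{\delta,\alpha},K_{\delta,\alpha})\leq \overline{\theta}\|\nabla K_{\delta,\alpha}\|^2$, delivers the claim.

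For (iii) and (iv), since $\beta_\alpha$ vanishes on $\mathcal{B}$, one works with quadratic forms in the centered Gaussian process $(X^\Delta_{\delta,\alpha}(t))_{t,\alpha}$. The variance bound (iii) follows from the standard formula for the variance of a Gaussian quadratic form together with the order estimate from (i), and (iv) is the associated Bernstein-type concentration bound, obtained from Hanson--Wright-type inequalities applied to $\sum_\alpha\beta_\alpha(I_{\delta,\alpha}-\E[I_{\delta,\alpha}])$. The main obstacle is (iii): the processes $X^\Delta_{\delta,\alpha}$ at different $\alpha$ are \emph{not} independent---their heat-kernel cross-correlations $\langle S_{\theta^0}(t-s)\Delta K_{\delta,\alpha}, S_{\theta^0}(t'-s)\Delta K_{\delta,\alpha'}\rangle$ do not vanish even when $\operatorname{supp} K_{\delta,\alpha}\cap \operatorname{supp} K_{\delta,\alpha'}=\varnothing$. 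Controlling them requires the Gaussian off-diagonal heat-kernel decay (cf.\ \cite[Corollary 3.2.8]{davies90}), which suppresses the cross terms sufficiently to yield the diagonal-dominated bound $\lesssim \delta^{-2}\|\beta\|_{\ell^2}^2$---the direct multivariate analogue of the spectral bookkeeping in \cite[Proposition 3.4]{reiß_2023_change}, which then ports over without essential modification.
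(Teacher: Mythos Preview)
Your proposal is correct and follows essentially the same spectral route the paper invokes: the paper omits the proof entirely, stating that it is a ``straightforward extension of the corresponding one-dimensional results \ldots\ using the analogous spectral arguments in higher dimensions,'' and your sketch supplies precisely those spectral computations (semigroup representation of $X^\Delta_{\delta,\alpha}$, eigenfunction expansion, the identity $\Delta_{\theta^0}K_{\delta,\alpha}=\theta^0_{\delta,\alpha}\Delta K_{\delta,\alpha}$ off $\mathcal{B}$, and the variational treatment of $(-\Delta_{\theta^0})^{-1}\Delta K_{\delta,\alpha}$ on $\mathcal{B}$). One minor remark: for (iii) you do not actually need pointwise heat-kernel off-diagonal decay---the cross-covariances $\operatorname{Cov}(I_{\delta,\alpha},I_{\delta,\alpha'})$ can be controlled purely spectrally via Wick's formula and the relation $\langle \Delta K_{\delta,\alpha},e_k\rangle = -(\lambda_k/\theta^0_{\delta,\alpha})\langle K_{\delta,\alpha},e_k\rangle$ for $\alpha\notin\mathcal{B}$, which is how the one-dimensional reference handles it; your heat-kernel route would also work but is slightly heavier than necessary.
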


\begin{proposition}[Modification of Proposition 3.5 in \cite{reiß_2023_change}] \label{prop:coupling}
Let 
\[\sigma_\alpha \coloneqq \inf\Big\{t \geq 0: \int_0^t X^{\Delta}_{\delta,\alpha}(s)^2 \diff{s} > \E[I_{\delta,\alpha}]\Big\}, \quad \alpha \in [n]^d,\] 
and  
\[\overline{M}_{\delta,\alpha} \coloneqq \int_0^{\sigma_\alpha} X^{\Delta}_{\delta,\alpha}(t) \diff{B_{\delta,\alpha}(t)}, \quad \alpha \in [n]^d.\] 
Then, $(\overline{M}_{\delta,\alpha})_{\alpha \in [n]^d}$ are independent with $\overline{M}_{\delta,\alpha} \sim \mathcal{N}(0,\E[I_{\delta,\alpha}])$ and  
\[\PP\Big(\big\lvert\sum_{\alpha \in [n]^d} \beta_\alpha (M_{\delta,\alpha} - \overline{M}_{\delta,\alpha}) \big\rvert \geq z, \sum_{\alpha \in [n]^d} \beta_\alpha^2 \lvert I_{\delta,\alpha} - \E[I_{\delta,\alpha}] \rvert \leq L \Big) \leq 2\exp\Big(-\frac{z^2}{L} \Big), \quad z,L > 0, \beta_\alpha \in \R, \alpha \in [n]^d.\]
\end{proposition}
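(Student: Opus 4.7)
The statement has two parts: the distributional and independence properties of $(\overline{M}_{\delta,\alpha})_{\alpha\in[n]^d}$, and the exponential concentration bound for the coupling error. Both are established via the multivariate Dambis--Dubins--Schwarz (DDS) representation combined with a Bernstein-type inequality for continuous martingales, in complete parallel to the one-dimensional proof of \cite[Proposition 3.5]{reiß_2023_change}, which generalizes verbatim because the spatial dimension $d$ enters only through the cardinality $n^d$ of the index set.

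\textbf{Law and independence of $(\overline{M}_{\delta,\alpha})$.} Extend $M_{\delta,\alpha}$ to a process by setting $M_{\delta,\alpha}(t)\coloneqq\int_0^t X^\Delta_{\delta,\alpha}(s)\diff B_{\delta,\alpha}(s)$, which is a continuous $(\mathcal{F}_t)$-local martingale with quadratic variation $\langle M_{\delta,\alpha}\rangle_t=\int_0^t X^\Delta_{\delta,\alpha}(s)^2\diff s$. Mutual independence of the driving Brownian motions $(B_{\delta,\alpha})_{\alpha\in[n]^d}$ (Proposition \ref{prop: genweaksol}) implies that all cross-variations $\langle M_{\delta,\alpha},M_{\delta,\beta}\rangle$ for $\alpha\neq\beta$ vanish. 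After extending the probability space so that $\langle M_{\delta,\alpha}\rangle_\infty = \infty$ almost surely (e.g.\ by appending independent auxiliary Brownian motions beyond $T$), Knight's theorem furnishes mutually independent standard Brownian motions $(\widetilde M_{\delta,\alpha})_{\alpha\in[n]^d}$ with $M_{\delta,\alpha}(t) = \widetilde M_{\delta,\alpha}(\langle M_{\delta,\alpha}\rangle_t)$. Since $\sigma_\alpha$ is precisely the DDS time change at the deterministic level $\E[I_{\delta,\alpha}]$, we obtain $\overline{M}_{\delta,\alpha} = \widetilde M_{\delta,\alpha}(\E[I_{\delta,\alpha}]) \sim \mathcal{N}(0,\E[I_{\delta,\alpha}])$, and the $\overline{M}_{\delta,\alpha}$ are mutually independent, proving the first claim.

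\textbf{Concentration bound.} The coupling error admits the representation
\[M_{\delta,\alpha} - \overline{M}_{\delta,\alpha} = \widetilde M_{\delta,\alpha}(I_{\delta,\alpha}) - \widetilde M_{\delta,\alpha}(\E[I_{\delta,\alpha}]) = \operatorname{sign}(T-\sigma_\alpha)\int_{T\wedge\sigma_\alpha}^{T\vee\sigma_\alpha}X^\Delta_{\delta,\alpha}(s)\diff B_{\delta,\alpha}(s),\]
i.e.\ a Brownian increment over a signed time span of length $|I_{\delta,\alpha} - \E[I_{\delta,\alpha}]|$. The main technical task is to exhibit a continuous local martingale $N$ whose terminal value equals $\sum_\alpha\beta_\alpha(M_{\delta,\alpha}-\overline{M}_{\delta,\alpha})$ and whose quadratic variation equals $\sum_\alpha\beta_\alpha^2|I_{\delta,\alpha} - \E[I_{\delta,\alpha}]|$; this calls for a careful sequential concatenation (or a reparametrization via the time-changed Brownians $\widetilde M_{\delta,\alpha}$) of the stochastic integrals over the random intervals $[T\wedge\sigma_\alpha,T\vee\sigma_\alpha]$ with the correct $\mathcal{F}_{T\vee\sigma_\alpha}$-measurable signs, and is the principal obstacle of the proof. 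Once such $N$ is constructed, stopping it at the first time its quadratic variation exceeds $L$ and applying the exponential supermartingale $\exp(\lambda N_\cdot - \tfrac{\lambda^2}{2}\langle N\rangle_\cdot)$, followed by optimization in $\lambda$ and a union bound over the two directions of the inequality, yields the claimed two-sided Bernstein-type bound $2\exp(-z^2/L)$.
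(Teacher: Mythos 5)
Your overall route---Dambis--Dubins--Schwarz/Knight for the law and independence of the $\overline{M}_{\delta,\alpha}$, then an exponential-supermartingale bound for the coupling error---is the same as the paper's, which does not spell the proof out but defers to the one-dimensional argument of \cite[Proposition 3.5]{reiß_2023_change}; the first half of your argument is fine. The problem is that you stop exactly where the proof has to be completed: you declare the construction of a continuous local martingale $N$ with terminal value $\sum_\alpha\beta_\alpha(M_{\delta,\alpha}-\overline{M}_{\delta,\alpha})$ and quadratic variation $\sum_\alpha\beta_\alpha^2\lvert I_{\delta,\alpha}-\E[I_{\delta,\alpha}]\rvert$ to be the ``principal obstacle'' and only gesture at a ``sequential concatenation with random signs''. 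There is in fact no obstacle and no concatenation is needed: since $T$ is deterministic and each $\sigma_\alpha$ is an $(\mathcal{F}_t)$-stopping time (its threshold $\E[I_{\delta,\alpha}]$ is deterministic), one may simply take
\[N_t \coloneqq \sum_{\alpha\in[n]^d}\beta_\alpha\big(M_{\delta,\alpha}(t\wedge T)-M_{\delta,\alpha}(t\wedge\sigma_\alpha)\big)=\sum_{\alpha\in[n]^d}\beta_\alpha\int_0^t\big(\one_{\{s\leq T\}}-\one_{\{s\leq\sigma_\alpha\}}\big)X^{\Delta}_{\delta,\alpha}(s)\diff B_{\delta,\alpha}(s),\]
where $M_{\delta,\alpha}(t)=\int_0^t X^\Delta_{\delta,\alpha}(s)\diff B_{\delta,\alpha}(s)$. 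This is a continuous local martingale for the original filtration, its terminal value is $\sum_\alpha\beta_\alpha(M_{\delta,\alpha}-\overline{M}_{\delta,\alpha})$, and since the $B_{\delta,\alpha}$ are independent all cross-variations vanish, so that $\langle N\rangle_\infty=\sum_\alpha\beta_\alpha^2\int_{T\wedge\sigma_\alpha}^{T\vee\sigma_\alpha}X^\Delta_{\delta,\alpha}(s)^2\diff s=\sum_\alpha\beta_\alpha^2\lvert I_{\delta,\alpha}-\E[I_{\delta,\alpha}]\rvert$. As written, your proposal proves the first claim but only conditionally sketches the concentration bound.

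A second, smaller point: the constant you announce does not follow from your own argument. The supermartingale $\exp(\lambda N_t-\tfrac{\lambda^2}{2}\langle N\rangle_t)$, optimized at $\lambda=z/L$ together with a two-sided union bound, yields $2\exp(-z^2/(2L))$, not $2\exp(-z^2/L)$; the extra factor $2$ in the exponent cannot be extracted from this (or from a reflection-principle bound on a Brownian increment over a window of length at most $L$). Note that the paper itself only ever invokes the $2\exp(-z^2/(2L))$-type bound downstream---see the denominators $32L_i$ rather than $16L_i$ in the proof of Lemma \ref{lem:order_frac}---so with the martingale $N$ above your argument does deliver the inequality in the form that is actually used, but you should not claim it gives the exponent $-z^2/L$ as stated.
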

Furthermore, convergence of the estimator $(\hat{\theta}_-,\hat{\theta}_+,\hat{\Lambda}_+)$ requires insight on the order of the remainders $R_{\delta,\alpha}$ in the representation \eqref{eq: est decomp}, given by the following lemma.
\begin{lemma} 
\label{lem: orderrest}
For any $\alpha \in \mathcal{B}$ we have 
\[\E\big[\lvert R_{\delta,\alpha} \rvert\big] \lesssim \delta^{-2}.\]
\end{lemma}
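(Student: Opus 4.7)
The plan is to apply Cauchy--Schwarz twice to decouple the remainder into a product of quantities I can control separately. Writing $R_{\delta,\alpha} = \int_0^T X^\Delta_{\delta,\alpha}(t)\, Y_{\delta,\alpha}(t)\diff{t}$ with the Gaussian process
\[Y_{\delta,\alpha}(t) \coloneqq \int_0^t \langle f_{\delta,\alpha}(t-s), \diff{W}(s)\rangle, \quad f_{\delta,\alpha}(r) \coloneqq \Delta_{\theta^0} S_{\theta^0}(r) K_{\delta,\alpha} - \theta^0_+ S_{\theta^0}(r) \Delta K_{\delta,\alpha},\]
Cauchy--Schwarz first in time and then under the expectation yields
\[\E[|R_{\delta,\alpha}|] \leq \sqrt{\E[I_{\delta,\alpha}]}\cdot \sqrt{\int_0^T \E[Y_{\delta,\alpha}(t)^2]\diff{t}}.\]
Proposition \ref{lem: orderFisher}(ii) already supplies $\sqrt{\E[I_{\delta,\alpha}]}\lesssim \delta^{-1}$, so the problem reduces to showing $\E[Y_{\delta,\alpha}(t)^2]\lesssim \delta^{-2}$ uniformly in $t\in[0,T]$.

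The crucial step is to recast $f_{\delta,\alpha}(r) = S_{\theta^0}(r)\mu_{\delta,\alpha}$ where $\mu_{\delta,\alpha} \coloneqq \Delta_{\theta^0} K_{\delta,\alpha} - \theta^0_+ \Delta K_{\delta,\alpha}$ is to be understood as an element of $H^{-1}(\Lambda)$ and $S_{\theta^0}(r)$ is extended from $L^2$ to $H^{-1}$ by spectral calculus along the eigenbasis $(\lambda_k, e_k)_{k\in\N}$ of $-\Delta_{\theta^0}$. Since $K_{\delta,\alpha}$ has compact support in $\Lambda$, integration by parts against any $v\in H^1_0(\Lambda)$ gives
\[\langle \mu_{\delta,\alpha}, v\rangle = -\int_\Lambda (\theta^0 - \theta^0_+)\nabla K_{\delta,\alpha}\cdot \nabla v \diff{x} = \eta^0 \int_{\Lambda^0_-}\nabla K_{\delta,\alpha}\cdot \nabla v \diff{x},\]
from which Cauchy--Schwarz and uniform ellipticity deliver $\lVert \mu_{\delta,\alpha} \rVert_{H^{-1}} \lesssim |\eta^0|\, \lVert \nabla K_{\delta,\alpha}\rVert_{L^2(\Lambda)} \lesssim \delta^{-1}$ in the $H^{-1}$-norm induced by the Dirichlet form $\mathcal{E}_{\theta^0}$.

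Combining the Itô isometry with the spectral representation of $\mu_{\delta,\alpha}$ and $S_{\theta^0}(r)$ then yields
\[\E[Y_{\delta,\alpha}(t)^2] = \int_0^t \lVert S_{\theta^0}(r)\mu_{\delta,\alpha}\rVert^2 \diff{r} = \sum_{k\in\N}\langle \mu_{\delta,\alpha}, e_k\rangle^2\, \frac{1-e^{-2t\lambda_k}}{2\lambda_k} \leq \tfrac{1}{2}\lVert \mu_{\delta,\alpha}\rVert_{H^{-1}}^2 \lesssim \delta^{-2}\]
uniformly in $t\in[0,T]$, which closes the argument. The principal obstacle is the rigorous distributional setup, i.e., identifying $\mu_{\delta,\alpha}$ as a genuine $H^{-1}(\Lambda)$-distribution supported near $\Gamma\cap\Sq(\alpha)^\circ$ and justifying $f_{\delta,\alpha}(r) = S_{\theta^0}(r)\mu_{\delta,\alpha}$ in $L^2(\Lambda)$ for $r>0$; once this framework is in place, the bound is a direct consequence of the heat semigroup smoothing inequality $(1-e^{-2t\lambda_k})/(2\lambda_k)\leq 1/(2\lambda_k)$.
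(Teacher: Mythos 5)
Your argument is correct and follows essentially the same route as the paper's proof: Cauchy--Schwarz twice to reduce the problem to $\E[I_{\delta,\alpha}]^{1/2}\lesssim\delta^{-1}$ times an It\^o-isometry bound on the stochastic convolution, which spectral calculus and the smoothing of $S_{\theta^0}$ collapse to a quantity of order $\lVert\nabla K_{\delta,\alpha}\rVert_{L^2}^2\asymp\delta^{-2}$. The only (cosmetic) difference is that the paper first subtracts $\theta^0_+ I_{\delta,\alpha}$, whose expectation is already known to be $O(\delta^{-2})$, and bounds the remaining term via $\int_0^T\lambda_k^2\e^{-2\lambda_k t}\diff t\le\lambda_k/2$ and the form norm $\lVert(-\Delta_{\theta^0})^{1/2}K_{\delta,\alpha}\rVert^2=\int_\Lambda\theta^0\lvert\nabla K_{\delta,\alpha}\rvert^2$, whereas you keep both terms together as $S_{\theta^0}(r)\mu_{\delta,\alpha}$ with $\mu_{\delta,\alpha}\in H^{-1}$ and use $(1-\e^{-2t\lambda_k})/(2\lambda_k)\le 1/(2\lambda_k)$; this costs a little extra care in the distributional setup (which you correctly flag) but has the minor benefit of making the jump factor $\lvert\eta^0\rvert$ visible in the constant.
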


We are now ready to prove our first main result.
\begin{theorem} \label{theo:rate_cp}
Suppose that for some $\beta \in (0,1]$ and some constant $c > 0$ it holds that 
\begin{equation}\label{ass:bound}
\lvert \mathcal{B} \rvert \leq c \delta^{-d+\beta}.
\end{equation}
Then, for some absolute constant $C$ depending only on $c,d,\underline{\theta},\overline{\theta},T$ and $\underline{\eta}$ it holds that
\[\E\big[\lebesgue(\hat{\Lambda}_+ \vartriangle \Lambda^0_+) \big] \leq {C}\delta^\beta.\] 
\end{theorem}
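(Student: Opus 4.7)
By the triangle inequality for the symmetric difference,
\[\lebesgue(\hat{\Lambda}_+ \vartriangle \Lambda^0_+) \leq \lebesgue(\hat{\Lambda}_+ \vartriangle \Lambda^\updownarrow_+) + \lebesgue(\Lambda^\updownarrow_+ \vartriangle \Lambda^0_+).\]
The preceding lemma shows $\Lambda^\updownarrow_+ \vartriangle \Lambda^0_+$ is covered by the tiles indexed by $\mathcal{B}$, so $\lebesgue(\Lambda^\updownarrow_+ \vartriangle \Lambda^0_+) \leq \lvert \mathcal{B}\rvert\delta^d \leq c\delta^\beta$ by hypothesis \eqref{ass:bound}. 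Since both $\hat{\Lambda}_+,\Lambda^\updownarrow_+ \in \mathcal{P}$, setting $\mathcal{S} \coloneqq \{\alpha \in [n]^d : \Sq(\alpha) \subset \hat{\Lambda}_+ \vartriangle \Lambda^\updownarrow_+\}$ yields $\lebesgue(\hat{\Lambda}_+ \vartriangle \Lambda^\updownarrow_+) = \delta^d \lvert \mathcal{S}\rvert$, so it suffices to show $\E[\lvert \mathcal{S}\rvert] \lesssim \delta^{-d+\beta}$.

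\textbf{Basic inequality.} By optimality of $\hat{\chi} \coloneqq (\hat{\theta}_-,\hat{\theta}_+,\hat{\Lambda}_+)$ and the identity $Z_\delta(\theta^0_-,\theta^0_+,\Lambda^\updownarrow_+) = 0$ in the representation \eqref{eq: est decomp},
\[Z_\delta(\hat{\chi}) \leq \sum_{\alpha \in \mathcal{B}} \big(\theta_{\delta,\alpha}(\hat{\Lambda}_+) - \theta^0_{\delta,\alpha}\big)R_{\delta,\alpha} \leq 2\overline{\theta}\sum_{\alpha \in \mathcal{B}}\lvert R_{\delta,\alpha}\rvert,\]
and by Lemma \ref{lem: orderrest} the right-hand side is of expected order $\lvert \mathcal{B}\rvert\delta^{-2} \lesssim \delta^{-d+\beta-2}$.

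\textbf{Matching lower bound.} The heart of the argument is a uniform lower bound asserting that with high probability, for every $\chi \in \Theta_- \times \Theta_+ \times \mathcal{A}_+$,
\[Z_\delta(\chi) \gtrsim \underline{\eta}^2 \delta^{-2} \lvert \mathcal{S}(\Lambda_+) \setminus \mathcal{B}\rvert,\]
where $\mathcal{S}(\Lambda_+) \coloneqq \{\alpha : \Sq(\alpha) \subset \Lambda_+ \vartriangle \Lambda^\updownarrow_+\}$. The quadratic part is handled via the identifiability gap: $\lvert \theta_{\delta,\alpha}(\Lambda_+) - \theta^0_{\delta,\alpha}\rvert \geq \underline{\eta}$ for $\alpha \in \mathcal{S}(\Lambda_+)$, hence $\tfrac{1}{2}I_{T,\delta}(\chi) \geq \tfrac{\underline{\eta}^2}{2}\sum_{\alpha \in \mathcal{S}(\Lambda_+)\setminus \mathcal{B}}I_{\delta,\alpha}$, which by Proposition \ref{lem: orderFisher}(i) has mean $\asymp \underline{\eta}^2 \delta^{-2}\lvert \mathcal{S}(\Lambda_+) \setminus \mathcal{B}\rvert$ and concentrates via Proposition \ref{lem: orderFisher}(iv) applied with $\beta_\alpha = \one_{\mathcal{S}(\Lambda_+)\setminus\mathcal{B}}(\alpha)$. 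For the linear part, case decomposition on the membership of $x_\alpha$ in $\Lambda_+$ versus $\Lambda^\updownarrow_+$ gives the clean representation
\[M_{T,\delta}(\chi) = (\theta_+ - \theta^0_+)T^0_+ + (\theta_- - \theta^0_-)T^0_- + (\theta_+ - \theta_-)\tilde{S}(\Lambda_+),\]
where $T^0_\pm \coloneqq \sum_{\alpha:x_\alpha \in \Lambda^\updownarrow_\pm} M_{\delta,\alpha}$ are independent of $\Lambda_+$, and $\tilde{S}(\Lambda_+) = \sum_{\alpha \in \mathcal{S}(\Lambda_+)}\pm M_{\delta,\alpha}$ is a signed sum supported only on $\mathcal{S}(\Lambda_+)$. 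Coupling $M_{\delta,\alpha}$ with Gaussians $\overline{M}_{\delta,\alpha} \sim \mathcal{N}(0,\E[I_{\delta,\alpha}])$ via Proposition \ref{prop:coupling} reduces the analysis to Gaussian suprema: $\tilde{S}(\Lambda_+)$ has variance of order $\delta^{-2}\lvert \mathcal{S}(\Lambda_+)\rvert$, so peeling over $\lvert \mathcal{S}(\Lambda_+)\rvert = k$ and a union bound over the at most $\binom{N}{k}$ corresponding $\Lambda_+$ yields $\sup \lvert \tilde{S}\rvert \lesssim k \delta^{-1}\sqrt{\log N}$, dominated by $\underline{\eta}^2 k \delta^{-2}$ since $\delta^2\log N \to 0$. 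The $\Lambda_+$-free terms $T^0_\pm$ satisfy $\lvert T^0_\pm\rvert \lesssim \delta^{-1-d/2}$ by Gaussian tails with variance $\asymp N\delta^{-2}$, which against the signal $\underline{\eta}^2 \lvert \mathcal{B}\rvert \delta^{-2} \asymp \delta^{-d+\beta-2}$ is negligible in the regime $d \geq 2, \beta \leq 1$.

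\textbf{Conclusion and main obstacle.} Evaluating the uniform lower bound at the random $\hat{\chi}$ and combining with the basic inequality, one obtains $\underline{\eta}^2 \delta^{-2}\lvert \mathcal{S}\setminus\mathcal{B}\rvert \lesssim \lvert \mathcal{B}\rvert\delta^{-2}$ with high probability; taking expectation and using $\lvert \mathcal{S}\rvert \leq \lvert \mathcal{S}\setminus\mathcal{B}\rvert + \lvert \mathcal{B}\rvert$ closes the proof in conjunction with Step~1. The main technical obstacle is precisely this uniform control of the stochastic empirical process $Z_\delta$ over the potentially exponentially large candidate family $\mathcal{A}_+$: the crucial structural input is that the $\Lambda_+$-dependent Gaussian piece $\tilde{S}(\Lambda_+)$ is localized on only $\lvert \mathcal{S}(\Lambda_+)\rvert$ coordinates, so that the entropy bound $\log\binom{N}{k} \lesssim k\log N$ can be absorbed by the linear-in-$k$ quadratic signal, and peeling over dyadic ranges of $\lvert \mathcal{S}(\Lambda_+)\rvert$ converts the resulting tail bounds into the required $L^1$-bound on $\lebesgue(\hat{\Lambda}_+ \vartriangle \Lambda^0_+)$.
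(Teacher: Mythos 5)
Your proposal is correct in substance but follows a genuinely different, and considerably heavier, route than the paper. The paper also starts from the minimal-tiling bias bound $\lebesgue(\Lambda^\updownarrow_+\vartriangle\Lambda^0_+)\leq\delta^d\lvert\mathcal{B}\rvert$ and the same basic inequality with the remainder $\sum_{\alpha\in\mathcal{B}}\lvert R_{\delta,\alpha}\rvert$, but it never lower-bounds the \emph{empirical} process uniformly: instead it lower-bounds the \emph{expected} process, $\tilde{L}_\delta(\chi)-\tilde{L}_\delta(\chi^0)\geq\underline{\eta}^2\lebesgue(\Lambda_+\vartriangle\Lambda^\updownarrow_+)$, and then controls $\E[\sup_\chi\lvert L_\delta(\chi)-\tilde{L}_\delta(\chi)\rvert]$ by the crude triangle-inequality bound $\sup_\chi\lvert M_{T,\delta}(\chi)\rvert\leq C\sum_\alpha\lvert M_{\delta,\alpha}\rvert$ (and likewise for the Fisher part), which gives order $\delta^{-d-1}$ and is already below the target $\delta^{\beta-d-2}$ precisely because $\beta\leq 1$; no coupling, peeling or entropy counting is needed, and the whole argument stays in expectation. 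Your approach --- localizing the martingale part on $\mathcal{S}(\Lambda_+)$, coupling via Proposition \ref{prop:coupling}, and absorbing the $\log\binom{N}{k}\lesssim k\log N$ entropy into the linear-in-$k$ Fisher signal --- is valid and in fact closer in spirit to what the paper does later for the tiling case (Lemma \ref{lem:order_frac}); it would be the right tool if one wanted rates finer than the geometric bias, but here it buys nothing extra. Three points in your sketch need tightening to be fully rigorous: (i) the displayed uniform bound $Z_\delta(\chi)\gtrsim\underline{\eta}^2\delta^{-2}\lvert\mathcal{S}(\Lambda_+)\setminus\mathcal{B}\rvert$ is not literally true, since the $\Lambda_+$-free terms $T^0_\pm$ of order $\delta^{-1-d/2}$ can dominate $\delta^{-2}$ when $d\geq3$ and $\lvert\mathcal{S}\setminus\mathcal{B}\rvert$ is small; you must carry them as an additive error $-C\delta^{-1-d/2}$, which, as you note, is harmless at the final $\lvert\mathcal{B}\rvert\delta^{-2}$ level; (ii) the indices of $\mathcal{S}(\Lambda_+)\cap\mathcal{B}$ appearing in $\tilde{S}(\Lambda_+)$ are not covered by Proposition \ref{lem: orderFisher}(iv) (which requires $\beta_\alpha=0$ on $\mathcal{B}$), so the coupling-error event cannot be controlled there; these terms should be split off and bounded crudely by $\sum_{\alpha\in\mathcal{B}}\lvert M_{\delta,\alpha}\rvert\lesssim\lvert\mathcal{B}\rvert\delta^{-1}$ in expectation, which is again negligible; (iii) since the theorem asserts a bound in expectation, the high-probability statement must be converted by noting that $\delta^d\lvert\mathcal{S}\rvert\leq1$ and that the exceptional probability from your exponential tail bounds can be made $\lesssim\delta^\beta$, while the $R_{\delta,\alpha}$-term is kept in expectation rather than on the good event. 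With these repairs your argument closes and yields the same constant structure as Theorem \ref{theo:rate_cp}.
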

\begin{proof} 
Let $\chi^0 \coloneqq (\theta^0_-, \theta^0_+, (\Lambda^0_+)^+) = (\theta_-^0,\theta_+^0,\Lambda^\updownarrow_+)$ and set for $\chi = (\theta_-,\theta_+,\Lambda_+) \in \Theta_- \times \Theta_+ \times \mathcal{A}_+,$
\[L_\delta(\chi) = \delta^{d+2} Z_\delta(\theta_-,\theta_+,\Lambda_+), \quad \tilde{L}_\delta(\chi) = \E[L_\delta(\chi)].\]
We first observe that  \eqref{eq: est decomp} implies  
\begin{equation}\label{eq:min_emp}
L_\delta(\hat{\theta}_-,\hat{\theta}_+, \hat{\Lambda}_+)  \leq \min_{\chi \in  \Theta_- \times \Theta_+ \times \mathcal{A}_+} L_\delta(\chi) + 2\delta^{2+d} \overline{\theta}\sum_{\alpha \in \mathcal{B}} \lvert R_{\delta,\alpha} \rvert.
\end{equation}
Furthermore, since  $L_\delta(\chi^0) = 0$ and $\E[M_{\delta,\alpha}] = 0$, we obtain with Proposition \ref{lem: orderFisher} that for any  $\chi = (\theta_-,\theta_+,\Lambda_+) \in \Theta_- \times \Theta_+ \times \mathcal{A}_+$, 
\begin{equation}\label{eq:low_exp}
\begin{split}
\tilde{L}_\delta(\chi) - \tilde{L}_\delta(\chi^0) &= \E[L_\delta(\chi)] \asymp \delta^d\sum_{\alpha\in[n]^d}(\theta_{\delta,\alpha}(\Lambda_+)-\theta^0_{\delta,\alpha})^2\\
&=\delta^d\sum_{\alpha\in[n]^d: \Sq(\alpha)^\circ \subset (\Lambda_+ \cap \Lambda^\updownarrow_+)}(\theta_+-\theta^0_+)^2+\delta^d\sum_{\alpha\in[n]^d: \Sq(\alpha)^\circ \subset (\Lambda_+ \cup \Lambda_+^\updownarrow)^{\mathrm{c}}}(\theta_--\theta^0_-)^2\\
&\quad +\delta^d\sum_{\alpha\in[n]^d: \Sq(\alpha)^\circ \subset (\Lambda_+ \vartriangle \Lambda^\updownarrow_+)}(\theta_{\delta,\alpha}(\Lambda_+) - \theta^0_{\delta,\alpha})^2\\
&\geq \underline{\eta}^2 \delta^d \big\lvert\big\{\alpha\in[n]^d: \Sq(\alpha)^\circ \subset (\Lambda_+ \vartriangle \Lambda^\updownarrow_+)\big\}\big\rvert   \\ 
&= \underline{\eta}^2 \lebesgue(\Lambda_+ \vartriangle \Lambda^\updownarrow_+),
\end{split}
\end{equation}
where for the penultimate line we observe that for $\Sq(\alpha)^\circ \subset (\Lambda_+ \vartriangle \Lambda^\updownarrow_+)$ we have $\theta_{\delta,\alpha}(\Lambda_+) = \theta_\pm$ if and only if $\theta^0_{\delta,\alpha} = \theta^0_\mp$, and conclude with $\theta_\pm, \theta^0_\pm \in \Theta_\pm$ and the fact that $\Theta_+$ and $\Theta_-$ are $\underline{\eta}$-separated. Now, using the characterization from Lemma \eqref{lem:tiling_boundary} and the assumption \eqref{ass:bound}, it follows that 
\begin{equation}\label{eq:tiling_err}
\lebesgue(\Lambda^0_+ \vartriangle \Lambda^\updownarrow_+) \leq \delta^d \lvert \mathcal{B} \rvert \leq c\delta^\beta.
\end{equation}
Combined with \eqref{eq:low_exp}, triangle inequality for the symmetric difference pseudometric therefore yields 
\[\lebesgue(\Lambda_+ \vartriangle \Lambda^0_+) \lesssim \tilde{L}_\delta(\chi) - \tilde{L}_\delta(\chi^0) + \delta^\beta, \quad \chi = (\theta_-,\theta_+,\Lambda_+) \in \Theta_- \times \Theta_+ \times \mathcal{A}_+,\]
whence the assertion follows once we have verified that 
\begin{equation} \label{eq:cons_rate1}
\E[\tilde{L}_\delta(\hat{\chi}) - \tilde{L}_\delta(\chi^0)] \lesssim \delta^\beta, \quad \hat{\chi} \coloneqq (\hat{\theta}_-,\hat{\theta}_+, \hat{\Lambda}_+),
\end{equation}
where by definition $\tilde{L}_\delta(\hat{\chi}) = \E[L_\delta(\chi)]\vert_{\chi = \hat{\chi}}$.  
Taking into account \eqref{ass:bound}, \eqref{eq:min_emp} and Lemma \ref{lem: orderrest}, we arrive at 
\begin{equation}\label{eq:basic_up}
\begin{split}
\E[\tilde{L}_\delta(\hat{\chi}) - \tilde{L}_\delta(\chi^0) ] &\leq \E\big[\tilde{L}_\delta(\hat{\chi}) - \tilde{L}_\delta(\chi^0) + L_\delta(\chi^0) - L_\delta(\hat{\chi})\big]\\
&\quad + 2\delta^{d+2} c\delta^{\beta -d} \max_{\alpha \in \mathcal{B}} \E[\lvert R_{\delta,\alpha}\rvert]\\  
&\lesssim \E\Big[\sup_{\chi \in \Theta_- \times \Theta_+ \times \mathcal{A}_+} \lvert L_\delta(\chi) - \tilde{L}_\delta(\chi) \rvert \Big]+ \delta^\beta. 
\end{split}
\end{equation}
To prove \eqref{eq:cons_rate1} it therefore remains to show
\[\E\Big[\sup_{\chi \in \Theta_- \times \Theta_+ \times \mathcal{A}_+} \lvert L_\delta(\chi) - \tilde{L}_\delta(\chi) \rvert \Big] \lesssim \delta^\beta,\]
which, recalling the decomposition \eqref{eq: Z} and using $\E[M_{T,\delta}(\chi)] = 0$, boils down to show 
\begin{align} 
\E\Big[\sup_{\chi \in \Theta_- \times \Theta_+ \times \mathcal{A}_+} \lvert I_{T,\delta}(\chi) - \E[I_{T,\delta}(\chi)]\rvert \Big] &\lesssim \delta^{\beta - d -2 }\label{eq:fisher_uni},\\ 
\E\Big[\sup_{\chi \in \Theta_- \times \Theta_+ \times \mathcal{A}_+} \lvert M_{T,\delta}(\chi)\rvert \Big] &\lesssim \delta^{\beta-d -2}.\label{eq:mart_uni}
\end{align}
Clearly, by triangle inequality, we get 
\begin{equation}\label{eq:rough}
\begin{split}
\sup_{\chi \in \Theta_- \times \Theta_+ \times \mathcal{A}_+} \lvert I_{T,\delta}(\chi) - \E[I_{T,\delta}(\chi)]\rvert &\leq (\overline{\theta} - \underline{\theta})^2 \sum_{\alpha \in [n]^d} \lvert I_{\delta,\alpha} - \E[I_{\delta,\alpha}] \rvert,\\ 
\sup_{\chi \in \Theta_- \times \Theta_+ \times \mathcal{A}_+} \lvert M_{T,\delta}(\chi) \rvert &\leq \lvert \overline{\theta} - \underline{\theta}\rvert \sum_{\alpha \in [n]^d} \lvert M_{\delta,\alpha} \rvert.
\end{split}
\end{equation}
Thus, applying once more the assumption \eqref{ass:bound} and the bounds from Proposition \ref{lem: orderFisher},
\begin{align*} 
&\E\Big[\sup_{\chi \in \Theta_- \times \Theta_+ \times \mathcal{A}_+} \lvert I_{T,\delta}(\chi) - \E[I_{T,\delta}(\chi)]\rvert\Big]\\ 
&\,\lesssim \delta^{\beta -d} \max_{\alpha \in \mathcal{B} } \E[\lvert I_{\delta,\alpha} - \E[I_{\delta,\alpha}] \rvert] + \delta^{-d} \max_{\alpha \in [n]^d \setminus \mathcal{B}} \E[\lvert I_{\delta,\alpha} - \E[I_{\delta,\alpha}] \rvert]\\ 
&\,\leq 2\delta^{\beta  -d } \max_{\alpha \in \mathcal{B}} \E[\lvert I_{\delta,\alpha}  \rvert] + \delta^{-d} \max_{\alpha \in [n]^d\setminus \mathcal{B}} \operatorname{Var}( I_{\delta,\alpha})^{1/2} \\ 
&\,\lesssim \delta^{\beta -d -2} + \delta^{-d-1},
\end{align*}
which establishes \eqref{eq:fisher_uni}. For the martingale part, using $\E[\lvert M_{\delta,\alpha} \rvert^2] = \E[I_{\delta,\alpha}]$, Cauchy--Schwarz and Proposition \ref{lem: orderFisher}, we have
\begin{align*} 
\E\Big[\sup_{\chi \in \Theta_- \times \Theta_+ \times \mathcal{A}_+} \lvert M_{T,\delta}(\chi)\rvert\Big] &\lesssim \delta^{-d} \max_{\alpha \in [n]^d} \E[\lvert M_{\delta,\alpha} \rvert]\\ 
&\leq \delta^{-d} \max_{\alpha \in [n]^d} (\E[ I_{\delta,\alpha} ])^{1/2} \\ 
&\lesssim \delta^{-d-1},
\end{align*}
showing \eqref{eq:mart_uni}. This finishes the proof. 
\end{proof} 

This result entails convergence rates for the estimation of domains $\Lambda^0_+$ with boundary of Minkowski dimension (also called \textit{box counting dimension}) $d-\beta$. Recall that if for a set $A \subset [0,1]^d$ we let $N(A,\delta)$ be the minimal number of hypercubes $\Sq(\alpha)$ needed to cover $A$ and set 
\[\underline{\operatorname{dim}}_{\mathcal{M}}(A) =  \liminf_{\delta \to 0} \frac{\log N(A,\delta)}{\log 1/\delta}, \quad \overline{\operatorname{dim}}_{\mathcal{M}}(A) = \limsup_{\delta \to 0} \frac{\log N(A,\delta)}{\log 1/\delta},\]
then, if $\underline{\operatorname{dim}}_{\mathcal{M}}(A) = \overline{\operatorname{dim}}_{\mathcal{M}}(A)$, we call 
\[\operatorname{dim}_{\mathcal{M}}(A) = \lim_{\delta \to 0} \frac{\log N(A,\delta)}{\log 1/\delta},\]
the Minkowski dimension of $A$ (see \cite[p.2]{bishop17} for the fact that this is an equivalent characterization of the Minkowski dimension in the Euclidean space $[0,1]^d$).
Clearly, 
\[\lvert \mathcal{B} \rvert \leq N(\partial \Lambda^0_+,\delta),\]
so that Theorem \ref{theo:rate_cp} yields the following corollary. 
\begin{corollary}
\label{cor: minkowski}
Suppose that for some $\beta \in (0,1]$ it holds that 
\[\operatorname{dim}_{\mathcal{M}}(\partial \Lambda^0_+) \leq d- \beta.\]
Then, for any $\varepsilon > 0$,
\[\E\big[\lebesgue(\hat{\Lambda}_+ \vartriangle \Lambda^0_+) \big] = o(\delta^{\beta - \varepsilon}).\] 
\end{corollary}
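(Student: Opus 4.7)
The plan is to reduce the claim directly to Theorem \ref{theo:rate_cp} by translating the Minkowski dimension assumption into the polynomial boundary-complexity bound \eqref{ass:bound}. The starting observation, already noted just before the statement, is that every $\alpha \in \mathcal{B}$ corresponds to a hypercube $\Sq(\alpha)$ whose (open) interior meets $\partial \Lambda^0_+$, hence $\{\Sq(\alpha) : \alpha \in \mathcal{B}\}$ is a family of $\delta$-hypercubes covering $\partial \Lambda^0_+$. This gives $\lvert \mathcal{B} \rvert \leq N(\partial \Lambda^0_+,\delta)$, reducing the problem to controlling $N(\partial \Lambda^0_+,\delta)$ via the Minkowski dimension hypothesis.

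Next, fix $\varepsilon > 0$. I would treat the two cases $\beta - \varepsilon > 0$ and $\beta - \varepsilon \leq 0$ in a unified way by choosing an auxiliary exponent $\tilde\beta$ with $\max(0,\beta-\varepsilon) < \tilde\beta < \beta$, so in particular $\tilde\beta \in (0,1]$. Since $\operatorname{dim}_{\mathcal{M}}(\partial \Lambda^0_+) \leq d-\beta < d-\tilde\beta$, the definition of the Minkowski dimension ensures that for all sufficiently small $\delta$,
\[
\frac{\log N(\partial \Lambda^0_+,\delta)}{\log(1/\delta)} < d - \tilde\beta,
\]
i.e.\ $N(\partial \Lambda^0_+,\delta) < \delta^{-d+\tilde\beta}$, and therefore $\lvert \mathcal{B} \rvert \leq \delta^{-d+\tilde\beta}$. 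This is precisely the hypothesis of Theorem \ref{theo:rate_cp} with constant $c = 1$ and exponent $\tilde\beta$.

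Applying Theorem \ref{theo:rate_cp} we obtain a constant $C = C(d,\underline{\theta},\overline{\theta},T,\underline{\eta})$ (independent of $\delta$) such that, for all $\delta$ small enough,
\[
\E\big[\lebesgue(\hat{\Lambda}_+ \vartriangle \Lambda^0_+)\big] \leq C\delta^{\tilde\beta}.
\]
Since $\tilde\beta > \beta - \varepsilon$, we have $\delta^{\tilde\beta}/\delta^{\beta-\varepsilon} = \delta^{\tilde\beta - (\beta-\varepsilon)} \to 0$ as $\delta \to 0$, proving the $o(\delta^{\beta-\varepsilon})$ claim. There is no real obstacle here: the whole content of the corollary lies in Theorem \ref{theo:rate_cp}, and what remains is merely the bookkeeping step of converting the asymptotic Minkowski bound into an effective polynomial bound on $\lvert \mathcal{B} \rvert$ with an exponent slightly worse than the optimal $\beta$, which is precisely what forces the $\varepsilon$-loss in the statement.
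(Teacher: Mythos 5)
Your proof is correct and takes essentially the same route as the paper, which simply combines the observation $\lvert\mathcal{B}\rvert \le N(\partial\Lambda^0_+,\delta)$ with Theorem \ref{theo:rate_cp} applied at a slightly smaller exponent, exactly your $\tilde\beta$-bookkeeping that produces the $\varepsilon$-loss. One small correction to the justification of that observation: $\lvert\mathcal{B}\rvert\le N(\partial\Lambda^0_+,\delta)$ holds because any cover of $\partial\Lambda^0_+$ by grid hypercubes must contain every $\Sq(\alpha)$ whose open interior meets $\partial\Lambda^0_+$ (the interiors of distinct tiles are disjoint from all other tiles), whereas your remark that $\{\Sq(\alpha):\alpha\in\mathcal{B}\}$ covers the boundary would only bound $N(\partial\Lambda^0_+,\delta)$ from above, not $\lvert\mathcal{B}\rvert$.
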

\begin{remark} 
The Minkowski dimension $\operatorname{dim}_{\mathcal{M}}$ always dominates the Hausdorff dimension $\operatorname{dim}_{\mathcal{H}}$. For many reasonable sets they coincide and in these cases the condition on  $\operatorname{dim}_{\mathcal{M}}(\partial\Lambda_+^0)$ may be replaced by the same one on $\operatorname{dim}_{\mathcal{H}}(\partial\Lambda_+^0)$. In most cases, $\dim_{\mathcal{M}}(\partial \Lambda^0_+) = d -\beta$ is verified explicitly by establishing that $N(\partial \Lambda^0_+,\delta) \asymp c\delta^{-d+\beta}$, which then improves the result to $\E\big[\lebesgue(\hat{\Lambda}_+ \vartriangle \Lambda^0_+) \big] = \mathcal{O}(\delta^{\beta})$.
\end{remark}

Given a necessary identifiability assumption,  the diffusivity parameters $\theta^0_{\pm}$ are consistently estimated by $\hat{\theta}_{\pm}$ under the assumptions of Theorem \ref{theo:rate_cp}.
\begin{corollary} \label{cor: rate}
Suppose that $\lvert \mathcal{B} \rvert \leq c\delta^{-d+\beta}$ for some $\beta \in (0,1]$ and some constant $c > 0$. If $\liminf_{\delta \to 0} \lebesgue(\Lambda_\pm^0) > 0$, then $\hat{\theta}_\pm$  is a  consistent estimator satisfying $\lvert \hat{\theta}_\pm - \theta^0_\pm \rvert = \mathcal{O}_{\PP}(\delta^{\beta/2})$. In particular, if both $\Lambda_+^0$ and $\Lambda_-^0$ have volume uniformly bounded away from 0, it holds that $\lVert \hat{\theta} - \theta^0 \rVert_{L^1((0,1)^d)} \in \mathcal{O}_{\PP}(\delta^{\beta/2}).$
\end{corollary}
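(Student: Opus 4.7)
The plan is to refine the key inequality from the proof of Theorem \ref{theo:rate_cp} so as to retain the quadratic dependence on the diffusivity deviations $\hat{\theta}_\pm - \theta^0_\pm$, which was previously discarded in favour of the $\underline{\eta}^2$-based symmetric-difference bound. With $\hat{\chi} \coloneqq (\hat{\theta}_-, \hat{\theta}_+, \hat{\Lambda}_+)$ and $\chi^0 \coloneqq (\theta^0_-, \theta^0_+, \Lambda^\updownarrow_+)$, the decomposition in \eqref{eq:low_exp} splits $\tilde{L}_\delta(\hat{\chi}) - \tilde{L}_\delta(\chi^0)$ into three non-negative summands. Keeping only the first two, namely the contributions from pixels $\alpha$ with $\Sq(\alpha)^\circ \subset \hat{\Lambda}_\pm \cap \Lambda^\updownarrow_\pm$, and using $\delta^d \cdot\#\{\alpha: \Sq(\alpha)^\circ\subset \hat{\Lambda}_\pm \cap \Lambda^\updownarrow_\pm\} = \lebesgue(\hat{\Lambda}_\pm\cap \Lambda^\updownarrow_\pm)$, this yields
\begin{equation*}
(\hat{\theta}_\pm - \theta^0_\pm)^2\, \lebesgue(\hat{\Lambda}_\pm \cap \Lambda_\pm^\updownarrow) \,\lesssim\, \tilde{L}_\delta(\hat{\chi}) - \tilde{L}_\delta(\chi^0).
\end{equation*}

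The right-hand side has already been controlled: combining \eqref{eq:basic_up} with the concentration bounds \eqref{eq:fisher_uni}--\eqref{eq:mart_uni} proved in Theorem \ref{theo:rate_cp} gives $\E[\tilde{L}_\delta(\hat{\chi}) - \tilde{L}_\delta(\chi^0)] \lesssim \delta^\beta$, hence $\tilde{L}_\delta(\hat{\chi}) - \tilde{L}_\delta(\chi^0) = \mathcal{O}_{\PP}(\delta^\beta)$ by Markov. For the area prefactor, the tiling error \eqref{eq:tiling_err} together with Theorem \ref{theo:rate_cp} and the triangle inequality for $\vartriangle$ produces $\E[\lebesgue(\hat{\Lambda}_\pm \vartriangle \Lambda^\updownarrow_\pm)] \lesssim \delta^\beta$, so that $\lebesgue(\hat{\Lambda}_\pm \vartriangle \Lambda^\updownarrow_\pm) = o_{\PP}(1)$. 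Since moreover $\lebesgue(\Lambda^\updownarrow_\pm) \geq \lebesgue(\Lambda^0_\pm) - c\delta^\beta$ and $\liminf_{\delta\to 0} \lebesgue(\Lambda^0_\pm) > 0$, I conclude that on an event $E_\delta$ with $\PP(E_\delta) \to 1$ the quantity $\lebesgue(\hat{\Lambda}_\pm\cap\Lambda^\updownarrow_\pm)$ is bounded below by a strictly positive constant independent of $\delta$.

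Combining the two previous steps, on $E_\delta$ we obtain $(\hat{\theta}_\pm - \theta^0_\pm)^2 = \mathcal{O}_{\PP}(\delta^\beta)$, while on $E_\delta^{\mathrm{c}}$ the deviation remains deterministically bounded by $\overline{\theta} - \underline{\theta}$ thanks to $\hat{\theta}_\pm \in \Theta_\pm$. A standard two-regime argument (splitting $\PP(|\hat{\theta}_\pm - \theta^0_\pm| \geq K\delta^{\beta/2})$ over $E_\delta$ and $E_\delta^{\mathrm{c}}$) then yields $|\hat{\theta}_\pm - \theta^0_\pm| = \mathcal{O}_{\PP}(\delta^{\beta/2})$. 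For the final $L^1$ assertion, I decompose $(0,1)^d$ into the four intersections $\hat{\Lambda}_\sigma \cap \Lambda^0_{\sigma'}$ for $\sigma,\sigma' \in \{+,-\}$: the two diagonal pieces contribute at most $|\hat{\theta}_\pm - \theta^0_\pm|\,\lebesgue((0,1)^d) = \mathcal{O}_{\PP}(\delta^{\beta/2})$, whereas the two off-diagonal pieces are dominated by $(\overline{\theta} - \underline{\theta})\, \lebesgue(\hat{\Lambda}_+ \vartriangle \Lambda^0_+) = \mathcal{O}_{\PP}(\delta^\beta)$ via Theorem \ref{theo:rate_cp}, giving $\|\hat{\theta} - \theta^0\|_{L^1((0,1)^d)} = \mathcal{O}_{\PP}(\delta^{\beta/2})$. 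The delicate point is the interplay between a possible degeneracy of $\lebesgue(\hat{\Lambda}_\pm \cap \Lambda^\updownarrow_\pm)$ and the only stochastic control of $\tilde{L}_\delta(\hat{\chi}) - \tilde{L}_\delta(\chi^0)$, which is the precise reason why the non-vanishing volume assumption is needed and why the two-regime event $E_\delta$ is introduced.
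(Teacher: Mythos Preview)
Your proposal is correct and follows essentially the same route as the paper's proof: both extract the quadratic term $(\hat{\theta}_\pm - \theta^0_\pm)^2 \lebesgue(\hat{\Lambda}_\pm \cap \Lambda^\updownarrow_\pm)$ from the decomposition \eqref{eq:low_exp}, invoke the expectation bound \eqref{eq:cons_rate1} to control $\tilde{L}_\delta(\hat{\chi}) - \tilde{L}_\delta(\chi^0)$, and split over the high-probability event where the area prefactor is bounded away from zero (the paper makes this explicit as $\{\lebesgue(\hat{\Lambda}_+ \vartriangle \Lambda^0_+) \leq \delta^{\beta/2}\}$). Your four-region decomposition for the $L^1$ bound is equivalent to the paper's pointwise inequality $|\hat{\theta}(x) - \theta^0(x)| \leq |\hat{\theta}_- - \theta^0_-| + |\hat{\theta}_+ - \theta^0_+| + 2\overline{\theta}\,|\one_{\hat{\Lambda}_+}(x) - \one_{\Lambda^0_+}(x)|$.
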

The proof is deferred to Section \ref{app:proofs}.
Given the results from \cite{altmeyer_anisotrop2021}, which can be applied to a stochastic heat equation with constant diffusivity, this rate is not expected to be optimal. As argued in \cite{reiß_2023_change}, a more careful approach in the design of the estimator that accounts for the irregularities of the diffusivity in a more elaborate way would be needed to also achieve rate-optimality for the diffusivity parameters. We emphasize this aspect in the next subsection, where we show that if the technical difficulties arising from hypercube intersections with the boundary $\partial \Lambda^0_+$ are eliminated by assuming that $\Lambda^0_+$ is a tiling set, we can construct diffusivity estimators that satisfy a CLT at minimax optimal rate.

\subsection{Convergence results under a tiling set assumption}\label{sec:tiling}
A close investigation of the proof of Theorem \ref{theo:rate_cp} reveals that the error rate is dominated in two ways by $\Lambda^0_+$ generally not being a tiling set: first, there is a bias of order $\delta^\beta$ when comparing $\Lambda^0_+$ to its minimal tiling $\Lambda^\updownarrow_+$; second, the total contribution of the remainder terms $R_{\delta,\alpha}$ for $\alpha \in \mathcal{B}$ is of order $\delta^\beta$ as well. For this reason, it was not necessary to carry out a finer concentration analysis of the empirical process $\mathcal{L}_\delta$, but it sufficed to exploit a rough triangle inequality bound in \eqref{eq:rough}. 

In contrast, we now proceed with a scenario that is not obscured by any geometric bias, by studying the rate of convergence of the estimator when $\Lambda^0_+$ is a tiling set, that is $\overline{\Lambda^0_+} \in \mathcal{P}$, or, equivalently, $\overline{\Lambda^0_+} = (\Lambda^0_+)^+ = \Lambda^\updownarrow_+$. Note that this implies that we must allow $\Lambda^0_+$ to be dependent on $\delta$, unless we can choose some subsequence $(\delta_k = 1/n_k)_{k \in \N}$ that yields a sequence of refining grids that anchor $\Lambda^0_+$ for $k$ large enough, say $n_k = l^k$ for some $1< l \in \N$. We emphasize that while this is clearly a mathematically idealized setting, it is akin to assumptions imposed in related works \cite{mueller_cube_94,mueller_indexed_96} on image reconstruction problems. We will work with the following set of assumptions.

\begin{assumption}\label{ass:bounds_domain}
There exists $\kappa \in (0,1/2)$ such that 
\[\kappa < \liminf_{\delta \to 0} \lebesgue(\Lambda^0_+) \leq \limsup_{\delta \to 0} \lebesgue(\Lambda^0_+) < 1 - \kappa.\]
\end{assumption}

In particular, we rule out the case of a heat equation with constant diffusivity. This is only for ease of exposition and this particular border case could be dealt with  analogous arguments.

\begin{assumption}\label{ass:candidates} 
The size of $\mathcal{A}_+$ grows at most polynomially in $\delta^{-1}$, i.e., $\lvert \mathcal{A}_+ \rvert \lesssim \delta^{-c}$ for some $c > 0$.
\end{assumption}
\begin{remark} 
In the special case where $\Lambda^0_+$ is a fixed tiling set and, accordingly, we only consider a refining grid sequence, both assumptions can be trivially satisfied (provided $\overline{\Lambda^0_+} \neq \varnothing$), by fixing $\mathcal{A}_+ = \mathcal{P}(\delta^\prime)$ for fixed $\delta^\prime$ small enough such that $\overline{\Lambda^0_+} \in \mathcal{P}(\delta^\prime)$. More generally, if for given $\delta$ we have $\overline{\Lambda^0_+} \in \mathcal{P}(\delta^\prime)$ for $1/\delta^\prime \lesssim (\log(\delta^{-1}))^{1/d}$, Assumption \ref{ass:candidates} is satisfied for the canonical choice $\mathcal{A}_+(\delta) = \mathcal{P}(\delta^\prime)$. These situations are comparable to \cite{mueller_cube_94}. As demonstrated in the next section, specific geometric assumptions on $\Lambda^0_+$ can also induce appropriate classes of candidate sets $\mathcal{A}_+$ such that the polynomial growth condition is satisfied.
\end{remark}

As a slight reinforcement to Assumption \ref{ass:bounds_domain} we also introduce the following. 

\begin{assumption}\label{ass:volume_conv}
It holds that 
\[\lim_{\delta \to 0} \lebesgue(\Lambda^0_+) = \nu^0_+ \in (0,1).\]
\end{assumption}

As we now show, we can perfectly identify $\Lambda^0_\pm$ with overwhelming probability as the resolution $\delta$ approaches zero in this setting. To this end, 
we first observe that $\overline{\Lambda^0_+} \in \mathcal{P}$ implies that $\mathcal{B} = \varnothing$. For fixed $\Lambda_+ \in \mathcal{A}_+ \setminus \{\varnothing,[0,1]^d\}$ this yields that the modified log-likelihood 
\[\R^2 \ni (\theta_-,\theta_+) \mapsto  \sum_{\alpha \in [n]^d} \ell_{\delta,\alpha}(\theta_-,\theta_+,\Lambda_+)\]
is uniquely maximized in 
\begin{equation}
\label{eq: theta_augm}
\theta_-^{\Lambda_+} \coloneqq \frac{\sum_{\Sq(\alpha)^\circ \subset \Lambda_-} \int_0^T X^{\Delta}_{\delta,\alpha}(t) \diff{X_{\delta,\alpha}(t)}}{\sum_{\Sq(\alpha)^\circ \subset \Lambda_-} I_{\delta,\alpha}}, \quad \theta_+^{\Lambda_+} \coloneqq \frac{\sum_{\Sq(\alpha)^\circ \subset \Lambda_+} \int_0^T X^{\Delta}_{\delta,\alpha}(t) \diff{X_{\delta,\alpha}(t)}}{\sum_{\Sq(\alpha)^\circ \subset \Lambda_+} I_{\delta,\alpha}},
\end{equation}
where $\Lambda_- \coloneqq [0,1]^d \setminus \Lambda_+$. 
Thus, if  we set
\[\tilde{\Lambda}_+ \in \argmax_{\Lambda_+ \in \mathcal{A}_+^\prime} \sum_{\alpha \in [n]^d} \ell_{\delta,\alpha}\big(\theta_-^{\Lambda_+},\theta_+^{\Lambda_+}, \Lambda_+ \big),\]
for $\mathcal{A}_+^\prime = \mathcal{A}_+ \setminus \{\varnothing, [0,1]^d\}$ and 
\[\big(\tilde{\theta}_-,\tilde{\theta}_+\big) = \big(\theta_-^{\tilde{\Lambda}_+}, \theta_+^{\tilde{\Lambda}_+} \big),\]
then, since $\mathcal{B}=\varnothing,$
\[(\tilde{\theta}_-,\tilde{\theta}_+,\tilde{\Lambda}_+) \in \argmax_{(\theta_-,\theta_+,\Lambda_+) \in \R^2 \times \mathcal{A}_+^\prime} \sum_{\alpha \in [n]^d} \ell_{\delta,\alpha}(\theta_-,\theta_+,\Lambda_+) = \argmin_{(\theta_-,\theta_+,\Lambda_+) \in \R^2 \times \mathcal{A}_+^\prime} Z_{\delta}(\theta_-,\theta_+,\Lambda_+).\]
Only optimizing over $\mathcal{A}^\prime_+$ instead of $\mathcal{A}_+$ is in line with Assumption \ref{ass:bounds_domain}. Although this is not strictly necessary it simplifies notation in the following.  
As discussed before, not restricting the optimization domain for the diffusivity parameters in the construction of $(\tilde{\theta}_-,\tilde{\theta}_+, \tilde{\Lambda}_+)$ can only result in a good estimator for the pairs $(\theta^0_\pm, \Lambda^0_\pm)$ if geometric constraints on $\Lambda^0_\pm$ are represented in the candidate set $\mathcal{A}_+$. In fact, to obtain a precise allocation of domains and corresponding diffusivities it is sufficient to have not only the geometric constraint $\overline{\Lambda_+^0} \in \mathcal{A}_+$, but also $\Lambda^0_- \notin \mathcal{A}_+$.  Otherwise, $\tilde{\Lambda}_+$ has the characteristics of an unsupervised learner and will therefore only aim at clustering in the sense that $\partial \tilde{\Lambda}_+$ is a good separator of the true domains $\Lambda^0_\pm$. In this case, we may exploit the consistent estimators $\hat{\Lambda}_\pm$ from before, which allowed identification by incorporating information on $\underline{\eta}$-separated domains $\Theta_\pm$ of the diffusivities $\theta^0_\pm$. More precisely, we set 
\begin{align*}
\hat{\Lambda}^\ast_+ = \begin{cases} \tilde{\Lambda}_+, &\text{if } \lebesgue(\tilde{\Lambda}_+ \cap \hat{\Lambda}_+) \geq \lebesgue(\tilde{\Lambda}_+ \cap \hat{\Lambda}_-), \\ 
\overline{[0,1]^d\setminus \tilde{\Lambda}_+}, &\text{else},
\end{cases} 
\end{align*}
and 
\begin{align*}
(\hat{\theta}^\ast_-,\hat{\theta}_+^\ast) &= \begin{cases} (\tilde{\theta}_-,\tilde{\theta}_+), &\text{if } \hat{\Lambda}^\ast_+ = \tilde{\Lambda}_+,\\ (\tilde{\theta}_+,\tilde{\theta}_-), &\text{else,}\end{cases}\\
&= \big(\theta_-^{\hat{\Lambda}_+^\ast}, \theta_+^{\hat{\Lambda}^\ast_+}\big).
\end{align*}
That is, $\tilde{\Lambda}_+$ is assigned to $\Lambda^0_\pm$ according to the size of the overlap with $\hat{\Lambda}_+$ and the diffusivity estimators are permuted accordingly. For the next theorem, let us introduce the Hausdorff distance 
\[d_H(A,B) \coloneqq \max\big\{\sup_{x \in B} d(x,A), \sup_{x \in A} d(x,B) \big\}, \quad A,B \subset \R^d,\] 
on $\R^d$.

\begin{theorem}\label{theo:tiling} 
Suppose that $\overline{\Lambda^0_+} = \Lambda^\updownarrow_+$ and that Assumptions \ref{ass:bounds_domain} and  \ref{ass:candidates} hold. Then, 
\[\lim_{\delta \to 0} \PP\Big(\tilde{\Lambda}_+ \in \big\{\overline{\Lambda^0_+}, \Lambda^0_-\big\}\Big)  = 1.\]
In particular,
\[\lim_{\delta \to 0} \PP\Big(d_H\big(\partial \tilde{\Lambda}_+, \partial \Lambda^0_+\big) = 0\Big) = 1,\]
and, if $\Lambda^0_- \notin \mathcal{A}_+$, also 
\[\lim_{\delta \to 0} \PP\Big(\tilde{\Lambda}_+ = \overline{\Lambda^0_+}\Big) = 1.\]
Moreover, 
\[\lim_{\delta \to 0} \PP\Big(\hat{\Lambda}^\ast_+ = \overline{\Lambda^0_+}\Big)  = 1.\]
\end{theorem}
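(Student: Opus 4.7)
The plan is to recast the statement as a profile M-estimation problem. Under the tiling assumption $\mathcal{B}=\varnothing$, so the quadratic map $(\theta_-,\theta_+)\mapsto Z_\delta(\theta_-,\theta_+,\Lambda_+)$ is uniquely minimised at the $(\tilde{\theta}_-^{\Lambda_+},\tilde{\theta}_+^{\Lambda_+})$ from \eqref{eq: theta_augm}, and completing the square yields
\[
\Psi_\delta(\Lambda_+)\coloneqq Z_\delta(\tilde{\theta}_-^{\Lambda_+},\tilde{\theta}_+^{\Lambda_+},\Lambda_+)=c_\omega-\sum_{s\in\{+,-\}}\frac{(S^{A_s(\Lambda_+)})^2}{2\,S_I^{A_s(\Lambda_+)}},
\]
where $A_\pm(\Lambda_+)\coloneqq\{\alpha\colon\Sq(\alpha)\subset\Lambda_\pm\}$, $S_I^A\coloneqq\sum_{\alpha\in A}I_{\delta,\alpha}$, $S^A\coloneqq\sum_{\alpha\in A}(\theta^0_{\delta,\alpha}I_{\delta,\alpha}+M_{\delta,\alpha})$, and $c_\omega$ is a random constant independent of $\Lambda_+$. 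Thus $\tilde{\Lambda}_+$ maximises $Q(\Lambda_+)\coloneqq\sum_s(S^{A_s})^2/(2S_I^{A_s})$, and the first claim reduces to showing that with probability tending to $1$ the maximum is attained on $\{\overline{\Lambda^0_+},\Lambda^0_-\}$.

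Next I would pass to the deterministic analogue $\bar{Z}_\delta\coloneqq\E[Z_\delta]$ with profile $\bar{\Psi}_\delta(\Lambda_+)\coloneqq\min_{\theta_-,\theta_+}\bar{Z}_\delta(\theta_-,\theta_+,\Lambda_+)$, attained at $\bar{\theta}_\pm^{\Lambda_+}\coloneqq\bar{S}^{A_\pm(\Lambda_+)}/\bar{S}_I^{A_\pm(\Lambda_+)}$, with $\bar{S}_I^A,\bar{S}^A$ obtained by replacing $I_{\delta,\alpha},M_{\delta,\alpha}$ by $\E[I_{\delta,\alpha}],0$. A direct weighted least-squares decomposition gives
\[
\bar{\Psi}_\delta(\Lambda_+)=(\theta^0_+-\theta^0_-)^2\bigg[\frac{\bar{S}_I^{A_+\cap A^0_+}\bar{S}_I^{A_+\cap A^0_-}}{2\,\bar{S}_I^{A_+}}+\frac{\bar{S}_I^{A_-\cap A^0_+}\bar{S}_I^{A_-\cap A^0_-}}{2\,\bar{S}_I^{A_-}}\bigg],
\]
which vanishes precisely on $\{\overline{\Lambda^0_+},\Lambda^0_-\}$ and, by Proposition~\ref{lem: orderFisher}(i), exceeds a constant multiple of $\underline{\eta}^2\delta^{-2}k_{\Lambda_+}$, where $k_{\Lambda_+}\coloneqq\min(|A_+(\Lambda_+)\vartriangle A^0_+|,|A_+(\Lambda_+)\vartriangle A^0_-|)\geq 1$ for every other $\Lambda_+\in\mathcal{A}_+'$.

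The hardest step, and the main obstacle in the whole argument, is to establish the uniform stochastic control
\[
\max_{\Lambda_+\in\mathcal{A}_+'}\big|[\Psi_\delta(\Lambda_+)-\Psi_\delta(\overline{\Lambda^0_+})]-\bar{\Psi}_\delta(\Lambda_+)\big|=o_{\PP}(\delta^{-2}).
\]
A first-order Taylor expansion of $x^2/(2y)$ around $(\bar{S}^A,\bar{S}_I^A)$ shows that the stochastic part of $Q(\Lambda_+)-\bar{Q}(\Lambda_+)$ agrees, up to a $O_{\PP}(1)$ Taylor remainder, with $-(Z_\delta-\bar{Z}_\delta)$ evaluated at the deterministic point $(\bar{\theta}_\pm^{\Lambda_+},\Lambda_+)$. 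The resulting linear combination of the centred variables $I_{\delta,\alpha}-\E[I_{\delta,\alpha}]$ and $M_{\delta,\alpha}$ has coefficients proportional to $(\bar{\theta}_s^{\Lambda_+}-\theta^0_{\delta,\alpha})^2$ and $\bar{\theta}_s^{\Lambda_+}-\theta^0_{\delta,\alpha}$, respectively, on each $\alpha\in A_s(\Lambda_+)$, both vanishing whenever the population group mean matches the local truth; an argument analogous to the separation estimate then bounds the relevant $\ell^1$- and $\ell^2$-masses by a constant multiple of $\underline{\eta}^2 k_{\Lambda_+}$ rather than by the ambient count $n^d$. Feeding this into the Bernstein inequality of Proposition~\ref{lem: orderFisher}(iv) and the Gaussian martingale coupling of Proposition~\ref{prop:coupling} produces, at deviation level $\varepsilon\bar{\Psi}_\delta(\Lambda_+)$ for any small fixed $\varepsilon>0$, a pointwise tail of order $\exp(-c\delta^{-2})$; the union bound over the $|\mathcal{A}_+|\lesssim\delta^{-c}$ candidates available under Assumption~\ref{ass:candidates} still leaves a vanishing upper bound. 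The reason a fine analysis is really needed here is that the crude triangle-inequality estimate from the proof of Theorem~\ref{theo:rate_cp} only yields fluctuations of order $\delta^{-d/2-1}$, which for $d\geq 2$ exceeds the required precision $o(\delta^{-2})$, so it is essential to exploit that both the population gap and the stochastic fluctuation are controlled by the single complexity $k_{\Lambda_+}$ rather than by $n^d$.

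Combining the previous ingredients forces $\Psi_\delta(\Lambda_+)>\Psi_\delta(\overline{\Lambda^0_+})$ simultaneously for every $\Lambda_+\notin\{\overline{\Lambda^0_+},\Lambda^0_-\}$ with probability tending to $1$, which is the first assertion. The Hausdorff claim then follows from the point-set identity $\partial\overline{\Lambda^0_+}=\partial\Lambda^0_-=\partial\Lambda^0_+$ in $[0,1]^d$, which is a direct consequence of the tiling assumption, while the case $\Lambda^0_-\notin\mathcal{A}_+$ is immediate as only $\overline{\Lambda^0_+}$ is then admissible. Finally, for $\hat{\Lambda}^{\ast}_+$, Theorem~\ref{theo:rate_cp} applied with $\beta=1$ (legitimate because $|\mathcal{B}|=0\leq\delta^{-d+1}$) gives $\lebesgue(\hat{\Lambda}_+\vartriangle\Lambda^0_+)=O_{\PP}(\delta)$, so by Assumption~\ref{ass:bounds_domain} the overlap comparison in the definition of $\hat{\Lambda}^{\ast}_+$ correctly selects $\overline{\Lambda^0_+}$ out of $\{\tilde{\Lambda}_+,\overline{[0,1]^d\setminus\tilde{\Lambda}_+}\}$ with high probability.
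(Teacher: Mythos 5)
Your proposal is correct in substance and relies on the same core ingredients as the paper (profiling out the diffusivities via \eqref{eq: theta_augm}, the concentration inputs of Proposition \ref{lem: orderFisher} and Proposition \ref{prop:coupling}, the polynomial bound on $\lvert \mathcal{A}_+\rvert$ from Assumption \ref{ass:candidates}, a combinatorial separation step using Assumption \ref{ass:bounds_domain}, and the identical endgame for $\hat{\Lambda}{}^\ast_+$ via Theorem \ref{theo:rate_cp} and the overlap rule), but it is organized differently. Where the paper expands the profiled objective explicitly as in \eqref{eq:Z_split} and then uses the uniform multiplicative controls of Lemma \ref{lem:order_frac} and Corollary \ref{coro:order_frac} to conclude that the geometric functional at the minimizer is $\mathcal{O}_{\PP}(\delta^{d+1}\log(\delta^{-1}))$, which is then played off against the single deterministic dichotomy \eqref{eq:lower}, you complete the square to obtain the profile function, lower-bound the population gap candidate-wise by a multiple of $\delta^{-2}k_{\Lambda_+}$, and prove per-candidate deviation bounds at level $\varepsilon\bar{\Psi}_\delta(\Lambda_+)$ with tails $\exp(-c\delta^{-2})$ followed by a union bound; this buys an exponentially small error probability and a cleaner algebraic identity, at the price of having to carry the self-normalization carefully. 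Two caveats: first, your displayed target $\max_{\Lambda_+\in\mathcal{A}_+^\prime}\bigl\lvert[\Psi_\delta(\Lambda_+)-\Psi_\delta(\overline{\Lambda^0_+})]-\bar{\Psi}_\delta(\Lambda_+)\bigr\rvert=o_{\PP}(\delta^{-2})$ is actually false for $d\geq 2$, since for candidates far from the truth the fluctuations are of order $\delta^{-1-d/2}$ up to logarithms, which exceeds $\delta^{-2}$; fortunately your executed argument only uses the relative (self-normalized) bound at level $\varepsilon\bar{\Psi}_\delta(\Lambda_+)$, which is what is needed, so you should state the uniform control in that form. Second, the claims that the coefficient masses and the population gap are both controlled by $k_{\Lambda_+}$ are correct but deserve the short case analysis (the analogue of the paper's \eqref{eq:lower}, using $\kappa$ from Assumption \ref{ass:bounds_domain}), and the Taylor remainder is $\mathcal{O}_{\PP}(\log(\delta^{-1}))$ rather than $\mathcal{O}_{\PP}(1)$ after the union bound, which is still negligible against $\delta^{-2}$.
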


    This domain identification result is much stronger than findings in the associated image reconstruction literature. Indeed, the main result of \cite{mueller_cube_94} states the convergence rate $\sqrt{\log(\delta^{-d})\delta^{d}}$ for the volume of the symmetric difference $\lebesgue(\tilde{\Lambda}_+\vartriangle \Lambda_+^0),$ given the additional identifiability criterion
    \[\lebesgue(\Lambda_+\cap\Lambda_+^0)\geq\kappa,\quad \lebesgue(\Lambda_-\cap \Lambda_-^0)\geq\kappa,\quad \forall\Lambda_+\in\mathcal{A}_+.\]
    Under such extra constraint, the proof of Theorem \ref{theo:tiling} reveals the convergence rate
    \[\lebesgue(\tilde{\Lambda}_+\vartriangle\Lambda_+^0)=O_{\PP}(\delta^{d+1}\ln(\delta^{-1})).\]
    Consequently, we can identify $\overline{\Lambda_+^0}$ with overwhelming probability as even the slightest deviation of $\overline{\Lambda_+^0}$, that is a single wrongly assigned hypercube $\Sq(\alpha),$ leads to an error larger than $\delta^d.$ It is therefore of no surprise that we can also identify $\theta_\pm^0$ with better rates leading to the following CLT for the diffusivity estimators $(\hat{\theta}^\ast_-, \hat{\theta}^\ast_+)$ at rate $\delta^{d/2+1},$ which is the minimax optimal rate for diffusivity estimation without change points found in \cite{altmeyer_anisotrop2021}.

This domain identification result also allows us to prove a CLT for the diffusivity estimators $(\hat{\theta}^\ast_-, \hat{\theta}^\ast_+)$ at rate $\delta^{-(d/2+1)},$ which is the minimax optimal rate for diffusivity estimation without change points found in \cite{altmeyer_anisotrop2021}. 

\begin{theorem}\label{theo:clt}
Suppose  that $\overline{\Lambda^0_+} = \Lambda^\updownarrow_+$ and that Assumptions \ref{ass:bounds_domain} Assumption \ref{ass:volume_conv} hold. Then, 
\[\delta^{-(d/2+1)}\big(\hat{\theta}^\ast_\pm - \theta^0_\pm) \overset{d}{\longrightarrow} \mathcal{N}\left(0, \frac{2\theta^0_\pm}{T\lVert \nabla K\rVert^2_{L^2}v^0_\pm} \right),\]
where $\nu^0_- = 1 - \nu^0_+$. 
If $\Lambda^0_- \notin \mathcal{A}_+$, then also
\[\delta^{-(d/2+1)}\big(\tilde{\theta}_\pm - \theta^0_\pm) \overset{d}{\longrightarrow} \mathcal{N}\left(0, \frac{2\theta^0_\pm}{T\lVert \nabla K\rVert^2_{L^2}v^0_\pm} \right).\]
\end{theorem}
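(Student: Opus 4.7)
The strategy is to reduce the problem to a high-probability event on which the estimator has an explicit closed form and then apply a ratio-of-random-variables CLT. By Theorem \ref{theo:tiling} the event $A_\delta \coloneqq \{\hat{\Lambda}^\ast_+ = \overline{\Lambda^0_+}\}$ satisfies $\PP(A_\delta) \to 1$; since distributional limits are insensitive to modifications on sequences of null-probability sets, it suffices to work on $A_\delta$. Writing $\mathcal{I}_+ \coloneqq \{\alpha \in [n]^d : \Sq(\alpha)\subset \overline{\Lambda^0_+}\}$, the tiling assumption $\overline{\Lambda^0_+} = \Lambda^\updownarrow_+$ forces $\mathcal{B} = \varnothing$, so that Proposition \ref{prop: genweaksol} delivers the clean semimartingale representation $\diff X_{\delta,\alpha}(t) = \theta^0_+ X^\Delta_{\delta,\alpha}(t)\diff t + \diff B_{\delta,\alpha}(t)$ for every $\alpha \in \mathcal{I}_+$. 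Inserting this into the closed form \eqref{eq: theta_augm} produces on $A_\delta$
\[
\hat{\theta}^\ast_+ - \theta^0_+ = \frac{\sum_{\alpha \in \mathcal{I}_+} M_{\delta,\alpha}}{\sum_{\alpha \in \mathcal{I}_+} I_{\delta,\alpha}},
\]
and an identical derivation with $\mathcal{I}_- \coloneqq [n]^d \setminus \mathcal{I}_+$ and $\theta^0_-$ replacing $\theta^0_+$ handles $\hat{\theta}^\ast_-$.

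\textbf{Denominator concentration.} Proposition \ref{lem: orderFisher}(i) gives $\E[I_{\delta,\alpha}] = \tfrac{T}{2\theta^0_+}\lVert \nabla K\rVert^2_{L^2(\R^d)}\delta^{-2}+O(1)$ uniformly for $\alpha \in \mathcal{I}_+$, and combined with Assumption \ref{ass:volume_conv} and $\delta^d\lvert \mathcal{I}_+\rvert = \lebesgue(\overline{\Lambda^0_+})$ this yields $\delta^{d+2}\sum_{\mathcal{I}_+}\E[I_{\delta,\alpha}] \to \tfrac{T\nu^0_+}{2\theta^0_+}\lVert \nabla K\rVert^2_{L^2(\R^d)}$. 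Proposition \ref{lem: orderFisher}(iii) with $\beta = \one_{\mathcal{I}_+}$ bounds the variance of the unnormalized sum by $O(\delta^{-d-2})$, which is negligible against the squared mean of order $\delta^{-2(d+2)}$. Chebyshev's inequality therefore delivers $\delta^{d+2}\sum_{\mathcal{I}_+} I_{\delta,\alpha} \overset{\PP}{\longrightarrow} \tfrac{T\nu^0_+}{2\theta^0_+}\lVert \nabla K\rVert^2_{L^2(\R^d)}$.

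\textbf{Numerator via coupling.} The independent Gaussians $\overline{M}_{\delta,\alpha} \sim \mathcal{N}(0,\E[I_{\delta,\alpha}])$ from Proposition \ref{prop:coupling} satisfy $\sum_{\mathcal{I}_+}\overline{M}_{\delta,\alpha} \sim \mathcal{N}(0,\sum_{\mathcal{I}_+}\E[I_{\delta,\alpha}])$ \emph{exactly}, whence $\delta^{(d+2)/2}\sum_{\mathcal{I}_+}\overline{M}_{\delta,\alpha} \overset{d}{\longrightarrow} \mathcal{N}(0,\tfrac{T\nu^0_+}{2\theta^0_+}\lVert \nabla K\rVert^2_{L^2(\R^d)})$ by the denominator analysis. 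To transfer this to $\sum_{\mathcal{I}_+}M_{\delta,\alpha}$, note that Proposition \ref{lem: orderFisher}(iii) applied with single-coordinate $\beta$ yields $\E[\lvert I_{\delta,\alpha}-\E[I_{\delta,\alpha}]\rvert] \leq \operatorname{Var}(I_{\delta,\alpha})^{1/2} = O(\delta^{-1})$, so that Markov gives $\sum_{\mathcal{I}_+}\lvert I_{\delta,\alpha}-\E[I_{\delta,\alpha}]\rvert = O_{\PP}(\delta^{-d-1})$. Applying the coupling inequality with $z = \varepsilon\delta^{-(d+2)/2}$ and $L = C\delta^{-d-1}$ produces a tail bound of order $\exp(-\varepsilon^2\delta^{-1}/C) + o(1)$, since $z^2/L = \varepsilon^2\delta^{-1}/C \to \infty$. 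Thus $\delta^{(d+2)/2}\sum_{\mathcal{I}_+}(M_{\delta,\alpha} - \overline{M}_{\delta,\alpha}) \overset{\PP}{\longrightarrow} 0$.

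\textbf{Conclusion.} Slutsky's lemma combines the previous two steps into the CLT for $\hat{\theta}^\ast_+$, and the symmetric argument with $\nu^0_- = 1-\nu^0_+$ gives it for $\hat{\theta}^\ast_-$. The secondary assertion follows from Theorem \ref{theo:tiling}: under the additional assumption $\Lambda^0_- \notin \mathcal{A}_+$, $\PP(\tilde{\Lambda}_+ = \overline{\Lambda^0_+}) \to 1$, so that $\tilde{\theta}_\pm$ coincides with $\hat{\theta}^\ast_\pm$ on an event of asymptotic full probability and inherits the same limit. The main technical obstacle is the coupling argument: the Dambis--Dubins--Schwarz-based inequality of Proposition \ref{prop:coupling} is only useful once the data-dependent threshold $L$ is shown to be of strictly smaller order than the squared target scale $z^2$, which rests on the sharp quantitative estimates of Proposition \ref{lem: orderFisher} and the cancellation of the boundary contributions enabled by $\mathcal{B} = \varnothing$.
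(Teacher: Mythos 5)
Your proposal is correct and follows the same architecture as the paper's proof: reduce to the event $\{\hat{\Lambda}^\ast_+ = \overline{\Lambda^0_+}\}$ supplied by Theorem \ref{theo:tiling}, use the exact decomposition $\theta_\pm^{\overline{\Lambda^0_+}} = \theta^0_\pm + \sum M_{\delta,\alpha}/\sum I_{\delta,\alpha}$ (valid since $\mathcal{B}=\varnothing$), prove a law of large numbers for the normalized Fisher information via Proposition \ref{lem: orderFisher}, establish asymptotic normality of the numerator, and conclude by Slutsky together with the vanishing contribution of the complementary event. The one place you genuinely diverge is the numerator: the paper simply invokes a standard martingale CLT combined with $\sum I_{\delta,\alpha}/\E[\sum I_{\delta,\alpha}]\overset{\PP}{\to}1$, whereas you obtain asymptotic normality from the Dambis--Dubins--Schwarz coupling of Proposition \ref{prop:coupling}: the coupled sum $\sum\overline{M}_{\delta,\alpha}$ is exactly Gaussian with variance $\sum\E[I_{\delta,\alpha}]$, and your choices $z=\varepsilon\delta^{-(d+2)/2}$, $L\asymp\delta^{-d-1}$ (justified by the single-coordinate variance bound and Markov) make the coupling error $o_{\PP}(\delta^{-(d+2)/2})$ since $z^2/L\asymp\delta^{-1}\to\infty$. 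This is slightly longer but entirely self-contained within the tools stated in the paper, while the paper's route is shorter at the cost of appealing to an external martingale CLT; both yield the same limiting variance $2\theta^0_\pm/(T\lVert\nabla K\rVert^2_{L^2}\nu^0_\pm)$. Two cosmetic points: the phrase ``insensitive to modifications on sequences of null-probability sets'' should read ``on events of vanishing probability'' (the paper makes this precise with the indicator-plus-Slutsky argument you can copy verbatim), and for the secondary claim it is cleanest to note that on $\{\tilde{\Lambda}_+=\overline{\Lambda^0_+}\}$ one has $\tilde{\theta}_\pm=\theta_\pm^{\overline{\Lambda^0_+}}$, so the already established CLT for $\theta_\pm^{\overline{\Lambda^0_+}}$ transfers directly.
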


To prepare the proofs of these theorems, we make some important preliminary observations. Fix some $\Lambda_+ \in \mathcal{A}_+^\prime$. Using Proposition \ref{prop: genweaksol} to decompose the domain-specific estimators defined in \eqref{eq: theta_augm}, the empirical process $Z_{\delta}(\theta_-^{\Lambda_+},\theta_+^{\Lambda_+},\Lambda_+)$ admits the representation
\begin{equation}\label{eq:Z_split}
\begin{split}
    Z_{\delta}(\theta_-^{\Lambda_+},\theta_+^{\Lambda_+},\Lambda_+)&=\frac{\eta^2}{2}\frac{\sum_{\Sq(\alpha)^\circ \subset\Lambda_+\cap\Lambda_+^0}I_{\delta,\alpha}\sum_{\Sq(\alpha)^\circ \subset\Lambda_+\setminus\Lambda_+^0}I_{\delta,\alpha}}{\sum_{\Sq(\alpha)^\circ \subset\Lambda_+}I_{\delta,\alpha}}+\frac{\eta^2}{2}\frac{\sum_{\Sq(\alpha)^\circ \subset\Lambda_-\cap\Lambda_-^0}I_{\delta,\alpha}\sum_{\Sq(\alpha)^\circ \subset\Lambda_+^0\setminus\Lambda_+}I_{\delta,\alpha}}{\sum_{\Sq(\alpha)^\circ \subset\Lambda_-}I_{\delta,\alpha}}\\
    &\phantom{=}-\frac{(\sum_{\Sq(\alpha)^\circ \subset\Lambda_+}M_{\delta,\alpha})^2}{2\sum_{\Sq(\alpha)^\circ \subset\Lambda_+}I_{\delta,\alpha}}-\frac{(\sum_{\Sq(\alpha)^\circ \subset\Lambda_-}M_{\delta,\alpha})^2}{2\sum_{\Sq(\alpha)^\circ \subset\Lambda_-}I_{\delta,\alpha}}\\
    &\phantom{=}-\frac{\eta}{\sum_{\Sq(\alpha)^\circ \subset\Lambda_+}I_{\delta,\alpha}}\left(\sum_{\Sq(\alpha)^\circ \subset\Lambda_+\cap\Lambda_-^0}M_{\delta,\alpha}\sum_{\Sq(\alpha)^\circ \subset\Lambda_+\cap \Lambda_+^0}I_{\delta,\alpha}-\sum_{\Sq(\alpha)^\circ \subset\Lambda_+\cap\Lambda_+^0}M_{\delta,\alpha}\sum_{\Sq(\alpha)^\circ \subset\Lambda_+\cap\Lambda_-^0}I_{\delta,\alpha}\right)\\
    &\phantom{=} -\frac{\eta}{\sum_{\Sq(\alpha)^\circ \subset\Lambda_-}I_{\delta,\alpha}}\left(\sum_{\Sq(\alpha)^\circ \subset\Lambda_-\cap\Lambda_+^0}I_{\delta,\alpha}\sum_{\Sq(\alpha)^\circ \subset\Lambda_-\cap\Lambda_-^0}M_{\delta,\alpha}-\sum_{\Sq(\alpha)^\circ \subset\Lambda_-\cap\Lambda_-^0}I_{\delta,\alpha}\sum_{\Sq(\alpha)^\circ \subset\Lambda_-\cap\Lambda_+^0}M_{\delta,\alpha}\right),
\end{split}
\end{equation}
which is proven in Section \ref{app:proofs}.
Note here  that 
\[Z_{\delta}(\theta_-^{\Lambda^0_-}, \theta_+^{\Lambda^0_-}, \Lambda^0_-) = Z_{\delta}(\theta_-^{\overline{\Lambda^0_+}}, \theta_+^{\overline{\Lambda^0_+}}, \Lambda^0_+), \]
so that the estimator $\tilde{\Lambda}_+$, which optimizes over $\mathcal{A}_+^\prime$, cannot distinguish between $\Lambda^0_+$ and $\Lambda^0_-$ whenever $\Lambda^0_- \in \mathcal{A}_+$. This is why in this case, $\tilde{\Lambda}_+$ can only identify the domain split but not $\Lambda^0_+$ directly, as discussed before. 

To control the above terms we establish the following technical results.
\begin{lemma} \label{lem:order_frac}
The following hold: 
\begin{enumerate} 
\item[(i)]
\[\sup_{\Lambda_+\in \mathcal{A}_+^\prime} \frac{\sum_{\Sq(\alpha)^\circ \subset \Lambda_\pm} M_{\delta,\alpha}}{\sum_{\Sq(\alpha)^\circ \subset \Lambda_\pm} I_{\delta,\alpha}} = \mathcal{O}_{\PP}\Big(\sqrt{\delta\log(\delta^{-1})}\Big).
\]
\item[(ii)] \[\sup_{\Lambda_+ \in \mathcal{A}_+^\prime: \Lambda_\pm \cap \Lambda^0_\pm \neq \varnothing} \frac{\sum_{\Sq(\alpha) \subset \Lambda_\pm \cap \Lambda^0_\pm} M_{\delta,\alpha}}{\sum_{\Sq(\alpha) \subset \Lambda_\pm \cap \Lambda^0_\pm} I_{\delta,\alpha}} = \mathcal{O}_{\PP}\Big(\sqrt{\delta\log(\delta^{-1})}\Big)\]
and also 
\[\sup_{\Lambda_+ \in \mathcal{A}_+^\prime: \Lambda_\pm \cap \Lambda^0_\mp \neq \varnothing} \frac{\sum_{\Sq(\alpha)^\circ \subset \Lambda_\pm \cap \Lambda^0_\mp} M_{\delta,\alpha}}{\sum_{\Sq(\alpha)^\circ \subset \Lambda_\pm \cap \Lambda^0_\mp} I_{\delta,\alpha}} = \mathcal{O}_{\PP}\Big(\sqrt{\delta\log(\delta^{-1})}\Big).\]
\item[(iii)] \[\sup_{\Lambda_+ \in \mathcal{A}_+^\prime} \frac{(\sum_{\Sq(\alpha)^\circ \subset \Lambda_\pm} M_{\delta,\alpha})^2}{\sum_{\Sq(\alpha)^\circ \subset \Lambda_\pm} I_{\delta,\alpha}} = \mathcal{O}_{\PP}\Big(\delta^{-1}\log(\delta^{-1})\Big).\]
\item[(iv)] \[\sup_{\Lambda_+ \in \mathcal{A}_+^\prime} \frac{\sum_{\Sq(\alpha)^\circ \subset \Lambda_\pm}\E[I_{\delta,\alpha}]}{\sum_{\Sq(\alpha)^\circ \subset \Lambda_\pm} I_{\delta,\alpha}} = 1 + \mathcal{O}_{\PP}\Big(\delta \sqrt{\log(\delta^{-1})} \Big).\]
\item[(v)] \[\sup_{\Lambda_+ \in \mathcal{A}_+^\prime} \frac{\sum_{\Sq(\alpha)^\circ \subset \Lambda_\pm}I_{\delta,\alpha}}{\sum_{\Sq(\alpha)^\circ \subset \Lambda_\pm} \E[I_{\delta,\alpha}]} = 1 + \mathcal{O}_{\PP}\Big(\delta \sqrt{\log(\delta^{-1}}) \Big).\]
and also 
\begin{align*}
\sup_{\Lambda_+ \in \mathcal{A}_+^\prime: \Lambda_\pm \cap \Lambda^0_{\pm} \neq \varnothing} \frac{\sum_{\Sq(\alpha)^\circ \subset \Lambda_\pm \cap \Lambda^0_\pm}I_{\delta,\alpha}}{\sum_{\Sq(\alpha)^\circ \subset \Lambda_\pm \cap \Lambda^0_\pm} \E[I_{\delta,\alpha}]} &= 1 + \mathcal{O}_{\PP}\Big(\delta \sqrt{\log(\delta^{-1}}) \Big), \\ 
\sup_{\Lambda_+ \in \mathcal{A}_+^\prime: \Lambda_\pm \cap \Lambda^0_{\mp} \neq \varnothing} \frac{\sum_{\Sq(\alpha)^\circ \subset \Lambda_\pm \cap \Lambda^0_\mp}I_{\delta,\alpha}}{\sum_{\Sq(\alpha)^\circ \subset \Lambda_\pm \cap \Lambda^0_\mp} \E[I_{\delta,\alpha}]} &= 1 + \mathcal{O}_{\PP}\Big(\delta \sqrt{\log(\delta^{-1}}) \Big)
\end{align*}
\end{enumerate}
\end{lemma}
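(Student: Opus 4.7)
The plan is to prove all five items via a common strategy that exploits the fact that, under the tiling assumption $\overline{\Lambda^0_+} = \Lambda^\updownarrow_+$, we have $\mathcal{B} = \varnothing$, so that Proposition~\ref{lem: orderFisher}(i) yields $\E[I_{\delta,\alpha}] \asymp \delta^{-2}$ for every $\alpha \in [n]^d$. The three main ingredients are: the Bernstein-type deviation bound of Proposition~\ref{lem: orderFisher}(iv) to concentrate the denominator sums $\sum I_{\delta,\alpha}$ around their expectations; the martingale coupling of Proposition~\ref{prop:coupling} to reduce the numerator sums $\sum M_{\delta,\alpha}$ to independent Gaussian surrogates $\overline{M}_{\delta,\alpha}$; and Assumption~\ref{ass:candidates}, which ensures that a union bound over $\mathcal{A}_+^\prime$ costs only a factor of $\log(\delta^{-1})$ inside each exponential tail.

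I would begin with (iv) and (v), as they control the denominators appearing throughout. Fix $\Lambda_+ \in \mathcal{A}_+^\prime$, set $k_\pm \coloneqq \lvert\{\alpha : \Sq(\alpha)^\circ \subset \Lambda_\pm\}\rvert \geq 1$ (the lower bound holding since $\mathcal{A}_+^\prime$ excludes $\varnothing$ and $[0,1]^d$), and apply Proposition~\ref{lem: orderFisher}(iv) with $\beta_\alpha = \one\{\Sq(\alpha)^\circ \subset \Lambda_\pm\}$, so $\lVert \beta \rVert_\infty = 1$ and $\lVert \beta \rVert_{\ell^1} = k_\pm$. With $z = \varepsilon k_\pm \delta^{-2}$, the Bernstein exponent reduces in the sub-Gaussian regime $\varepsilon \lesssim 1$ to order $\varepsilon^2 k_\pm \delta^{-2}$; choosing $\varepsilon \asymp \delta \sqrt{\log(\delta^{-1})}$ therefore forces this to dominate $\log\lvert\mathcal{A}_+^\prime\rvert \asymp \log(\delta^{-1})$, and a union bound over $\mathcal{A}_+^\prime$ yields (iv). Statement (v) then follows by inverting $1 + O_{\PP}(\delta\sqrt{\log(\delta^{-1})})$. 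The restricted variants in (v) are identical, since under the tiling assumption $\Sq(\alpha)^\circ \subset \Lambda^0_\pm$ is equivalent to $\Sq(\alpha) \subset \overline{\Lambda^0_\pm}$, so that $\Lambda_\pm \cap \Lambda^0_\pm$ and $\Lambda_\pm \cap \Lambda^0_\mp$ are themselves unions of full hypercubes and contain at least one hypercube whenever nonempty.

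For the numerator items (i)--(iii), I would apply Proposition~\ref{prop:coupling} with the same indicator weights. On the high-probability event from (iv), the required control $L \coloneqq \sum_\alpha \beta_\alpha^2 \lvert I_{\delta,\alpha} - \E[I_{\delta,\alpha}] \rvert \leq \sum I_{\delta,\alpha} + \sum \E[I_{\delta,\alpha}] \lesssim k_\pm \delta^{-2}$ holds by triangle inequality, so the coupling yields $\PP(\lvert\sum \beta_\alpha (M_{\delta,\alpha} - \overline{M}_{\delta,\alpha})\rvert \geq z) \leq 2e^{-z^2/L}$, while the Gaussian $\sum \beta_\alpha \overline{M}_{\delta,\alpha} \sim \mathcal{N}(0, \sum_\alpha \beta_\alpha \E[I_{\delta,\alpha}])$ has variance $\asymp k_\pm \delta^{-2}$. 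Taking $z \asymp \sqrt{k_\pm \delta^{-2} \log(\delta^{-1})}$ and union-bounding over $\mathcal{A}_+^\prime$ gives $\lvert \sum_{\Sq(\alpha)^\circ \subset \Lambda_\pm} M_{\delta,\alpha} \rvert \lesssim \sqrt{k_\pm \delta^{-2} \log(\delta^{-1})}$ uniformly. Dividing by $\sum I_{\delta,\alpha} \asymp k_\pm \delta^{-2}$ from (v) and using $k_\pm \geq 1$ yields (i) and (iii) simultaneously. For (ii) the same argument applies with $\beta_\alpha$ further restricted to $\one\{\Sq(\alpha)^\circ \subset \Lambda_\pm \cap \Lambda^0_\pm\}$ or $\one\{\Sq(\alpha)^\circ \subset \Lambda_\pm \cap \Lambda^0_\mp\}$, invoking the same tiling identification as in (v) to guarantee at least one contributing hypercube whenever the relevant intersection is nonempty.

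The main technical obstacle will be the worst-case scale $k_\pm = 1$: since a single hypercube is an admissible element of $\mathcal{A}_+^\prime$, the Bernstein exponent is only of sub-Gaussian strength linear in $k_\pm$, so the $\log(\delta^{-1})$ cost of the union bound from Assumption~\ref{ass:candidates} dictates the rate $\delta\sqrt{\log(\delta^{-1})}$ (which is stronger than the stated $\sqrt{\delta \log(\delta^{-1})}$) and prevents anything faster uniformly. Beyond this, the only care needed is the consistent bookkeeping of the $\ell^1$, $\ell^2$, and $\ell^\infty$ norms of the indicator weights across Proposition~\ref{lem: orderFisher}(iv) and Proposition~\ref{prop:coupling}, combined with the crude control of $L$ on the high-probability event from (iv), which together make all the relevant exponential tails summable after the union bound.
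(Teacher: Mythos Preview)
Your approach is correct and in fact somewhat cleaner than the paper's. The two proofs share the same ingredients (Proposition~\ref{lem: orderFisher}(iv), Proposition~\ref{prop:coupling}, and Assumption~\ref{ass:candidates}), but organize them differently. The paper buckets $\mathcal{A}_+^\prime$ into classes $\mathcal{A}_+^i = \{\Lambda_+ : \delta^{-(i-1)} \leq k(\Lambda_+) \leq \delta^{-i}\}$, $i=1,\ldots,d$, and within each class normalizes by the smallest $k$ while bounding the Gaussian variance by the largest; this width-$\delta^{-1}$ mismatch is what produces the rate $\sqrt{\delta\log(\delta^{-1})}$ in (i) and $\delta^{-1}\log(\delta^{-1})$ in (iii). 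You instead apply concentration per $\Lambda_+$ with a $k_\pm$-dependent threshold $z \asymp \sqrt{k_\pm\delta^{-2}\log(\delta^{-1})}$ and then union-bound, which avoids the bucketing loss and indeed delivers the sharper $\delta\sqrt{\log(\delta^{-1})}$ and $\log(\delta^{-1})$ you note. A second difference is the control of $L = \sum_{\alpha}\beta_\alpha^2\lvert I_{\delta,\alpha}-\E[I_{\delta,\alpha}]\rvert$: the paper bounds this via $k\cdot\sup_{\alpha}\lvert I_{\delta,\alpha}-\E[I_{\delta,\alpha}]\rvert \lesssim k\delta^{-1}\sqrt{\log(\delta^{-1})}$ using individual Fisher concentration and a union bound over all $\alpha\in[n]^d$, whereas your crude triangle-inequality bound $L \leq \sum I_{\delta,\alpha}+\sum\E[I_{\delta,\alpha}] \lesssim k_\pm\delta^{-2}$ is weaker by a factor $\delta^{-1}/\sqrt{\log(\delta^{-1})}$ but still makes the coupling exponent $z^2/L \asymp \log(\delta^{-1})$ summable. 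So your argument is simpler (no bucketing, no individual concentration) and sharper; the paper's route is slightly more modular in isolating $\sup_\alpha\lvert I_{\delta,\alpha}-\E[I_{\delta,\alpha}]\rvert$ as a reusable estimate, at the cost of the looser final rate.
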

The proof, which is based on the concentration results Proposition \ref{lem: orderFisher} and Proposition \ref{prop:coupling}, is deferred to Section \ref{app:proofs}. This result yields the following corollary, which is also proved in Section \ref{app:proofs}.

\begin{corollary}\label{coro:order_frac}
It holds that 
\[\frac{\delta^{d+2}}{\sum_{\Sq(\alpha)^\circ \subset \tilde{\Lambda}_+} I_{\delta,\alpha}}\sum_{\Sq(\alpha)^\circ \subset \tilde{\Lambda}_+ \cap \Lambda^0_\mp} I_{\delta,\alpha}\sum_{\Sq(\alpha) \subset \tilde{\Lambda}_+ \cap \Lambda^0_\pm} M_{\delta,\alpha}  \asymp \frac{\lebesgue(\tilde{\Lambda}_+ \setminus \Lambda^0_+)\lebesgue(\tilde{\Lambda}_+ \cap \Lambda^0_+)}{\lebesgue(\tilde{\Lambda}_+)}\mathcal{O}_{\PP}\Big(\sqrt{\delta \log(\delta^{-1})}\Big), \]
and
\[\frac{\delta^{d+2}}{\sum_{\Sq(\alpha)^\circ \subset \tilde{\Lambda}_-} I_{\delta,\alpha}}\sum_{\Sq(\alpha)^\circ \subset \tilde{\Lambda}_- \cap \Lambda^0_\mp} I_{\delta,\alpha}\sum_{\Sq(\alpha) \subset \tilde{\Lambda}_- \cap \Lambda^0_\pm} M_{\delta,\alpha}  \asymp \frac{\lebesgue(\Lambda^0_+\setminus \tilde{\Lambda}_+ )\lebesgue(\tilde{\Lambda}_- \cap \Lambda^0_-)}{\lebesgue(\tilde{\Lambda}_-)}\mathcal{O}_{\PP}\Big(\sqrt{\delta \log(\delta^{-1})}\Big), \]
as well as 
\[\delta^{d+2} \frac{\sum_{\Sq(\alpha)^\circ \subset \tilde{\Lambda}_+ \setminus \Lambda^0_+} I_{\delta,\alpha}\sum_{\Sq(\alpha)^\circ  \in \tilde{\Lambda}_+ \cap \Lambda^0_+} I_{\delta,\alpha}}{\sum_{\Sq(\alpha)^\circ \subset \tilde{\Lambda}_+} I_{\delta,\alpha}} \asymp \frac{\lebesgue(\tilde{\Lambda}_+ \setminus \Lambda^0_+)\lebesgue(\tilde{\Lambda}_+ \cap \Lambda^0_+)}{\lebesgue(\tilde{\Lambda}_+)}\Big(1 + \mathcal{O}_{\PP}\big(\delta \sqrt{\log(\delta^{-1})}\big)\Big), \]
and 
\[\delta^{d+2} \frac{\sum_{\Sq(\alpha)^\circ \subset \tilde{\Lambda}_- \cap \Lambda^0_+} I_{\delta,\alpha}\sum_{\Sq(\alpha)^\circ  \in \tilde{\Lambda}_- \cap \Lambda^0_-} I_{\delta,\alpha}}{\sum_{\Sq(\alpha)^\circ \subset \tilde{\Lambda}_-} I_{\delta,\alpha}} \asymp \frac{\lebesgue( \Lambda^0_+ \setminus \tilde{\Lambda}_+)\lebesgue(\tilde{\Lambda}_- \cap \Lambda^0_-)}{\lebesgue(\tilde{\Lambda}_-)}\Big(1 + \mathcal{O}_{\PP}\big(\delta \sqrt{\log(\delta^{-1})}\big)\Big). \]
\end{corollary}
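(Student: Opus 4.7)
The starting observation is that $\overline{\Lambda^0_+} = \Lambda^\updownarrow_+$ forces $\mathcal{B} = \varnothing$, whence Proposition \ref{lem: orderFisher}(i) gives $\E[I_{\delta,\alpha}] = \frac{T}{2\theta^0_{\delta,\alpha}}\lVert\nabla K\rVert_{L^2(\R^d)}^2\delta^{-2} + O(1) \asymp \delta^{-2}$ uniformly in $\alpha\in[n]^d$. Since every $\Lambda_+ \in \mathcal{A}_+^\prime$ is a tiling set and $\Lambda^0_\pm$ are tiling sets too, the cardinality of $\{\alpha\in[n]^d:\Sq(\alpha)^\circ\subset\Lambda_+\cap A\}$ is exactly $\delta^{-d}\lebesgue(\Lambda_+\cap A)$ for $A\in\{\Lambda^0_+,\Lambda^0_-\}$. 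Combining yields the deterministic equivalence
\[\sum_{\Sq(\alpha)^\circ\subset\Lambda_+\cap A}\E[I_{\delta,\alpha}]\asymp\delta^{-d-2}\,\lebesgue(\Lambda_+\cap A),\]
uniformly over $\Lambda_+\in\mathcal{A}_+^\prime$, and the analogous bound with $\Lambda_\pm$ in place of $\Lambda_+\cap A$.

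For the third identity, I would invoke Lemma \ref{lem:order_frac}(v) together with its intersected variants to replace each $I$-sum appearing in the ratio by the corresponding expectation, up to a multiplicative factor $1+\mathcal{O}_\PP(\delta\sqrt{\log(\delta^{-1})})$. Because these bounds are uniform over $\mathcal{A}_+^\prime$, they remain valid at the random set $\tilde{\Lambda}_+\in\mathcal{A}_+^\prime$. Plugging in the volume asymptotic from the first paragraph and cancelling powers of $\delta$ produces exactly the volume-ratio right-hand side times $1+\mathcal{O}_\PP(\delta\sqrt{\log(\delta^{-1})})$. The fourth identity follows identically, using $\tilde{\Lambda}_-\in\mathcal{P}$ and the relations $\tilde{\Lambda}_-\cap\Lambda^0_+ = \Lambda^0_+\setminus\tilde{\Lambda}_+$ and $\tilde{\Lambda}_-\cap\Lambda^0_- = \Lambda^0_-\cap\tilde{\Lambda}_-$.

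For the first (and analogously the second) identity, I would additionally invoke Lemma \ref{lem:order_frac}(ii), also uniform over $\mathcal{A}_+^\prime$, to factor
\[\sum_{\Sq(\alpha)\subset\tilde{\Lambda}_+\cap\Lambda^0_\pm}M_{\delta,\alpha} = \mathcal{O}_\PP\bigl(\sqrt{\delta\log(\delta^{-1})}\bigr)\sum_{\Sq(\alpha)\subset\tilde{\Lambda}_+\cap\Lambda^0_\pm}I_{\delta,\alpha},\]
with the vacuous interpretation when the intersection is empty. The left-hand side of the first identity then factorizes as $\mathcal{O}_\PP(\sqrt{\delta\log(\delta^{-1})})$ times precisely the $I$-only ratio already handled, giving the claim. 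The main subtlety throughout is that $\tilde{\Lambda}_+$ is random and selected by an empirical optimization, so a pointwise-in-$\Lambda_+$ substitution would not suffice; the genuine probabilistic content is therefore absorbed into the uniformity of Lemma \ref{lem:order_frac}, whose proof combines Assumption \ref{ass:candidates} (polynomial cardinality of $\mathcal{A}_+$) with the Bernstein-type and martingale-coupling concentration bounds of Proposition \ref{lem: orderFisher}(iv) and Proposition \ref{prop:coupling} via a union bound. Granted those uniform bounds, the corollary reduces to deterministic bookkeeping based on the tiling-set cardinality formula.
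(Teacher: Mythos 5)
Your proposal is correct and follows essentially the same route as the paper's proof: both factor the quantities into a deterministic ratio of expected Fisher informations, which by Proposition \ref{lem: orderFisher} is of the order of the stated volume ratio, multiplied by random correction factors (the $M/I$ ratio and the $I/\E[I]$ ratios) that are controlled uniformly over $\mathcal{A}_+^\prime$ via Lemma \ref{lem:order_frac}(ii), (iv), (v), this uniformity being precisely what absorbs the randomness of $\tilde{\Lambda}_+$. The only difference is the inessential order in which the factors are extracted.
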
 
With these technical preparations we can now prove Theorem \ref{theo:tiling}.
\begin{proof}[Proof of Theorem \ref{theo:tiling}]
By Lemma \ref{lem:order_frac} and Corollary \ref{coro:order_frac}, it follows from \eqref{eq:Z_split} that 
\begin{align*}
\delta^{d+2} Z_\delta\big(\tilde{\theta}_-,\tilde{\theta}_+, \tilde{\Lambda}_+\big) &= \delta^{d+2} Z_\delta\big(\theta_-^{\tilde{\Lambda}_+},\theta_+^{\tilde{\Lambda}_+}, \tilde{\Lambda}_+\big)\\
&\asymp \Big(\frac{\lebesgue(\tilde{\Lambda}_+\setminus\Lambda_+^0)\lebesgue(\tilde{\Lambda}_+\cap\Lambda_+^0)}{\lebesgue(\tilde{\Lambda}_+)}+\frac{\lebesgue(\Lambda_+^0\setminus\tilde{\Lambda}_+)\lebesgue(\tilde{\Lambda}_-\cap \Lambda_-^0)}{\lebesgue(\tilde{\Lambda}_-)}\Big)(1+ o_{\PP}(1))\\
&\quad + \mathcal{O}_{\PP}\Big(\delta^{d+1}\log(\delta^{-1})\Big).
\end{align*}
Since 
\[(\tilde{\theta}_-,\tilde{\theta}_+,\tilde{\Lambda}_+) \in \argmin_{(\theta_-,\theta_+,\Lambda_+) \in \R^2 \times \mathcal{A}_+^\prime} Z_{\delta}(\theta_-,\theta_+,\Lambda_+),\]
and, again by Lemma \ref{lem:order_frac},
\begin{align*}
0 > \delta^{d+2}Z_\delta(\theta^{\overline{\Lambda^0_+}}_-,\theta^{\overline{\Lambda^0_+}}_+,\Lambda^0_+) &= -\delta^{d+2}\frac{(\sum_{\Sq(\alpha) \subset \Lambda_+^0}M_{\delta,\alpha})^2}{2\sum_{\Sq(\alpha)^\circ \subset\Lambda_+^0}I_{\delta,\alpha}}-\delta^{d+2}\frac{(\sum_{\Sq(\alpha)^\circ \subset \Lambda_-^0}M_{\delta,\alpha})^2}{2\sum_{\Sq(\alpha)^\circ \subset \Lambda_-^0}I_{\delta,\alpha}}\\
&=\mathcal{O}_{\PP}\Big(\delta^{d+1}\log(\delta^{-1})\Big),
\end{align*}
where $\Lambda^0_+ \in \mathcal{A}_+^\prime$ for $\delta$ small enough by Assumption \ref{ass:bounds_domain},  we conclude that 
\begin{equation}\label{eq:rate1}
\frac{\lebesgue(\tilde{\Lambda}_+\setminus\Lambda_+^0)\lebesgue(\tilde{\Lambda}_+\cap\Lambda_+^0)}{\lebesgue(\tilde{\Lambda}_+)}+\frac{\lebesgue(\Lambda_+^0\setminus\tilde{\Lambda}_+)\lebesgue(\tilde{\Lambda}_-\cap \Lambda_-^0)}{\lebesgue(\tilde{\Lambda}_-)}  = \mathcal{O}_{\PP}\Big(\delta^{d+1}\log(\delta^{-1})\Big).
\end{equation}
Some simple calculations show that 
\begin{equation}\label{eq:lower}
\tilde{\Lambda}_+ \notin \{\overline{\Lambda^0_+}, \Lambda^0_-\} \implies \frac{\lebesgue(\tilde{\Lambda}_+\setminus\Lambda_+^0)\lebesgue(\tilde{\Lambda}_+\cap\Lambda_+^0)}{\lebesgue(\tilde{\Lambda}_+)}+\frac{\lebesgue(\Lambda_+^0\setminus\tilde{\Lambda}_+)\lebesgue(\tilde{\Lambda}_-\cap \Lambda_-^0)}{\lebesgue(\tilde{\Lambda}_-)} \geq \delta^d \kappa/2,
\end{equation}
for $\delta$ small enough, see Section \ref{app:proofs} for the full argument. Thus, using \eqref{eq:rate1}, it follows that 
\begin{align*}
1 &= \lim_{\delta \to 0} \PP\left(\frac{\lebesgue(\tilde{\Lambda}_+\setminus\Lambda_+^0)\lebesgue(\tilde{\Lambda}_+\cap\Lambda_+^0)}{\lebesgue(\tilde{\Lambda}_+)}+\frac{\lebesgue(\Lambda_+^0\setminus\tilde{\Lambda}_+)\lebesgue(\tilde{\Lambda}_-\cap \Lambda_-^0)}{\lebesgue(\tilde{\Lambda}_-)} \leq \delta^{d+1} (\log(\delta^{-1}))^2 \right)\\
&= \lim_{\delta \to 0} \PP\Big(\tilde{\Lambda}_+ \in \big\{ \overline{\Lambda^0_+},  \Lambda^0_- \big\}\Big). 
\end{align*}
Clearly, this is equivalent to 
\[\lim_{\delta \to 0} \PP\big(d_H(\partial \Lambda^0_+, \partial \tilde{\Lambda}_+) = 0\big) = 1.\]
Moreover, if $\Lambda^0_- \notin \mathcal{A}_+$, it follows from $\tilde{\Lambda}_+, \overline{\Lambda^0_+} \in \mathcal{A}_+$ that 
\[\PP\Big(\tilde{\Lambda}_+ = \overline{\Lambda^0_+}\Big) = \PP\Big(\tilde{\Lambda}_+ \in \big\{ \overline{ \Lambda^0_+},  \Lambda^0_- \big\}\Big) \underset{\delta \to 0}{\longrightarrow} 1. \]
Finally, let us treat the estimator $\hat{\Lambda}{}^\ast_+$. Note first that 
\begin{align*} 
\lebesgue(\Lambda^0_\pm \cap \hat{\Lambda}_\pm) \geq \lebesgue(\Lambda^0_\pm \cap \hat{\Lambda}_\mp) &\iff \lebesgue(\Lambda^0_\pm \cap \hat{\Lambda}_\mp) \leq \lebesgue(\Lambda^0_\pm)/2.
\end{align*}
Thus, for $\delta$ small enough,
\begin{align*} 
\PP\big(\hat{\Lambda}{}^\ast_+ = \overline{\Lambda^0_+}\big) &= \PP\big(\tilde{\Lambda}_+ = \overline{\Lambda^0_+}, \lebesgue(\tilde{\Lambda}_+ \cap \hat{\Lambda}_+) \geq \lebesgue(\tilde{\Lambda}_+ \cap \hat{\Lambda}_-)\big)\\ 
&\quad + \PP\Big(\overline{[0,1]^d \setminus \tilde{\Lambda}_+} = \overline{\Lambda^0_+}, \lebesgue(\tilde{\Lambda}_+ \cap \hat{\Lambda}_-) > \lebesgue(\tilde{\Lambda}_+ \cap \hat{\Lambda}_+)\Big)\\ 
&= \PP\big(\tilde{\Lambda}_+ = \overline{\Lambda^0_+}, \lebesgue(\Lambda^0_+ \cap \hat{\Lambda}_+) \geq \lebesgue(\Lambda^0_+ \cap \hat{\Lambda}_-)\big)\\ 
&\quad + \PP\big(\tilde{\Lambda}_+ = \Lambda^0_-, \lebesgue(\Lambda^0_- \cap \hat{\Lambda}_-) > \lebesgue(\Lambda^0_- \cap \hat{\Lambda}_+)\big)\\  &\geq \PP\big(\tilde{\Lambda}_+ = \overline{\Lambda^0_+},  \lebesgue(\Lambda^0_+ \cap \hat{\Lambda}_-) \leq \lebesgue(\Lambda^0_+)/2\big)\\ 
&\quad + \PP\big(\tilde{\Lambda}_+ = \Lambda^0_-,  \lebesgue(\Lambda^0_- \cap \hat{\Lambda}_+) < \lebesgue(\Lambda^0_-)/2\big)\\
&\geq \PP\big(\tilde{\Lambda}_+ = \overline{\Lambda^0_+},  \lebesgue(\Lambda^0_+ \cap \hat{\Lambda}_-) \leq \kappa/2\big)\\ 
&\quad + \PP\big(\tilde{\Lambda}_+ = \Lambda^0_-,  \lebesgue(\Lambda^0_- \cap \hat{\Lambda}_+) \leq \kappa/2\big)\\
&\geq \PP\big(\tilde{\Lambda}_+ = \overline{\Lambda^0_+},  \lebesgue(\Lambda^0_+ \vartriangle \hat{\Lambda}_+) \leq \kappa/2\big)\\ 
&\quad + \PP\big(\tilde{\Lambda}_+ = \Lambda^0_-,  \lebesgue(\Lambda^0_+ \vartriangle \hat{\Lambda}_+) \leq \kappa/2\big)\\
&= \PP\big(\tilde{\Lambda}_+ \in \{\overline{\Lambda^0_+}, \Lambda^0_-\},  \lebesgue(\Lambda^0_+ \vartriangle \hat{\Lambda}_+) \leq \kappa/2\big).
\end{align*}
By Theorem \ref{theo:rate_cp} we have $\lebesgue(\hat{\Lambda}_+ \vartriangle \Lambda^0_+) \overset{\PP}{\longrightarrow} 0$ and from above we know that $\PP(\tilde{\Lambda}_+ \in \{\overline{\Lambda^0_+}, \Lambda^0_-\}) \longrightarrow 1$. Thus, the probability in the last line converges to $1$, which yields 
\[\lim_{\delta \to 0} \PP\big(\hat{\Lambda}{}^\ast_+ = \overline{\Lambda^0_+}\big) = 1. \]
\end{proof}

Using Theorem \ref{theo:tiling} we can prove Theorem \ref{theo:clt}.
\begin{proof}[Proof of Theorem \ref{theo:clt}]
We only prove the statement for $\hat{\theta}_\pm^\ast$, the proof for $\tilde{\theta}_\pm$ given $\Lambda^0_- \notin \mathcal{A}_+$ is identical. From \eqref{eq: theta_augm} we obtain the decomposition 
\[\theta_\pm^{\Lambda^0_+} = \theta^0_\pm + \frac{\sum_{\Sq(\alpha)^\circ \subset \Lambda^0_\pm} M_{\delta,\alpha}}{\sum_{\Sq(\alpha)^\circ \subset \Lambda^0_\pm} I_{\delta,\alpha}},\]
and from Proposition \ref{lem: orderFisher} it follows that
\[\delta^{d+2}\E\Big[\sum_{\Sq(\alpha)^\circ \subset \Lambda^0_\pm} I_{\delta,\alpha}\Big] = \frac{T}{2\theta^0_\pm}\lVert \nabla K\rVert^2_{L^2} \lebesgue(\Lambda^0_\pm) + \mathcal{O}(\delta^2) \underset{\delta \to 0}{\longrightarrow} \frac{T}{2\theta^0_\pm}\lVert \nabla K\rVert^2_{L^2} v^0_\pm.\]
Moreover, the variance bound in Proposition \ref{lem: orderFisher} yields that 
\[\frac{\sum_{\Sq(\alpha)^\circ \subset  \Lambda^0_+}I_{\delta,\alpha}}{\E[\sum_{\Sq(\alpha)^\circ \subset \Lambda^0_+} I_{\delta,\alpha}]} \overset{\PP}{\longrightarrow} 1.\]
Combining these, a standard martingale CLT therefore gives  
\[\delta^{-(d+2)/2}(\theta^{\Lambda^0_+}_+ - \theta^0_+) \overset{d}{\longrightarrow} \mathcal{N}\left(0, \frac{2\theta^0_+}{T\lVert \nabla K\rVert^2_{L^2}v^0_+} \right).\]
Using also that by Theorem \ref{theo:tiling} we have 
\[\one_{\{\hat{\Lambda}{}^\ast_+ = \overline{\Lambda^0_+}\}} \overset{\PP}{\longrightarrow} 1,\]
it therefore follows from Slutsky's theorem that
\[\delta^{-(d/2+1)}(\hat{\theta}^{\ast}_\pm - \theta^0_\pm)\one_{\{\hat{\Lambda}^\ast_+ = \overline{\Lambda^0_+}\}} = \delta^{-(d/2+1)}(\theta_\pm^{\Lambda^0_+} - \theta^0_\pm) \one_{\{\hat{\Lambda}^\ast_+ = \overline{\Lambda^0_+}\}}  \overset{d}{\longrightarrow} \mathcal{N}\left(0, \frac{2\theta^0_\pm}{T\lVert \nabla K\rVert^2_{L^2}v^0_\pm} \right).\]
Since, again by Theorem \ref{theo:tiling},
\begin{align*}
    \PP\Big(\{\lvert\delta^{-d/2-1}(\hat{\theta}^\ast_\pm-\theta_\pm^0)\one_{\{\hat{\Lambda}_+^\ast \neq \overline{\Lambda_+^0}\}} \rvert >\varepsilon\}\Big)&=\PP\Big(\{\lvert \delta^{-d/2-1}(\hat{\theta}^\ast_\pm-\theta_\pm^0)\rvert>\varepsilon\}\cap \{\hat{\Lambda}_+^\ast\neq \overline{\Lambda_+^0}\}\Big)\\
    &\leq \PP\Big(\hat{\Lambda}_+^\ast \neq \overline{\Lambda_+^0}\Big)\underset{\delta \to 0}{\longrightarrow} 0
\end{align*}
and therefore
\[\delta^{-d/2-1}(\hat{\theta}^\ast_\pm-\theta_\pm^0)\one_{\{\hat{\Lambda}_+^\ast \neq \overline{\Lambda_+^0}\}} \overset{\PP}{\longrightarrow} 0,\]
we conclude that indeed 
\[\delta^{-(d/2+1)}(\hat{\theta}^{\ast}_\pm - \theta^0_\pm)  \overset{d}{\longrightarrow} \mathcal{N}\left(0, \frac{2\theta^0_\pm}{T\lVert \nabla K\rVert^2_{L^2}v^0_\pm} \right).\]
\end{proof}

\section{Results for specific models}
In this section we give explicit constructions of the candidate sets $\mathcal{A}_+$ and estimator convergence rates for two specific shape restrictions. 
\label{sec: main}
\subsection{Estimation of change interfaces with graph representation}\label{sec: graph}
Consider model A from Section \ref{sec: setup}, for which $\Lambda_+^0$ is fully determined by the continuous change interface \\$\tau^0\colon [0,1]^{d-1} \to [0,1]$. For any function  $\tau \colon [0,1]^{d-1} \to [0,1]$ let us define the open epigraph 
\[\operatorname{epi} \tau \coloneqq \{(x,y) \in [0,1]^{d-1} \times [0,1]: \tau(x) > y\}.\]
For $\gamma \in [n]^{d-1}$ let $\tilde{\Sq}_{d-1}(\gamma)$ be a $(d-1)$-dimensional hypercube with edge length $\delta$ that is centered at the point $z_\gamma \coloneqq \delta(\gamma - \tfrac{1}{2}\one)$. The hypercubes $\tilde{\Sq}_{d-1}(\gamma)$ are chosen such that $(\tilde{\Sq}_{d-1}(\gamma))_{\gamma \in [n]^{d-1}}$ forms a partition of $[0,1]^{d-1}$, i.e., $[0,1]^{d-1} = \bigcup_{\gamma \in [n]^{d-1}} \tilde{\Sq}_{d-1}(\gamma)$. Let us also denote $\Sq_{d-1}(\gamma) \coloneqq \overline{\tilde{\Sq}_{d-1}(\gamma)}$. With 
\[\mathcal{G} \coloneqq \{i\delta: i=0,1,\ldots,n\}^{n^{d-1}},\]
we now define the grid functions 
\[\tau_{\zeta}(x) = \sum_{\gamma \in [n]^{d-1}} \zeta_\gamma \bm{1}_{\tilde{\Sq}_{d-1}(\gamma)}(x), \quad x \in [0,1]^{d-1}, \quad \zeta = (\zeta_\gamma)_{\gamma \in [n]^{d-1}} \in \mathcal{G},\]
and set 
\[\Lambda_+(\zeta) \coloneqq (\operatorname{epi} \tau_\zeta)^+ \in \mathcal{P}, \quad \zeta \in \mathcal{G}.\]
In other words, $\Lambda_+(\zeta)$ can be written as 
\[\Lambda_+(\zeta)=\bigcup_{\gamma\in [n]^{d-1}:\zeta_\gamma<1} \Sq_{d-1}(\gamma)\times[\zeta_\gamma,1].\]
We then choose our candidate tiling sets as 
\[\mathcal{A}_+ = \big\{\Lambda_+(\zeta) : \zeta \in \mathcal{G}\big\}.\]
Note that the size $\lvert \mathcal{A}_+ \rvert = n^{d-1}(n+1)$ of this family of candidate sets if significantly smaller than that of the uninformed choice $\mathcal{A}_+ = \mathcal{P}$, which is $2^{n^d}$. 
To see that $\mathcal{A}_+$ is a valid choice, note that for 
\[\zeta^\uparrow_\gamma \coloneqq \delta \lceil\delta^{-1}\sup\{\tau^0(x): x \in \tilde{\Sq}_{d-1}(\gamma)\}\rceil, \quad \zeta^\downarrow_\gamma \coloneqq \delta \lfloor\delta^{-1}\inf\{\tau^0(x): x \in \tilde{\Sq}_{d-1}(\gamma)\}\rfloor,\]
it holds that $\tau_{\zeta^\downarrow}$ is the maximal grid function dominated by $\tau^0$ and therefore 
\[\Lambda^\updownarrow_+ = (\operatorname{epi} \tau^0)^+ = (\operatorname{epi} \tau_{\zeta^\downarrow})^+ = \Lambda_+(\zeta^\downarrow),\]
whence in particular $\Lambda^\updownarrow_+ \in \mathcal{A}_+$ as required. Furthermore, the function
\[\varphi\colon \mathcal{A}_+ \to \mathcal{G}, \quad \Lambda_+(\zeta) \to \zeta,\]
is a bijection. Thus, for the estimator $\hat{\Lambda}_+$ from the previous section, if we set 
\[\hat{\zeta} \coloneqq \varphi(\hat{\Lambda}_+),\]
it follows that 
\[\hat{\Lambda}_+ = \Lambda_+(\hat{\zeta}) = (\operatorname{epi} \tau_{\hat{\zeta}})^+.\]
Consequently, if we define the change interface estimator 
\[\hat{\tau} = \tau_{\hat{\zeta}},\]
then we have the identity
\[\lVert \hat{\tau} - \tau^0 \rVert_{L^1([0,1]^{d-1})} = \lebesgue(\hat{\Lambda}_+ \vartriangle \Lambda^0_+).\]
An example of a change interface and its grid function approximation can be seen in Figure \ref{fig: Changepoint2D}.
\begin{figure}[t!]
\centering
		\includegraphics[width=0.6\linewidth]{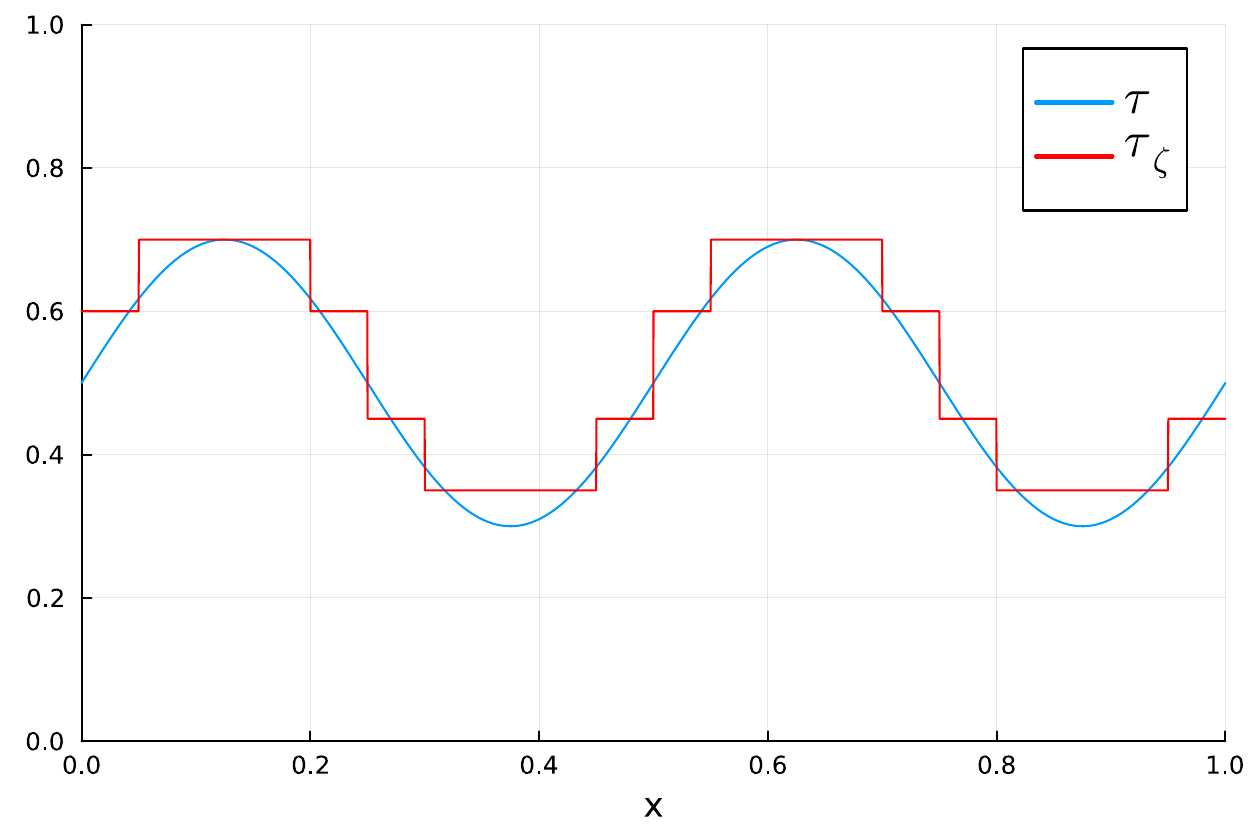}
	\caption{Example of a change interface $\tau$ and a piecewise constant approximation $\tau_\zeta$ in dimension $d=2$ with $\delta=0.05$.}
	\label{fig: Changepoint2D}
\end{figure}
Under Hölder smoothness assumptions on the change  interface $\tau^0$, Theorem \ref{theo:rate_cp} allows us to obtain a convergence rate for the change domain estimator $\hat{\Lambda}_+$ in the symmetric difference pseudometric, or equivalently, for the change interface estimator $\hat{\tau}$ in the $L^1$-metric. For $\beta \in (0,1]$, $L > 0$, let $\mathcal{H}(\beta,L)$ be the $(\beta,L)$-Hölder class on $[0,1]^{d-1}$ w.r.t.\ the maximum metric, i.e., 
\[\mathcal{H}(\beta,L) \coloneqq \big\{f\colon [0,1]^{d-1}\to \R: \lvert f(x) - f(y) \rvert \leq L \lVert x-y \rVert_\infty^\beta \text{ for all } x,y\in [0,1]^{d-1} \big\}.\]
For ease of exposition we implicitly assume in the next statement that $\tau^0$ does not vary with $\delta$.
\begin{proposition}\label{prop:interface}
If $\tau^0 \in \mathcal{H}(\beta,L)$ for $\beta \in (0,1]$ and $L > 0$, then there exists a constant $C$ depending only on $\beta,L,d,\underline{\theta},\overline{\theta},T$ and $\underline{\eta}$ such that 
\[\E\big[\lVert \hat{\tau} - \tau^0 \rVert_{L^1([0,1]^{d-1})}\big] = \E\big[\lebesgue(\hat{\Lambda}_+ \vartriangle \Lambda^0_+)\big] \leq C\delta^\beta.\]
Moreover, if $\tau^0$ is not identically $1$ (resp.\ not identically $0$), then $\lvert \hat{\theta}_+ - \theta^0_+ \rvert = \mathcal{O}_{\PP}(\delta^{\beta/2})$\, (resp.\ $\lvert \hat{\theta}_- - \theta^0_- \rvert = \mathcal{O}_{\PP}(\delta^{\beta/2})$). In particular, if $\tau^0(x) \in (0,1)$ for some $x \in [0,1]^{d-1}$, it holds that $\lVert \hat{\theta} - \theta^0 \rVert_{L^1((0,1)^d)} \in \mathcal{O}_{\PP}(\delta^{\beta/2}).$
\end{proposition}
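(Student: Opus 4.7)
The plan is to reduce Proposition \ref{prop:interface} to an application of Theorem \ref{theo:rate_cp}, whose hypothesis is a bound of the form $\lvert \mathcal{B}\rvert \leq c\delta^{-d+\beta}$ on the number of hypercubes from the $\delta$-grid that are sliced by $\partial \Lambda^0_+$. Once this complexity bound is verified, the two asserted rates follow immediately: for the change interface from the identity $\lVert \hat{\tau}-\tau^0\rVert_{L^1([0,1]^{d-1})} = \lebesgue(\hat{\Lambda}_+ \vartriangle \Lambda^0_+)$ that was established in the paragraph preceding the statement, and for the diffusivity parameters from Corollary \ref{cor: rate}.

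The main step is therefore the bound on $\lvert \mathcal{B}\rvert$, which I would derive by a column-by-column counting argument exploiting the graph structure of $\partial \Lambda^0_+$. For each $\gamma\in[n]^{d-1}$, the set $\partial \Lambda^0_+ \cap (\tilde{\Sq}_{d-1}(\gamma)\times[0,1])$ is exactly the graph of $\tau^0$ restricted to $\tilde{\Sq}_{d-1}(\gamma)$, and by the Hölder assumption its range is contained in an interval of length at most $L\delta^\beta$. Such an interval can intersect at most $\lceil L \delta^{\beta-1}\rceil +1$ of the $n$ vertical tiles $[k\delta,(k+1)\delta]$, so the column $\{\Sq(\alpha): \alpha=(\gamma,k),\, k \in [n]\}$ contributes at most $L\delta^{\beta-1}+2$ hypercubes to $\mathcal{B}$. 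Summing over the $n^{d-1}=\delta^{-(d-1)}$ columns and using $\beta\leq 1$ so that $\delta^{\beta-1}\geq 1$, this yields
\[\lvert \mathcal{B}\rvert \leq \delta^{-(d-1)}(L\delta^{\beta-1}+2) \lesssim \delta^{-d+\beta},\]
which is precisely \eqref{ass:bound}. Theorem \ref{theo:rate_cp} then gives the announced $\delta^\beta$ rate on $\mathbb{E}[\lebesgue(\hat{\Lambda}_+ \vartriangle \Lambda^0_+)]$, and the identity with $\lVert \hat\tau -\tau^0\rVert_{L^1([0,1]^{d-1})}$ completes the proof of the first claim.

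For the parameter statements, the plan is to invoke Corollary \ref{cor: rate} with the $|\mathcal{B}|$-bound just derived. The non-degeneracy hypothesis $\liminf_{\delta\to 0}\lebesgue(\Lambda^0_\pm) > 0$ required there must be translated into the interface language: since $\tau^0$ is continuous and fixed independently of $\delta$, the condition $\tau^0\not\equiv 1$ implies there exists $x_0$ and a neighborhood $U \subset [0,1]^{d-1}$ on which $\tau^0 \leq 1-\varepsilon$ for some $\varepsilon>0$, giving $\lebesgue(\Lambda^0_+) \geq \varepsilon \lebesgue(U) > 0$; symmetrically for $\Lambda^0_-$ when $\tau^0 \not\equiv 0$. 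Combining both whenever $\tau^0(x)\in(0,1)$ at some point yields $\lVert\hat\theta-\theta^0\rVert_{L^1((0,1)^d)} = \mathcal{O}_{\mathbb{P}}(\delta^{\beta/2})$ directly from Corollary \ref{cor: rate}.

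The only place where care is needed is the counting argument, and concretely the verification that the Hölder condition in the maximum metric (rather than Euclidean) yields the oscillation bound $L\delta^\beta$ on $\tilde{\Sq}_{d-1}(\gamma)$, which is immediate from $\mathrm{diam}_\infty(\tilde{\Sq}_{d-1}(\gamma))=\delta$. Everything else is a direct invocation of the general machinery already set up in Section \ref{sec:boundary complexity}.
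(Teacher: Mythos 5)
Your proposal is correct and follows essentially the same route as the paper: verify the boundary complexity bound $\lvert \mathcal{B}\rvert \lesssim \delta^{-d+\beta}$ by a column-wise count, using that the Hölder condition in the maximum metric bounds the oscillation of $\tau^0$ on each cell $\tilde{\Sq}_{d-1}(\gamma)$ by $L\delta^\beta$, and then invoke Theorem \ref{theo:rate_cp} together with Corollary \ref{cor: rate} and the identity $\lVert \hat{\tau}-\tau^0\rVert_{L^1([0,1]^{d-1})}=\lebesgue(\hat{\Lambda}_+\vartriangle\Lambda^0_+)$. Your explicit handling of the rounding constant (absorbed via $\beta\leq 1$) and of the positive-volume condition needed for Corollary \ref{cor: rate} only makes explicit what the paper leaves implicit.
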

\begin{proof} 
By Theorem \ref{theo:rate_cp} and Corollary \ref{cor: rate}, it suffices to show that $\lvert \mathcal{B} \rvert \leq L\delta^{d-\beta}$.  To this end, we first observe that in the change interface model it holds that 
\begin{align*}
\mathcal{B} &= \big\{\alpha \in [n]^d: \Sq(\alpha) \cap \partial \Lambda^0_+ \neq \varnothing\big\}\\
&= \big\{(\gamma,j) \in [n]^{d-1} \times [n]: (\Sq_{d-1}(\gamma)^\circ \times ((j-1)\delta,j\delta)) \cap \tau^0(\Sq_{d-1}(\gamma)) \neq \varnothing\big\} \\
&\subset \big\{(\gamma,j) \in [n]^{d-1} \times [n]: j \in \delta^{-1}(\zeta^\downarrow_\gamma,\zeta^\uparrow_\gamma]\big\}.
\end{align*}
Because $\tau^0 \in \mathcal{H}(\beta,L)$ implies  that $\lvert \zeta^\uparrow_\gamma - \zeta^\downarrow_\gamma \rvert \leq L \delta^\beta$ for any $\gamma \in [n]^{d-1}$, we obtain \[\lvert \{j \in \delta^{-1}(\zeta^\downarrow_\gamma, \zeta^\uparrow_\gamma] \cap[n]\}\rvert \leq L\delta^{\beta - 1}.\]
Thus, from above, 
\[\lvert \mathcal{B} \rvert \leq \delta^{-(d-1)} L \delta^{\beta -1} = L\delta^{-d+\beta},\]
as desired.
\end{proof}
\begin{remark}\label{rem:opti}
The domain estimation rate $\delta^\beta$ translates to $N^{-\beta/d}$ in terms of the  number of observations $N=\delta^{-d}$. As pointed out in \cite[Chapter 3-5]{tsy93}, for appropriately designed random measurement locations, the minimax rate for estimating a boundary fragment in image reconstruction is given by $N^{-\beta/(\beta+d-1)}$ for arbitrary $\beta>0$. Unless we have Lipschitz regularity $\beta =1$, this rate is, however, not achievable for an equidistant deterministic design, which is usually referred to as \textit{regular design}. In fact, it can be shown that with regular  design, the rate $N^{-\beta/d}$ is the optimal rate for $\beta \in (0,1]$ in the edge estimation problem by adapting \cite[Theorem 3.3.1]{tsy93}. Indeed, consider the regular  design image reconstruction problem 
\[Y_{\alpha} = \theta(x_{\alpha}) + \varepsilon_{\alpha}, \quad \alpha \in [n]^d,\]
where 
\[\theta(x) = \theta_- \one_{\Lambda_-}(x) + \theta_+ \one_{\Lambda_+}(x), \quad x \in [0,1]^d,\]
for known values $\theta_- \neq \theta_+$ and $\Lambda_- \uplus \Lambda_+ = [0,1]^d$ and denote by $\PP_{\Lambda_+}$ the law generated by the observations for fixed $\Lambda_+$.  Let $\tau^0 \equiv 0$ and further let
\[\tau^1(x) \coloneqq \sum_{\gamma\in [n]^{d-1}}\norm{x-z_\gamma}_\infty^\beta\one_{\{\norm{x-z_\gamma}_\infty\leq\delta/2\}}, \quad x \in [0,1]^{d-1}.\]
Then, for $\Lambda_+^i \coloneqq \{(x,y) \in [0,1]^d: \tau^i(x) > y\},$ $i=0,1$, we have 
\[\{x_{\alpha} : \alpha \in [n]^d\} \subset \Lambda_+^0 \cap \Lambda_+^1 = \Lambda_+^1,\] 
and therefore $\PP_{\Lambda_+^1} = \PP_{\Lambda_+^0}$. Since furthermore $\lebesgue(\Lambda_+^0 \vartriangle \Lambda_+^1) = \int \tau^1(x) \diff{x} \asymp \delta^\beta$ and $\tau_i \in \mathcal{H}(\beta,1)$ for $i=0,1$,  we obtain the minimax lower bound 
\[\inf_{\hat{\Lambda}_+}\sup_{\Lambda_+ \in \Xi(\beta,L)}  \E_{\Lambda_+}\big[\lebesgue(\hat{\Lambda}_+ \vartriangle \Lambda_+)\big] \gtrsim \delta^\beta = N^{-\beta/d},\]
for the class $\Xi(\beta,L)$ of epigraphs of functions $\tau \in \mathcal{H}(\beta,1)$.
\end{remark}

\begin{remark} 
Note that the underlying assumption on the graph orientation, that is $\Lambda^0_+$ is an epigraph w.r.t.\ the $d$-th coordinate, can be easily circumvented by adapting the candidate set $\mathcal{A}_+$ to represent any graph structure that may only become visible after rotation of the cube $[0,1]^d$. Since a $d$-dimensional hypercube admits $2d$ faces, the size of the adapted candidate set would increase from $n^{d-1}(n+1)$ to $2dn^{d-1}(n+1)$.
\end{remark}
If $\tau^0$ is a grid function, the polynomial growth of the candidate set $\mathcal{A}_+$ implies a much stronger result by Theorem \ref{theo:tiling}. Analogously to before we set $\tilde{\zeta} = \varphi(\tilde{\Lambda}_+)$ and $\tilde{\tau} = \tau_{\tilde{\zeta}}$ for the estimator $\tilde{\Lambda}_+$ from Section \ref{sec:tiling}.
\begin{corollary} 
Suppose that $\tau^0(\delta) = \tau_{\zeta^0(\delta)}$ for  $\zeta^0(\delta) \in \{ \zeta \in \mathcal{G}(\delta): \lVert \tau_{\zeta(\delta)} \rVert_{L^1([0,1]^{d-1}} \in [\kappa,1-\kappa]\}$ for some $\kappa \in (0,1/2)$. Then, 
\[\lim_{\delta \to 0} \PP\Big(\lVert \tilde{\tau} - \tau^0 \rVert_{L^1([0,1]^{d-1})} = 0\Big) = 1.\]
If we also have the pointwise convergence $\tau^0(\delta) \underset{\delta \to 0}{\longrightarrow} \tau^0$ for some right-or left-continuous $\tau^0$ such that neither $\tau^0 \equiv 1$, nor $\tau^0 \equiv 0$, then
\[\delta^{-(d/2+1)}\big(\tilde{\theta}_\pm - \theta^0_\pm) \overset{d}{\longrightarrow} \mathcal{N}\left(0, \frac{2\theta^0_\pm}{T\lVert \nabla K\rVert^2_{L^2}v^0_\pm} \right),\]
where $\nu^0_- = \lVert \tau^0 \rVert_{L^1([0,1]^{d-1})}$ and $\nu^0_+ = 1 - \nu^0_-$.
\end{corollary}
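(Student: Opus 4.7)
The plan is to realise this corollary as a direct application of Theorem \ref{theo:tiling} and Theorem \ref{theo:clt}: the heavy lifting has already been done in Section \ref{sec:tiling}, and the task reduces to checking the hypotheses in the grid-function setup and translating the set-theoretic conclusions back into statements about the change interface $\tilde{\tau}$.

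First I would verify the tiling hypothesis $\overline{\Lambda^0_+} = \Lambda^\updownarrow_+$: since $\tau^0 = \tau_{\zeta^0}$ is a grid function, $\operatorname{epi}\tau^0$ is a finite union of open $\delta$-boxes that already sit on the grid, so taking closures gives $\overline{\operatorname{epi}\tau^0} = (\operatorname{epi}\tau^0)^+ = \Lambda_+(\zeta^0)$. Assumption \ref{ass:bounds_domain} then follows from the identity $\lebesgue(\Lambda^0_+) = 1 - \norm{\tau^0}_{L^1([0,1]^{d-1})}$ together with the prescribed bound $\norm{\tau_{\zeta^0(\delta)}}_{L^1} \in [\kappa,1-\kappa]$, and Assumption \ref{ass:candidates} is immediate from $\lvert \mathcal{A}_+ \rvert = n^{d-1}(n+1) \lesssim \delta^{-d}$. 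The only mildly delicate preparatory step is to argue $\Lambda_-^0 \notin \mathcal{A}_+$: every element of $\mathcal{A}_+$ restricts on each vertical column $\Sq_{d-1}(\gamma)\times[0,1]$ either to an upper slab $\Sq_{d-1}(\gamma)\times[\zeta_\gamma,1]$ or to the empty set, whereas $\Lambda_-^0$ restricts to a lower slab $\Sq_{d-1}(\gamma)\times[0,\zeta^0_\gamma]$; since $\norm{\tau^0}_{L^1} \leq 1-\kappa$ excludes $\tau^0 \equiv 1$, at least one column witnesses a genuine mismatch.

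With these checks, Theorem \ref{theo:tiling} yields $\PP(\tilde{\Lambda}_+ = \overline{\Lambda^0_+}) \to 1$. Because $\varphi\colon \mathcal{A}_+ \to \mathcal{G}$ is a bijection, on this event $\tilde{\zeta} = \varphi(\tilde{\Lambda}_+) = \zeta^0$, hence $\tilde{\tau} = \tau_{\tilde{\zeta}} = \tau^0$ pointwise and a fortiori $\norm{\tilde{\tau} - \tau^0}_{L^1([0,1]^{d-1})} = 0$, giving the first claim.

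For the CLT the additional input is Assumption \ref{ass:volume_conv}. Starting from $\lebesgue(\Lambda^0_+(\delta)) = 1 - \norm{\tau^0(\delta)}_{L^1}$ and the pointwise convergence $\tau^0(\delta) \to \tau^0$, dominated convergence (the integrands are uniformly bounded by $1$) gives $\norm{\tau^0(\delta)}_{L^1} \to \norm{\tau^0}_{L^1} =: \nu^0_-$, so $\nu^0_+ = 1 - \nu^0_-$ is the candidate limiting volume. The subtlest point, though still elementary, is to argue $\nu^0_\pm \in (0,1)$: the assumption that $\tau^0$ is right- or left-continuous combined with $\tau^0 \not\equiv 0,1$ forces both $\{\tau^0 > 0\}$ and $\{\tau^0 < 1\}$ to contain an open rectangle, ensuring $\norm{\tau^0}_{L^1} \in (0,1)$. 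Once Assumption \ref{ass:volume_conv} is secured, the branch of Theorem \ref{theo:clt} corresponding to $\Lambda^0_- \notin \mathcal{A}_+$ (whose hypothesis was verified above) delivers precisely the stated CLT for $\tilde{\theta}_\pm$ with the asymptotic variances $2\theta^0_\pm / (T\norm{\nabla K}^2_{L^2}\nu^0_\pm)$.
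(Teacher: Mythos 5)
Your proposal is correct and follows essentially the same route as the paper: verify that $\Lambda^0_-$, being a subgraph with $\tau^0 \not\equiv 0,1$, cannot lie in the epigraph-type candidate family $\mathcal{A}_+$, and then invoke Theorem \ref{theo:tiling} and Theorem \ref{theo:clt}. You merely spell out the checks the paper leaves implicit (the tiling identity $\overline{\Lambda^0_+} = \Lambda^\updownarrow_+$, Assumptions \ref{ass:bounds_domain}, \ref{ass:candidates}, \ref{ass:volume_conv} via $\lebesgue(\Lambda^0_+) = 1 - \lVert \tau^0\rVert_{L^1}$ and dominated convergence), which is all sound.
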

\begin{proof} 
It suffices to note that since $\Lambda^0_-$ is the subgraph of $\tau^0(\delta)$ and the latter is not identical $0$ or $1$, $\Lambda^0_-$ cannot be an epigraph, that is, $\Lambda^0_- \notin \mathcal{A}_+$. The statement is now a direct consequence of Theorem \ref{theo:tiling} and Theorem \ref{theo:clt}.
\end{proof}

\subsection{Estimation of convex sets}
Suppose that model B from Section \ref{sec: setup} holds, i.e., $\Lambda_+^0 \subset \overline{\Lambda}$ is convex. A simple choice for $\mathcal{A}_+$ is given by $\mathcal{A}_+ = \{C^+: C\subset [0,1]^d \text{ convex}\}$. This choice of candidate sets is however not particularly constructive. Let us therefore propose another family of candidate sets, whose construction follows a similar principle as the one for the graph representation model from the previous subsection.

The basic observation is that by convexity, for any $x \in\operatorname{int}\Lambda_+^0$, the vertical ray $y \mapsto x + ye_d$ intersects  $\partial \Lambda_+$ in exactly two points. The natural idea is therefore to build candidate sets from hypercuboids $\Sq_{d-1}(\gamma) \times [\underline{\zeta}{}_\gamma, \overline{\zeta}_\gamma]$ for $\underline{\zeta}_\gamma,\overline{\zeta}_\gamma$ living on the grid $\mathcal{G}$. Heuristically speaking, as the upper and lower intersection points of the vertical ray can be described by a concave and convex function, respectively, we aim to approximate those by piecewise constant functions on $\tilde{\operatorname{Sq}}_{d-1}(\gamma)$ in analogy to the graph representation of Section \ref{sec: graph}. In similarity to Section \ref{sec: graph}, let
\begin{align*} 
\zeta^\uparrow_\gamma &\coloneqq \delta \lceil\delta^{-1}\sup\{x_d: x \in \partial\Lambda_+^0\cap(\Sq_{d-1}(\gamma)^\circ\times[0,1])\}\rceil, \quad \gamma \in [n]^{d-1},\\ 
\zeta^\downarrow_\gamma &\coloneqq \delta \lfloor\delta^{-1}\inf\{x_d: x \in \partial\Lambda_+^0\cap(\Sq_{d-1}(\gamma)^\circ\times[0,1])\}\rfloor, \quad \gamma \in [n]^{d-1},
\end{align*}
be the grid projections of upper and lower limits of the intersection of the convex set $\Lambda^+_0$ with the strip $\Sq_{d-1}(\gamma)^\circ \times [0,1]$. Here, the supremum and the infimum of the empty is set to $0$. 
Consider candidate sets
\[\Lambda_+(\zeta) \coloneqq \bigcup_{\gamma \in [n]^{d-1}:\underline{\zeta}_\gamma<\overline{\zeta}_\gamma} \Sq_{d-1}(\gamma) \times [\underline{\zeta}_\gamma,\overline{\zeta}_\gamma], \quad \underline{\zeta}_\gamma, \overline{\zeta}_\gamma \in \mathcal{G},\]
and the minimal tiling
\[\Lambda^\updownarrow_+ \coloneqq \bigcup_{\gamma \in [n]^{d-1}:\zeta_\gamma^\downarrow<\zeta_\gamma^\uparrow} \Sq_{d-1}(\gamma) \times [\zeta^\downarrow_\gamma, \zeta^\uparrow_\gamma]\]
both belonging to 
\begin{equation}\label{eq: representation set}
\mathcal{A}_+ = \Big\{{\Lambda}_+(\zeta): \underline{\zeta}_\gamma, \overline{\zeta}_\gamma \in \mathcal{G}, \gamma \in [n]^{d-1} \Big\}.
\end{equation}
Then, it holds that $\lvert \mathcal{A}_+\rvert = n^{d-1}((n+n^2)/2+1) \asymp n^{d+1}$.
An exemplary illustration in $d=2$ is given in Figure \ref{fig:change_convex2d}.
\begin{figure}[t!]
\centering
\begin{subfigure}{.49\textwidth}
		\centering\includegraphics[width=1.\linewidth]{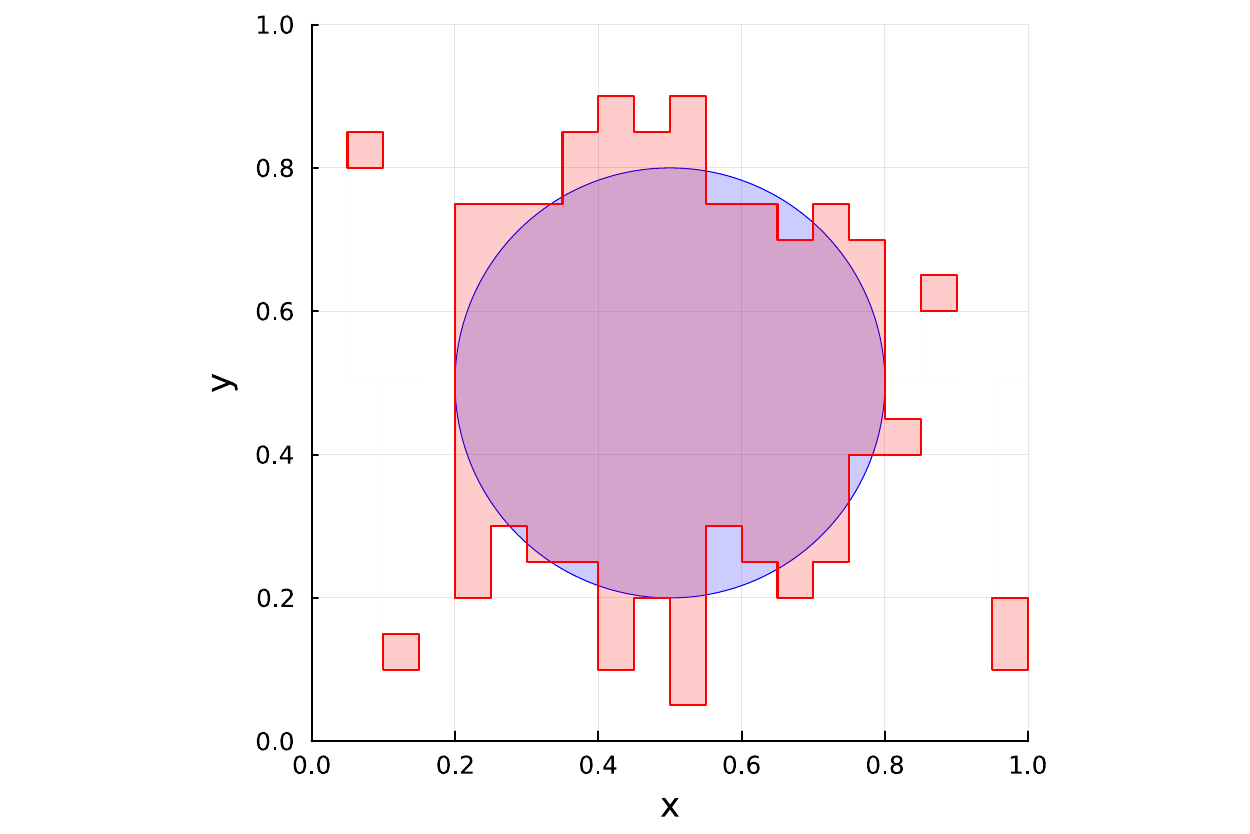}
	\end{subfigure}
\begin{subfigure}{.49\textwidth}
		\centering\includegraphics[width=1.\linewidth]{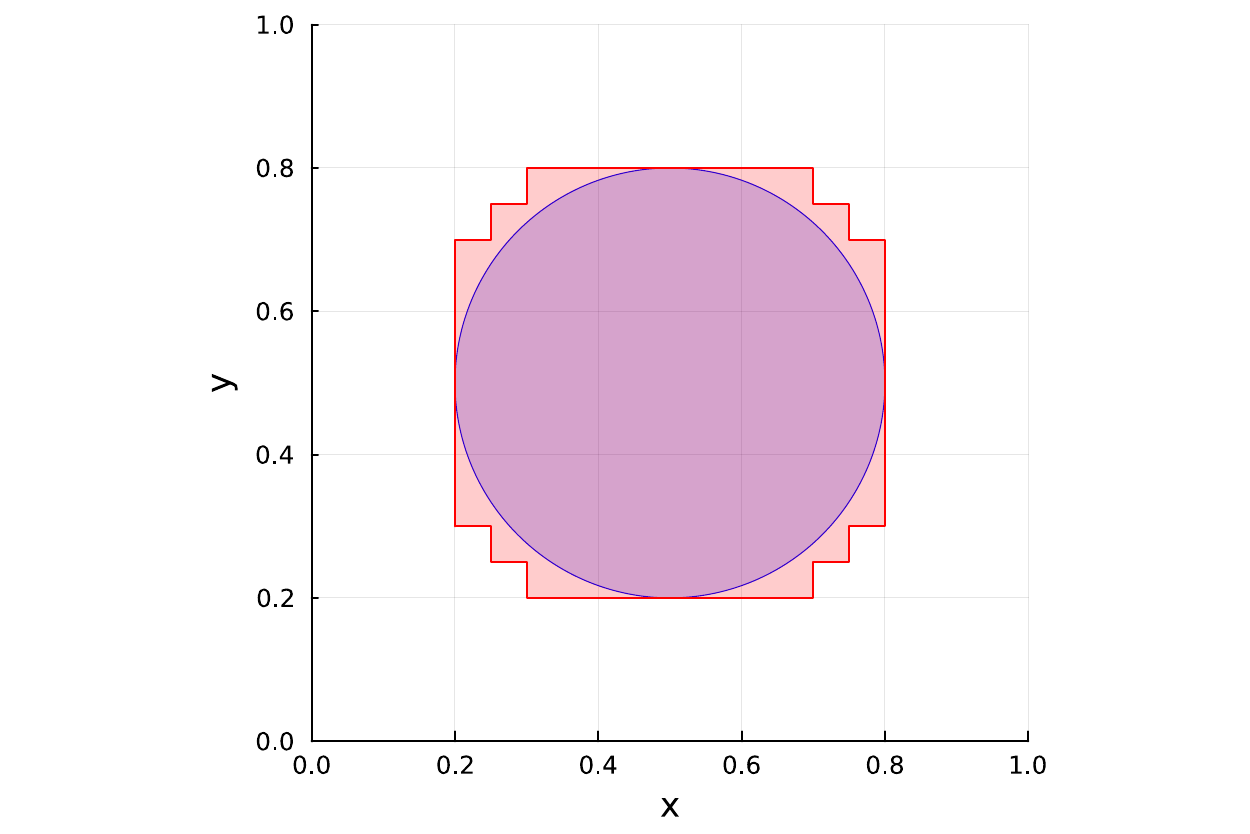}
	\end{subfigure}
\caption{Approximation (red) of $\Lambda_+^0$ (blue) for $\delta=0.05.$; left: by some ${\Lambda}_+(\zeta)$ from \eqref{eq: representation set}; right: by $\Lambda_+^\updownarrow$.}
\label{fig:change_convex2d}
\end{figure}

In order to apply Theorem \ref{theo:rate_cp}, it remains to control the size of the boundary tiling indices $\mathcal{B}$, which can be done with a classical result from convex geometry. By \cite[Corollary 2]{lassak_covering_1988}, the boundary of any convex set $C \subset [0,1]^d$ can be covered by at most 
\[n^d -(n-2)^d < 2dn^{d-1} = 2d \delta^{-d+1}\]
hypercubes from the tiling $\{\Sq(\alpha): \alpha \in [n]^d \}$ of $[0,1]^d$. This entails the bound 
\[\lvert \mathcal{B} \rvert =\lvert \{\alpha\in[n]^d:\operatorname{Sq}(\alpha)^\circ\cap \partial\Lambda_+^0\neq\varnothing\} \rvert < 2d\delta^{-d+1}.\]
Consequently, Theorem \ref{theo:rate_cp} and Corollary \ref{cor: rate} yield the following convergence result. 
\begin{proposition}\label{prop:conv}
Suppose that $\Lambda^0_+$ is convex and $\mathcal{A}_+$ be given by \eqref{eq: representation set}. Then, for some absolute constant $C$ depending only on $d,\underline{\theta},\overline{\theta},T$ and $\underline{\eta}$ it holds that
\[\E\big[\lebesgue(\hat{\Lambda}_+ \vartriangle \Lambda^0_+) \big] \leq {C}\delta.\] 
Moreover, if $\lebesgue(\Lambda^0_\pm) \neq 0$, it holds that $\lvert \hat{\theta}_\pm -\theta^0_\pm\rvert \in \mathcal{O}_{\PP}(\delta^{1/2})$. In particular, if $\lebesgue(\Lambda^0_+) \notin \{0,1\}$, we have $\lVert \hat{\theta} - \theta^0 \rVert_{L^1((0,1)^d}) \in \mathcal{O}_{\PP}(\delta^{1/2})$.
\end{proposition}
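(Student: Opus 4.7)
My plan is to reduce the statement directly to Theorem \ref{theo:rate_cp} and Corollary \ref{cor: rate}, since the infrastructure built in Section \ref{sec: estconst} does essentially all of the work. The only genuine content specific to the convex setting is (a) verifying that the candidate family $\mathcal{A}_+$ defined in \eqref{eq: representation set} is admissible in the sense that $\Lambda^\updownarrow_+ \in \mathcal{A}_+$, and (b) controlling the cardinality of the boundary tile index set $\mathcal{B}$ with the exponent $\beta = 1$ required by the hypothesis \eqref{ass:bound}.

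For (a), I would argue that by convexity of $\Lambda^0_+$, each vertical strip $\Sq_{d-1}(\gamma)^\circ \times [0,1]$ intersects $\Lambda^0_+$ in a convex (hence vertically connected) slice, so the minimal tiling of $\Lambda^0_+$ in this strip is exactly the hypercuboid $\Sq_{d-1}(\gamma)\times [\zeta^\downarrow_\gamma,\zeta^\uparrow_\gamma]$ with the grid projection bounds $\zeta^\downarrow_\gamma,\zeta^\uparrow_\gamma$ introduced just before \eqref{eq: representation set}. Taking the union over $\gamma \in [n]^{d-1}$ identifies $\Lambda^\updownarrow_+$ with one of the elements of $\mathcal{A}_+$, as displayed in the text. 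For (b), I would quote the convex covering bound \cite[Corollary 2]{lassak_covering_1988} cited in the excerpt: the boundary of any convex subset of $[0,1]^d$ intersects fewer than $n^d - (n-2)^d < 2dn^{d-1}$ tiles from the regular $\delta$-grid. Rewriting this in terms of $\delta = 1/n$ gives
\[
\lvert \mathcal{B}\rvert = \lvert \{\alpha \in [n]^d : \Sq(\alpha)^\circ \cap \partial \Lambda^0_+ \neq \varnothing\} \rvert < 2d\, \delta^{-d+1},
\]
which is precisely the hypothesis \eqref{ass:bound} of Theorem \ref{theo:rate_cp} with $\beta = 1$ and $c = 2d$.

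Feeding these two pieces into Theorem \ref{theo:rate_cp} yields the bound $\E[\lebesgue(\hat{\Lambda}_+ \vartriangle \Lambda^0_+)] \leq C\delta$ with $C$ depending only on $d,\underline{\theta},\overline{\theta},T,\underline{\eta}$ (the dependence on $c=2d$ is absorbed into the $d$-dependence). Since under the assumption $\lebesgue(\Lambda^0_\pm) \neq 0$ we have $\liminf_{\delta \to 0}\lebesgue(\Lambda^0_\pm) > 0$ (noting that $\Lambda^0_+$ is not varying with $\delta$ here), Corollary \ref{cor: rate} applies with $\beta = 1$ and delivers $\lvert \hat{\theta}_\pm - \theta^0_\pm \rvert = \mathcal{O}_{\PP}(\delta^{1/2})$, hence the final $L^1$ bound for $\hat{\theta}$ as claimed.

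There is essentially no obstacle; the proposition is a clean corollary of the general theory. The only step requiring a genuine argument is the identification $\Lambda^\updownarrow_+ \in \mathcal{A}_+$, which relies on the convexity-induced vertical connectedness of the slices so that the minimal tiling can be written as a union of hypercuboids rather than a more complicated stacked shape. Everything else is a substitution of the Lassak bound into the complexity hypothesis of the main theorem.
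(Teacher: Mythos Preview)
Your proposal is correct and mirrors the paper's own argument essentially verbatim: the paper verifies $\Lambda^\updownarrow_+\in\mathcal{A}_+$ from the vertical-strip description preceding \eqref{eq: representation set}, invokes the Lassak covering bound $\lvert\mathcal{B}\rvert<2d\delta^{-d+1}$, and then appeals directly to Theorem \ref{theo:rate_cp} and Corollary \ref{cor: rate} with $\beta=1$ and $c=2d$. There is no additional content in the paper's proof beyond what you outline.
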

\begin{remark} 
\label{rmk: opt_convex}
For an indication of optimality of the domain convergence rate, let us again consider the regular design image reconstruction problem 
\[Y_\alpha = \theta(x_{\alpha}) + \varepsilon_{\alpha}, \quad \alpha \in [n]^d,\]
from Remark \ref{rem:opti}. Let $\Lambda_+^0 \subset (0,1)^d$ be an open hypercube with volume $v_0 \in (0,1)$ and edge length $l_0 = v_0^{1/d}$ such that the corners of the hypercube lie on $\{x_\alpha\}_{\alpha \in [n]^{d}}$ and let $\Lambda^1_+$ be the open hypercube containing $\Lambda^0_+$  such that $d_\infty(\Lambda_+^0,\Lambda_+^1) = \delta/2$, where $\delta$ is chosen small enough s.t.\ $\Lambda_+ \subset (0,1)^d$. Then $x_{\alpha} \in \Lambda_+^0$ if and only if $x_{\alpha} \in \Lambda_+^1$ and therefore $\PP_{\Lambda_+^0} = \PP_{\Lambda_+^1}$. Moreover, 
\[\lebesgue(\Lambda_+^0 \vartriangle \Lambda^1_+) = (l_0 + \delta/2)^d - l_0^d \geq \frac{d}{2}l_0^{d-1}\delta =  \delta \frac{d}{2}v_0^{\frac{d-1}{d}}, \]
yielding the minimax lower bound
\[ \inf_{\hat{\Lambda}_+}\sup_{\Lambda_+ \in \mathcal{C}(v_0)}  \E_{\Lambda_+}\big[\lebesgue(\hat{\Lambda}_+ \vartriangle \Lambda_+)\big] \gtrsim C(v_0)\delta,\]
 for the class $\mathcal{C}(v_0)$ of convex sets in $[0,1]^d$ with volume at least $v_0 \in (0,1)$.
\end{remark}
The constructed estimator $\hat{\Lambda}_+$ has the drawback of generally not being a convex or even connected set. However, we can easily transform our estimator $\hat{\Lambda}_+$ into a convex estimator $\hat{\Lambda}{}^{\mathrm{con}}_+$ that converges at the same rate. To this end we employ a minimum distance fit by choosing an estimator $\hat{\Lambda}{}^{\mathrm{con}}_+$ such that
\[\lebesgue\big(\hat{\Lambda}{}^{\mathrm{con}}_+ \vartriangle \hat{\Lambda}_+\big) \leq \inf_{C \in \mathcal{K}_d} \lebesgue\big(C \vartriangle \hat{\Lambda}_+ \big) + \delta,\]
where 
\[\mathcal{K}_d \coloneqq \big\{C \subset[0,1]^d : C \text{ is convex and closed}\big\}.\]
Up to a $\delta$-margin, $\hat{\Lambda}^{\mathrm{con}}_+$ is therefore  a convex set with maximum volume overlap with $\hat{\Lambda}_+$. 
\begin{corollary}\label{coro:conv}
The convex estimator $\hat{\Lambda}{}^{\mathrm{con}}_+$ satisfies  
\[\E\big[\lebesgue(\hat{\Lambda}{}^{\mathrm{con}}_+ \vartriangle \Lambda^0_+) \big] \leq (2C+1)\delta,\]
where $C$ is the constant from Proposition \ref{prop:conv}.
\end{corollary}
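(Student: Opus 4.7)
The plan is to reduce the analysis of $\hat{\Lambda}^{\mathrm{con}}_+$ to the already established rate for $\hat{\Lambda}_+$ by using the triangle inequality for the symmetric difference pseudometric together with the fact that the truth $\Lambda^0_+$ is itself a feasible competitor in the minimum distance fit. Concretely, for any three measurable sets $A,B,C$ one has $\lebesgue(A\vartriangle C)\leq \lebesgue(A\vartriangle B)+\lebesgue(B\vartriangle C)$, so applying this with $A=\hat{\Lambda}^{\mathrm{con}}_+$, $B=\hat{\Lambda}_+$, $C=\Lambda^0_+$ yields
\[
\lebesgue\big(\hat{\Lambda}^{\mathrm{con}}_+\vartriangle\Lambda^0_+\big)\leq \lebesgue\big(\hat{\Lambda}^{\mathrm{con}}_+\vartriangle\hat{\Lambda}_+\big)+\lebesgue\big(\hat{\Lambda}_+\vartriangle\Lambda^0_+\big).
\]

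Next I would exploit that $\Lambda^0_+$ is convex and, by Assumption~3.1, open in $[0,1]^d$, so that $\overline{\Lambda^0_+}$ is a closed convex subset of $[0,1]^d$, i.e.\ $\overline{\Lambda^0_+}\in\mathcal{K}_d$. Hence $\overline{\Lambda^0_+}$ is a legitimate candidate in the infimum defining $\hat{\Lambda}^{\mathrm{con}}_+$, which gives
\[
\lebesgue\big(\hat{\Lambda}^{\mathrm{con}}_+\vartriangle\hat{\Lambda}_+\big)\leq \inf_{C\in\mathcal{K}_d}\lebesgue\big(C\vartriangle\hat{\Lambda}_+\big)+\delta\leq \lebesgue\big(\overline{\Lambda^0_+}\vartriangle\hat{\Lambda}_+\big)+\delta.
\]
Since $\partial\Lambda^0_+$ is the boundary of a convex set and therefore has Lebesgue measure zero, $\lebesgue(\overline{\Lambda^0_+}\vartriangle\hat{\Lambda}_+)=\lebesgue(\Lambda^0_+\vartriangle\hat{\Lambda}_+)$, so combining the two displays yields the pathwise bound
\[
\lebesgue\big(\hat{\Lambda}^{\mathrm{con}}_+\vartriangle\Lambda^0_+\big)\leq 2\lebesgue\big(\hat{\Lambda}_+\vartriangle\Lambda^0_+\big)+\delta.
\]

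Taking expectations and invoking Proposition~\ref{prop:conv}, which gives $\E[\lebesgue(\hat{\Lambda}_+\vartriangle\Lambda^0_+)]\leq C\delta$, immediately produces the claimed $(2C+1)\delta$ bound. There is essentially no technical obstacle here: the only two small points requiring attention are verifying that $\overline{\Lambda^0_+}$ indeed belongs to $\mathcal{K}_d$ (so that it can compete with $\hat{\Lambda}^{\mathrm{con}}_+$ in the minimum distance fit) and that passing between $\Lambda^0_+$ and $\overline{\Lambda^0_+}$ is free of charge with respect to Lebesgue measure because $\partial\Lambda^0_+$ is Lebesgue-null. Both are standard facts from convex geometry, so the argument is short.
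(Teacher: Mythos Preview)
Your proof is correct and follows essentially the same route as the paper's: triangle inequality for the symmetric difference pseudometric, then using the (closure of the) true set $\Lambda^0_+$ as a competitor in the minimum distance fit to bound $\lebesgue(\hat{\Lambda}^{\mathrm{con}}_+\vartriangle\hat{\Lambda}_+)$ by $\lebesgue(\Lambda^0_+\vartriangle\hat{\Lambda}_+)+\delta$, and concluding via Proposition~\ref{prop:conv}. You are in fact slightly more careful than the paper in noting that one should pass to $\overline{\Lambda^0_+}\in\mathcal{K}_d$ and that $\partial\Lambda^0_+$ is Lebesgue-null.
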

\begin{proof}
By triangle inequality for the symmetric difference pseudometric, it follows that 
\begin{align*}
\E\big[\lebesgue\big(\Lambda_+^0 \vartriangle \hat{\Lambda}{}^{\mathrm{con}}_+\big) \big] &\leq \E\big[\lebesgue\big(\hat{\Lambda}{}^{\mathrm{con}}_+ \vartriangle \hat{\Lambda}_+\big) \big] + \E\big[\lebesgue\big(\Lambda_+^0 \vartriangle \hat{\Lambda}_+\big) \big] \\ 
&\leq 2 \E\big[\lebesgue\big(\Lambda_+^0 \vartriangle \hat{\Lambda}_+\big) \big] +\delta,
\end{align*}
where we used that 
\[\lebesgue\big(\hat{\Lambda}{}^{\mathrm{con}}_+ \vartriangle \hat{\Lambda}_+\big) \leq \lebesgue\big(\Lambda_+^0 \vartriangle \hat{\Lambda}_+\big) + \delta\]
since $\Lambda_+^0$ is convex. The assertion therefore follows from Proposition \ref{prop:conv}.
\end{proof}

The convex estimator $\hat{\Lambda}^{\mathrm{con}}_+$ comes with the additional benefit of converging with respect to other distances than the  symmetric difference volume. Recall from before that
\[d_H(A,B) \coloneqq \max\big\{\sup_{x \in B} d(x,A), \sup_{x \in A} d(x,B) \big\}, \quad A,B \subset \R^d,\] 
denotes the Hausdorff distance on $\R^d$ and let  
\[\Delta_s(A, B) \coloneqq \mathcal{H}^{d-1}(\partial(A \cup B)) - \mathcal{H}^{d-1}(\partial(A \cap B)), \quad A,B \subset \R^d,\] 
be the symmetric area deviation, where $\mathcal{H}^{d-1}$ denotes the $(d-1)$-dimensional Hausdorff measure. In \cite{groemer00} it is shown that for any two compact convex sets $K,L \subset \R^d$ of diameter less than $1$ such that $(K \cap L)^\circ \neq 0$ it holds that 
\begin{equation}\label{eq:hausdorff_conv}
\Delta_s(K,L) \leq c(d) d_H(K,L) \leq C(d) \lebesgue(K \vartriangle L)^{1/d},
\end{equation}
for some constants $c(d),C(d)$ only depending on $d$. Combining \eqref{eq:hausdorff_conv} with Corollary \ref{coro:conv} then yields the following consistency result.

\begin{corollary}\label{coro:conv2}
It holds that $d_H(\hat{\Lambda}_+^{\mathrm{con}}, \Lambda^0_+) \overset{\PP}{\longrightarrow} 0$ and $\Delta_s(\hat{\Lambda}_+^{\mathrm{con}}, \Lambda^0_+) \overset{\PP}{\longrightarrow} 0$.
\end{corollary}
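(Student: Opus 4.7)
The plan is to derive both convergences in probability from the $L^1$-rate obtained in Corollary \ref{coro:conv} by invoking the geometric comparison inequality \eqref{eq:hausdorff_conv}. As a first step, Markov's inequality applied to Corollary \ref{coro:conv} yields
\[\lebesgue\big(\hat{\Lambda}_+^{\mathrm{con}} \vartriangle \Lambda_+^0\big) \overset{\PP}{\longrightarrow} 0.\]
I would then like to directly plug $K = \hat{\Lambda}_+^{\mathrm{con}}$ and $L = \Lambda_+^0$ into \eqref{eq:hausdorff_conv} and conclude by continuous mapping.

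The hard part will be verifying the hypothesis $(K \cap L)^\circ \neq \varnothing$ under which \eqref{eq:hausdorff_conv} is stated; the accompanying diameter constraint is not a real obstruction, since rescaling $[0,1]^d$ to a cube of diameter less than $1$ only modifies the dimensional constants to some $c'(d), C'(d)$ without altering the form of the inequality. For the interior-non-degeneracy I would work under the implicit standing assumption that $v_0 \coloneqq \lebesgue(\Lambda_+^0) > 0$ (the case $v_0 = 0$ being outside the scope of both the stated inequality and any meaningful shape recovery) and introduce the event
\[E_\delta \coloneqq \big\{\lebesgue\big(\hat{\Lambda}_+^{\mathrm{con}} \vartriangle \Lambda_+^0\big) \leq v_0/2 \big\},\]
for which $\PP(E_\delta) \to 1$ by the first step. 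On $E_\delta$ one has
\[\lebesgue\big(\hat{\Lambda}_+^{\mathrm{con}} \cap \Lambda_+^0\big) \geq \lebesgue\big(\Lambda_+^0\big) - \lebesgue\big(\Lambda_+^0 \setminus \hat{\Lambda}_+^{\mathrm{con}}\big) \geq v_0/2 > 0,\]
and since $\hat{\Lambda}_+^{\mathrm{con}} \cap \Lambda_+^0$ is an intersection of two convex sets and hence itself convex, having positive Lebesgue measure forces it to have non-empty interior.

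On $E_\delta$ the inequality \eqref{eq:hausdorff_conv} therefore applies and chains into
\[\Delta_s\big(\hat{\Lambda}_+^{\mathrm{con}}, \Lambda_+^0\big) \leq c'(d)\, d_H\big(\hat{\Lambda}_+^{\mathrm{con}}, \Lambda_+^0\big) \leq C'(d)\, \lebesgue\big(\hat{\Lambda}_+^{\mathrm{con}} \vartriangle \Lambda_+^0\big)^{1/d}.\]
A standard splitting argument then concludes the proof: for any $\varepsilon > 0$,
\[\PP\big(d_H(\hat{\Lambda}_+^{\mathrm{con}}, \Lambda_+^0) > \varepsilon\big) \leq \PP(E_\delta^{\mathrm{c}}) + \PP\big(\lebesgue(\hat{\Lambda}_+^{\mathrm{con}} \vartriangle \Lambda_+^0) > (\varepsilon/C'(d))^d\big) \underset{\delta \to 0}{\longrightarrow} 0,\]
by $\PP(E_\delta) \to 1$ and Corollary \ref{coro:conv}, and the $\Delta_s$-convergence follows identically upon replacing $C'(d)$ with $c'(d) C'(d)$. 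No probabilistic ingredients beyond Corollary \ref{coro:conv} enter, so the whole argument is essentially a deterministic convex-geometric consequence of the $L^1$-rate.
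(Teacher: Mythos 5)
Your argument is correct and follows exactly the route the paper intends: the corollary is obtained by combining the $L^1$-rate of Corollary \ref{coro:conv} (via Markov's inequality) with the convex-geometry inequality \eqref{eq:hausdorff_conv}. Your additional care in rescaling to meet the diameter constraint and in verifying $(\hat{\Lambda}_+^{\mathrm{con}} \cap \Lambda_+^0)^\circ \neq \varnothing$ on a high-probability event simply makes explicit the hypotheses the paper leaves implicit, so there is nothing to correct.
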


Given the estimator $\hat{\Lambda}_+$, numerical implementation of  $\hat{\Lambda}^{\mathrm{con}}_+$ can be conducted with methods for convexity constrained image segmentation based on the binary image input $\{(x_\alpha,\one_{\hat{\Lambda}_+}(x_\alpha)): \alpha \in [n]^d\}$, see, e.g., the recent implicit representation approach in \cite{schneider24}. 

Finally, let us consider the special case when  $\Lambda^0_+$ is a hypercuboid anchored on $\mathcal{P}$. 
Then, Theorem \ref{theo:tiling} yields the following corollary.
\begin{corollary}\label{coro:conv3}
Suppose that for any $\delta$, $\Lambda^0_+$ is a hypercuboid in $\mathcal{P}(\delta)$ such that Assumption \ref{ass:bounds_domain} is satisfied. Then, 
\[\lim_{\delta \to 0} \PP\Big(\hat{\Lambda}^\ast_+ = \overline{\Lambda}^0_+\Big) = 1.\]
If, moreover, Assumption \ref{ass:volume_conv} is satisfied, then 
\[\delta^{-(d/2+1)}\big(\hat{\theta}^\ast_\pm - \theta^0_\pm) \overset{d}{\longrightarrow} \mathcal{N}\left(0, \frac{2\theta^0_\pm}{T\lVert \nabla K\rVert^2_{L^2}v^0_\pm} \right).\]
\end{corollary}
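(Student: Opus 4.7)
The plan is to deduce both claims as direct consequences of Theorem \ref{theo:tiling} and Theorem \ref{theo:clt} applied to the candidate family $\mathcal{A}_+$ from \eqref{eq: representation set}. The proof therefore reduces to verifying three prerequisites: (a) $\overline{\Lambda^0_+} = \Lambda^\updownarrow_+$, i.e.\ $\Lambda^0_+$ is a tiling set with $\overline{\Lambda^0_+} \in \mathcal{A}_+$; (b) Assumption \ref{ass:bounds_domain}, which is imposed in the hypotheses; and (c) Assumption \ref{ass:candidates}, the polynomial growth of $\lvert \mathcal{A}_+\rvert$. All three will turn out to be immediate, so the real work has already been done in Section \ref{sec:tiling}.

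First I would note that since $\Lambda^0_+$ is assumed to be a hypercuboid anchored on $\mathcal{P}(\delta)$ and open in $[0,1]^d$ by the standing assumption, I can write $\Lambda^0_+ = \prod_{i=1}^d (a_i,b_i)$ with $a_i,b_i \in \{0,\delta,\ldots,1\}$. Its closure $\overline{\Lambda^0_+}$ is then precisely the union of $\delta$-tiles it contains, so $\overline{\Lambda^0_+} = (\Lambda^0_+)^+ = \Lambda^\updownarrow_+$, settling the tiling set property. The vertical decomposition $\overline{\Lambda^0_+} = \bigcup_{\gamma \in J} \Sq_{d-1}(\gamma) \times [a_d,b_d]$ with $J = \{\gamma \in [n]^{d-1} : \Sq_{d-1}(\gamma) \subset \prod_{i=1}^{d-1}[a_i,b_i]\}$ confirms $\overline{\Lambda^0_+} \in \mathcal{A}_+$. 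For (c), the paragraph preceding Proposition \ref{prop:conv} already noted that $\lvert \mathcal{A}_+ \rvert = n^{d-1}((n+n^2)/2+1) \asymp \delta^{-(d+1)}$, manifestly polynomial in $\delta^{-1}$.

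With these verifications in place, Theorem \ref{theo:tiling} immediately gives the identification statement $\lim_{\delta \to 0} \PP(\hat{\Lambda}^\ast_+ = \overline{\Lambda^0_+}) = 1$, and combining it with Assumption \ref{ass:volume_conv}, Theorem \ref{theo:clt} delivers the CLT. There is essentially no obstacle here: the substantive work---the uniform control of the empirical process $Z_\delta$ over $\mathcal{A}_+$ (Lemma \ref{lem:order_frac}, Corollary \ref{coro:order_frac}), the resolution of the $\Lambda^0_+ \leftrightarrow \Lambda^0_-$ allocation ambiguity via the auxiliary consistent estimator $\hat{\Lambda}_+$ from Section \ref{sec:boundary complexity}, and the martingale CLT for $\theta^{\Lambda^0_+}_\pm$---has already been carried out. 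It is worth emphasizing that the strip-based parameterization of $\mathcal{A}_+$ in \eqref{eq: representation set} is used precisely so that the polynomial-size hypothesis in Assumption \ref{ass:candidates} is satisfied; the naive alternative $\mathcal{A}_+ = \{C^+ : C \subset [0,1]^d \text{ convex}\}$ would be far too large to invoke Theorem \ref{theo:tiling}, which is the reason why perfect identification here is contingent on the specific hypercuboid structure rather than general convexity.
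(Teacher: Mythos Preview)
Your proof is correct and follows exactly the approach the paper intends: the corollary is stated right after the sentence ``Then, Theorem \ref{theo:tiling} yields the following corollary,'' and your verification of the three prerequisites---$\overline{\Lambda^0_+}=\Lambda^\updownarrow_+\in\mathcal{A}_+$, Assumption \ref{ass:bounds_domain}, and the polynomial size $\lvert\mathcal{A}_+\rvert\asymp\delta^{-(d+1)}$ already recorded before Proposition \ref{prop:conv}---is precisely what is needed to invoke Theorems \ref{theo:tiling} and \ref{theo:clt}.
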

\begin{remark} 
If $\Lambda^0_+$ is not a half-space in $[0,1]^d$, the estimators $\hat{\Lambda}{}^\ast_+$ and $\hat{\theta}^\ast_\pm$ may be replaced by $\tilde{\Lambda}_+$ and $\tilde{\theta}_\pm$ in the statement, since then $\Lambda^0_-$ cannot be a hypercuboid as well.
\end{remark}

\section{Conclusion and outlook}
\label{sec: summary}
Before discussing limitations and possible extensions of our work, let us briefly summarize our results. We have studied a change estimation problem for a stochastic heat equation \eqref{eq: intro SPDE} in $d\geq2.$ The underlying space is partitioned into $\Lambda_-\cup \Lambda_+$ by a separating hypersurface $\Gamma= \partial\Lambda_+$, where the piecewise constant diffusivity $\theta$ exhibits a jump. Following a modified likelihood approach, we have constructed an M-estimator $\hat{\Lambda}_+$ based on local measurements on a fixed uniform $\delta$-grid that exhibits certain analogies to regular design estimators in statistical image reconstruction. Our main results, Theorem \ref{theo:rate_cp} and Theorem \ref{theo:tiling}, show how the convergence properties of our estimator are determined by the number of tiles that are sliced by $\Gamma$. The estimation principle and rates are made concrete for two specific models that impose shape restrictions on the change domain $\Lambda_+$:  (A) a graph representation of $\Gamma$  with Hölder smoothness $\beta\in(0,1],$ and (B) a convex shape. We have  established the rates of convergences $\delta^\beta$ for model A and $\delta$ for model B with respect to the symmetric difference risk $\E[\lebesgue(\hat{\Lambda}_+\vartriangle\Lambda_+)]$, which are the optimal rates of convergence in the corresponding image reconstruction problems with regular design. Furthermore, the diffusivity parameters can be recovered with rate $\delta^{\beta/2}$ and $\delta^{1/2}$, respectively, which substantially improves to minimax optimal rates $\delta^{d/2+1}$ induced by the CLT in Theorem \ref{theo:clt} if the graph and the convex set are anchored on the $\delta$-grid. To conclude the paper, let us now give an outlook on potential future work that can build on our results.

Based on Theorem \ref{theo:rate_cp}, an extension of Proposition \ref{prop:interface} to a known number $m$ of change interfaces $\tau_1,\dots,\tau_m$ that yields a partition $\overline{\Lambda} =\bigcup_{0\leq i\leq m}\Lambda_i$ of layers $\Lambda_i$ with alternating diffusivities $\theta_\pm$, is straightforward. Assuming that each $\tau_m$ belongs to $\mathcal{H}(\beta,L)$, the same rate of convergence can be established. 
A more challenging model extension would allow a partition $\overline{\Lambda} = \bigcup_{0 \leq i \leq m} \Lambda_i$ with $\theta \equiv \theta_i$ on $\Lambda_i$ and $\theta_i \neq \theta_j$ for $j \neq i$, where the number $m$ of layers (for instance, sediment layers) is unknown. This model extension is unproblematic from an SPDE perspective, but poses additional statistical challenges.

In this paper, we have fixed the parameters $\theta_\pm$ and therefore also the absolute jump height $\eta=\lvert\theta_+-\theta_-\rvert$. 
The vanishing jump height regime $\eta \to 0$ as $\delta \to 0$ that has been considered for the one-dimensional change point problem in \cite{reiß_2023_change}, introduces significant technical challenges that require a sharper concentration analysis. Similarly to how the limit result from \cite{reiß_2023_change} in this regime draws analogies to classical change point limit theorems, in the multivariate case one would expect asymptotics that are comparable to \cite{mueller_indexed_96}. 

As mentioned in Remark \ref{rem:opti} and Remark \ref{rmk: opt_convex}, the convergence rates for $N^{-\beta/d}$ and $N^{-1/d}$, respectively, are optimal in the related image reconstruction problem when working with a regular design. However, as alluded to before, it is shown in \cite{tsy93} that the minimax optimal rate for irregular measurement designs that introduce a certain level of randomness is given by $N^{-\beta/(\beta+d-1)}$ for arbitrary $\beta>0$, which allows to exploit higher-order smoothness for change interface estimation. Appropriately introducing such randomness in the measurement locations $x_\alpha$ of the local observation scheme while preserving favorable probabilistic properties such as independence of the associated Brownian motions $B_{\delta,\alpha}$ is a conceptually interesting problem.

Finally, let us reiterate that we have mainly focused on optimal change domain estimation for regular local measurements and put less emphasis on rate-optimal estimators for the diffusivity parameters $\theta_\pm$. For the binary image reconstruction model with regression function  $\theta(x)=\theta_+\bm{1}_{\Lambda_+}(x)+\theta_-\bm{1}_{\Lambda_-}(x),$ \cite[Theorem 5.1.2]{tsy93} establishes the usual parametric  rate $N^{-1/2}=\delta^{d/2}$ for $\theta_\pm$. On the other hand, \cite{altmeyer_anisotrop2021} prove the minimax rate $\delta^{d/2+1}$ for a constant diffusivity, which we also obtain in the mathematically idealized setting of Theorem \ref{theo:clt} where the domain $\Lambda_+^0$ is anchored on the $\delta$-grid. This demonstrates a significant difference between heat diffusivity and image estimation and can also explain the stronger identification result of $\Lambda_+^0$, i.e. Theorem \ref{theo:tiling}, compared to ones in the literature, e.g. \cite{mueller_cube_94,mueller_indexed_96}. While in the one-dimensional change point problem recovery of $\theta_\pm$ at optimal rate $\delta^{d/2+1}$ is generally enabled by the introduction of an additional nuisance parameter $\theta_\circ$ that reduces the bias from a constant approximation $\theta \equiv \theta_\circ$ on a proposed spatial change interval, cf.\ \cite[Theorem 3.12]{reiß_2023_change}, it remains an open problem for future work to construct an optimal estimator for $\theta_\pm$ in the general multivariate framework of Section \ref{sec:boundary complexity}.

\section{Remaining proofs}\label{app:proofs}
\begin{proof} [Proof of Lemma \ref{lem: orderrest}]
By Proposition \ref{lem: orderFisher} we know that $\E[I_{\delta,\alpha}] \lesssim \delta^{-2}$ for any $\alpha \in [n]^d$. 
Since 
\[R_{\delta,\alpha} = \int_0^T X^\Delta_{\delta,\alpha}(t) \int_0^t \langle \Delta_{\theta^0} S_{\theta^0}(t-s) K_{\delta,\alpha}, \diff{W(s)} \rangle \diff{t} - \theta_+^0I_{\delta,\alpha},\]
it is enough to show 
\begin{equation}\label{eq:rem1}
\E\Big[\Big\lvert\int_0^T X^\Delta_{\delta,\alpha}(t) \int_0^t \langle \Delta_{\theta^0} S_{\theta^0}(t-s) K_{\delta,\alpha}, \diff{W(s)} \rangle \diff{t} \Big\rvert \Big] \lesssim \delta^{-2}.
\end{equation}
We have 
\begin{align*} 
&\E\Big[\Big\lvert\int_0^T X^\Delta_{\delta,\alpha}(t) \int_0^t \langle \Delta_{\theta^0} S_{\theta^0}(t-s) K_{\delta,\alpha}, \diff{W(s)} \rangle \diff{t} \Big\rvert \Big]\\
&\quad\leq \E\big[ I_{\delta,\alpha}\big]^{1/2} \E\Big[\int_0^T \Big(\int_0^t \langle \Delta_{\theta^0} S_{\theta^0}(t-s) K_{\delta,\alpha}, \diff{W(s)} \rangle\Big)^2 \diff{t}  \Big]^{1/2} \\
&\quad\lesssim \delta^{-1} \Big(\E\Big[\int_0^T \Big(\int_0^t \langle \Delta_{\theta^0} S_{\theta^0}(t-s) K_{\delta,\alpha}, \diff{W(s)}\rangle\Big)^2  \diff{t}  \Big]\Big)^{1/2}\\
&\quad= \delta^{-1} \Big(\int_0^T \int_0^t \big\lVert \Delta_{\theta^0}S_{\theta^0}(t-s) K_{\delta,\alpha} \big\rVert^2 \diff{s} \diff{t} \Big)^{1/2},
\end{align*}
where we used the Cauchy--Schwarz inequality for the first two inequalities and Fubini's theorem together with the Itô-isometry for the last line. Since 
\begin{align*} 
\int_0^T \int_0^t \big\lVert \Delta_{\theta^0}S_{\theta^0}(t-s) K_{\delta,\alpha} \big\rVert^2 \diff{s} \diff{t} &\leq T\int_0^T \big\lVert \Delta_{\theta^0}S_{\theta^0}(t) K_{\delta,\alpha} \big\rVert^2 \diff{t}\\
&= T\sum_{k \in \N} \int_0^T \lambda_k^2 \mathrm{e}^{-2\lambda_k t} \diff{t} \langle e_k,K_{\delta,\alpha} \rangle^2 \\
&\leq \frac{T}{2} \sum_{k \in \N} \lambda_k \langle e_k,K_{\delta,\alpha} \rangle^2\\ 
&= \frac{T}{2} \big\lVert (-\Delta_{\theta^0})^{1/2} K_{\delta,\alpha} \big\rVert^2\\ 
&= \frac{T}{2} \int_{\Lambda} \theta^0(x) \lvert \nabla K_{\delta,\alpha}(x) \rvert^2 \diff{x}\\ 
&\leq \frac{T\overline{\theta}}{2} \lVert \nabla K_{\delta,\alpha} \rVert^2_{L^2(\R^d)}\\
&=\delta^{-2}\frac{T \overline{\theta}\norm{\nabla K}^2_{L^2(\R^d)}}{2} ,
\end{align*}
the bound \eqref{eq:rem1} follows, proving the assertion.
\end{proof}

\begin{proof}[Proof of Corollary \ref{cor: rate}]
We only prove the assertion on $\hat{\theta}_+$ given $\liminf_{\delta \to 0} \lebesgue(\Lambda^0_+) > 0$; the case for $\hat{\theta}_-$ under the assumption $\liminf_{\delta \to 0} \lebesgue(\Lambda^0_-) > 0$ is analogous and the final statement on the convergence rate of $\hat{\theta}$ then follows from combining the first statement and Theorem \ref{theo:rate_cp} based on the inequality
\[\lvert \hat{\theta}(x) - \theta^0(x) \rvert \leq \lvert \hat{\theta}_- - \theta^0_- \rvert + \lvert \hat{\theta}_+ - \theta^0_+ \rvert +2 \overline{\theta} \lvert \one_{\hat{\Lambda}_+}(x) - \one_{\Lambda_+^0}(x) \rvert, \quad x \in (0,1)^{d}. \]
Let $\kappa = \lebesgue(\Lambda^0_+) > 0$. From \eqref{eq:tiling_err}  it follows that on the event $\{\lebesgue(\hat{\Lambda}_+ \vartriangle \Lambda^0_+) \leq \delta^{\beta/2}\}$ we have for $\delta$ small enough
\[\frac{\kappa}{2} \leq \lebesgue(\hat{\Lambda}_+ \cap \Lambda^\updownarrow_+) = \delta^d \big\lvert \big\{\alpha \in [n]^d: \Sq^\circ(\alpha) \subset (\hat{\Lambda}_+ \cap \Lambda^\updownarrow_+)\big\}\big\rvert,\]
where the equality follows from the fact that for $A,B \in \mathcal{P}$ we also have $A \cap B \in \mathcal{P}$.
Thus, for $\delta$ small enough, it follows from the second line of the calculation in \eqref{eq:low_exp} that on the event $\{\lebesgue(\hat{\Lambda}_+ \vartriangle \Lambda^0_+) \leq \delta^{\beta/2}\}$ we have 
\[\tilde{L}_\delta(\hat{\chi}) - \tilde{L}_\delta(\chi^0) \geq \frac{\kappa}{2}(\hat{\theta}_+ -\theta^0_+)^2.\]
Consequently, there exists $C^\prime > 0$ such that for any $\delta^{-1} \geq C^\prime$ and $z > 0$, 
\begin{align*} 
&\PP\big(\delta^{-\beta}(\hat{\theta}_+ -\theta^0_+)^2 \geq z\big)\\ 
&\,\leq \PP\Big(\delta^{-\beta}(\hat{\theta}_+ -\theta^0_+)^2  \geq z, \lebesgue(\hat{\Lambda}_+ \vartriangle \Lambda^0_+) \leq \delta^{\beta/2}\Big) + \PP\Big(\lebesgue(\hat{\Lambda}_+ \vartriangle \Lambda^0_+)  > \delta^{\beta/2}\Big)\\ 
&\,\leq \PP\big(\tilde{L}_\delta(\hat{\chi}) - \tilde{L}_\delta(\chi^0) \geq \tfrac{\kappa}{2}\delta^{\beta} z \big) + \PP\Big(\lebesgue(\hat{\Lambda}_+ \vartriangle \Lambda^0_+)  > \delta^{\beta/2}\Big)\\ 
&\,\leq 2\frac{\E[\tilde{L}_\delta(\hat{\chi}) - \tilde{L}_\delta(\chi^0)] }{\kappa\delta^\beta z} + \frac{\E\big[\lebesgue(\hat{\Lambda}_+ \vartriangle \Lambda^0_+) \big]}{\delta^{\beta/2}}\\ 
&\,\leq \tilde{C}\big(\tfrac{1}{z} + \delta^{\beta/2}\big),
\end{align*}
for some finite constant $\tilde{C}> 0$ independent of $\delta$ where the last line follows from \eqref{eq:cons_rate1} and Theorem \ref{theo:rate_cp}. Thus, if for given $\varepsilon > 0$ we choose $z = 2\tilde{C}/\varepsilon$, it follows that for any $\delta^{-1} \geq (2\tilde{C}/\varepsilon)^{-2/\beta} \vee C^\prime$ we have
\[\PP\big(\delta^{-\beta}(\hat{\theta}_+ -\theta^0_+)^2 \geq z\big) \leq \varepsilon,\]
which establishes $(\hat{\theta}_+ -\theta^0_+)^2 = \mathcal{O}_{\PP}(\delta^\beta)$ as $\delta^{-1} \to \infty$.
\end{proof}

\begin{proof}[Proof of \eqref{eq:Z_split}]
    Using Proposition \ref{prop: genweaksol}, the domain-specific estimators defined in \eqref{eq: theta_augm} admit the decomposition 
\begin{align}
    \label{eq: estdecomp1}
    (\theta_+^{\Lambda_+}-\theta_+^0)&=-\eta\frac{\sum_{\Sq(\alpha)^\circ \subset\Lambda_+\cap\Lambda_-^0}I_{\delta,\alpha}}{\sum_{\Sq(\alpha)^\circ \subset\Lambda_+}I_{\delta,\alpha}}+\frac{\sum_{\Sq(\alpha)^\circ \subset\Lambda_+}M_{\delta,\alpha}}{\sum_{\Sq(\alpha)^\circ \subset\Lambda_+}I_{\delta,\alpha}}\\
    \label{eq: estdecomp2}
    (\theta_+^{\Lambda_+}-\theta_-^0)&=\eta\frac{\sum_{\Sq(\alpha)^\circ \subset\Lambda_+\cap\Lambda_+^0}I_{\delta,\alpha}}{\sum_{\Sq(\alpha)^\circ \subset\Lambda_+}I_{\delta,\alpha}}+\frac{\sum_{\Sq(\alpha)^\circ \subset\Lambda_+}M_{\delta,\alpha}}{\sum_{\Sq(\alpha)^\circ \subset\Lambda_+}I_{\delta,\alpha}}\\
    \label{eq: estdecomp3}
    (\theta_-^{\Lambda_+}-\theta_-^0)&=\eta\frac{\sum_{\Sq(\alpha)^\circ \subset\Lambda_-\cap\Lambda_+^0}I_{\delta,\alpha}}{\sum_{\Sq(\alpha)^\circ \subset\Lambda_-}I_{\delta,\alpha}}+\frac{\sum_{\Sq(\alpha)^\circ \subset\Lambda_-}M_{\delta,\alpha}}{\sum_{\Sq(\alpha)^\circ \subset\Lambda_-}I_{\delta,\alpha}}\\
    \label{eq: estdecomp4}
    (\theta_-^{\Lambda_+}-\theta_+^0)&=-\eta\frac{\sum_{\Sq(\alpha)^\circ \subset\Lambda_-\cap\Lambda_-^0}I_{\delta,\alpha}}{\sum_{\Sq(\alpha)^\circ \subset\Lambda_-}I_{\delta,\alpha}}+\frac{\sum_{\Sq(\alpha)^\circ \subset\Lambda_-}M_{\delta,\alpha}}{\sum_{\Sq(\alpha)^\circ \subset\Lambda_-}I_{\delta,\alpha}}.
\end{align}
We can also write 
\begin{align}
    \label{eq: processdecomp1}
    Z_{\delta}(\theta_-^{\Lambda_+},\theta_+^{\Lambda_+},\Lambda_+)&=\frac{1}{2}\sum_{\Sq(\alpha)^\circ \subset\Lambda_+\cap\Lambda_+^0}(\theta_+^{\Lambda_+}-\theta_+^0)^2I_{\delta,\alpha}+\frac{1}{2}\sum_{\Sq(\alpha)^\circ \subset \Lambda_+\cap\Lambda_-^0}(\theta_+^{\Lambda_+}-\theta_-^0)^2I_{\delta,\alpha}\\
    \label{eq: processdecomp3}
    &\phantom{=}+\frac{1}{2}\sum_{\Sq(\alpha)^\circ \subset\Lambda_-\cap\Lambda_-^0}(\theta_-^{\Lambda_+}-\theta_-^0)^2I_{\delta,\alpha}+\frac{1}{2}\sum_{\Sq(\alpha)^\circ \subset\Lambda_-\cap\Lambda_+^0}(\theta_-^{\Lambda_+}-\theta_+^0)^2I_{\delta,\alpha}\\
    \label{eq: processdecomp5}
    &\phantom{=}-\sum_{\Sq(\alpha)^\circ \subset\Lambda_+\cap\Lambda_+^0}(\theta_+^{\Lambda_+}-\theta_+^0)M_{\delta,\alpha}-\sum_{\Sq(\alpha)^\circ \subset\Lambda_+\cap\Lambda_-^0}(\theta_+^{\Lambda_+}-\theta_-^0)M_{\delta,\alpha}\\
    \label{eq: processdecomp7}
    &\phantom{=}-\sum_{\Sq(\alpha)^\circ \subset\Lambda_-\cap\Lambda_-^0}(\theta_-^{\Lambda_+}-\theta_-^0)M_{\delta,\alpha}-\sum_{\Sq(\alpha)^\circ \subset\Lambda_-\cap\Lambda_+^0}(\theta_-^{\Lambda_+}-\theta_+^0)M_{\delta,\alpha}
\end{align}
Plugging \eqref{eq: estdecomp1} and \eqref{eq: estdecomp2} into \eqref{eq: processdecomp1} yields the simplification
\begin{equation}
\label{eq: firstsum}
\begin{split}
    &\frac{1}{2}\sum_{\Sq(\alpha)^\circ \subset\Lambda_+\cap\Lambda_+^0}(\theta_+^{\Lambda_+}-\theta_+^0)^2I_{\delta,\alpha}+\frac{1}{2}\sum_{\Sq(\alpha)^\circ \subset\Lambda_+\cap\Lambda_-^0}(\theta_+^{\Lambda_+}-\theta_-^0)^2I_{\delta,\alpha}\\
    &\,=\frac{1}{2}\frac{(\sum_{\Sq(\alpha)^\circ \subset\Lambda_+}M_{\delta,\alpha})^2}{\sum_{\Sq(\alpha)^\circ \subset\Lambda_+}I_{\delta,\alpha}}+\frac{\eta^2}{2}\frac{\sum_{\Sq(\alpha)^\circ \subset\Lambda_+\cap\Lambda_+^0}I_{\delta,\alpha}\sum_{\Sq(\alpha)^\circ \subset\Lambda_+\setminus\Lambda_+^0}I_{\delta,\alpha}}{\sum_{\Sq(\alpha)^\circ \subset\Lambda_+}I_{\delta,\alpha}}.
    \end{split}
\end{equation}
Similarly, using \eqref{eq: estdecomp3} and \eqref{eq: estdecomp4} in \eqref{eq: processdecomp3} gives 
\begin{equation}
\label{eq: secondsum}
\begin{split}
    &\frac{1}{2}\sum_{\Sq(\alpha)^\circ \subset\Lambda_-\cap\Lambda_-^0}(\theta_-^{\Lambda_+}-\theta_-^0)^2I_{\delta,\alpha}+\frac{1}{2}\sum_{\Sq(\alpha)^\circ \subset\Lambda_-\cap\Lambda_+^0}(\theta_-^{\Lambda_+}-\theta_+^0)^2I_{\delta,\alpha}\\
    &\,=\frac{1}{2}\frac{(\sum_{\Sq(\alpha)^\circ \subset\Lambda_-}M_{\delta,\alpha})^2}{\sum_{\Sq(\alpha)^\circ \subset\Lambda_-}I_{\delta,\alpha}}+\frac{\eta^2}{2}\frac{\sum_{\Sq(\alpha)^\circ \subset\Lambda_-\cap\Lambda_-^0}I_{\delta,\alpha}\sum_{\Sq(\alpha)^\circ \subset\Lambda_+^0\setminus\Lambda_+}I_{\delta,\alpha}}{\sum_{\Sq(\alpha)^\circ \subset\Lambda_-}I_{\delta,\alpha}}.
\end{split}
\end{equation}
As for the martingale expressions \eqref{eq: processdecomp5} we obtain
\begin{equation}
\label{eq: thirdsum}
\begin{split}
    &\sum_{\Sq(\alpha)^\circ \subset\Lambda_+\cap\Lambda_+^0}(\theta_+^{\Lambda_+}-\theta_+^0)M_{\delta,\alpha}+\sum_{\Sq(\alpha)^\circ \subset\Lambda_+\cap\Lambda_-^0}(\theta_+^{\Lambda_+}-\theta_-^0)M_{\delta,\alpha}\\
    &\quad=\frac{(\sum_{\Sq(\alpha)^\circ \subset\Lambda_+}M_{\delta,\alpha})^2}{\sum_{\Sq(\alpha)^\circ \subset\Lambda_+}I_{\delta,\alpha}}\\
    &\qquad+
    \frac{\eta}{\sum_{\Sq(\alpha)^\circ \subset\Lambda_+}I_{\delta,\alpha}}\left(\sum_{\Sq(\alpha)^\circ \subset\Lambda_+\cap\Lambda_-^0}M_{\delta,\alpha}\sum_{\Sq(\alpha)^\circ \subset\Lambda_+\cap \Lambda_+^0}I_{\delta,\alpha}-\sum_{\Sq(\alpha)^\circ \subset\Lambda_+\cap\Lambda_+^0}M_{\delta,\alpha}\sum_{\Sq(\alpha)^\circ \subset\Lambda_+\cap\Lambda_-^0}I_{\delta,\alpha}\right)
    \end{split}
\end{equation}
and likewise for \eqref{eq: processdecomp7}
\begin{equation}
\label{eq: forthsum}
\begin{split}
    &\sum_{\Sq(\alpha)^\circ \subset\Lambda_-\cap\Lambda_-^0}(\theta_-^{\Lambda_+}-\theta_-^0)M_{\delta,\alpha}+\sum_{\Sq(\alpha)^\circ \subset\Lambda_-\cap\Lambda_+^0}(\theta_-^{\Lambda_+}-\theta_+^0)M_{\delta,\alpha}\\
    &\quad=\frac{(\sum_{\Sq(\alpha)^\circ \subset\Lambda_-}M_{\delta,\alpha})^2}{\sum_{\Sq(\alpha)^\circ \subset\Lambda_-}I_{\delta,\alpha}}\\
    &\qquad+\frac{\eta}{\sum_{\Sq(\alpha)^\circ \subset\Lambda_-}I_{\delta,\alpha}}\left(\sum_{\Sq(\alpha)^\circ \subset\Lambda_-\cap\Lambda_+^0}I_{\delta,\alpha}\sum_{\Sq(\alpha)^\circ \subset\Lambda_-\cap\Lambda_-^0}M_{\delta,\alpha}-\sum_{\Sq(\alpha)^\circ \subset\Lambda_-\cap\Lambda_-^0}I_{\delta,\alpha}\sum_{\Sq(\alpha)^\circ \subset\Lambda_-\cap\Lambda_+^0}M_{\delta,\alpha}\right).
\end{split}
\end{equation}
Combining \eqref{eq: firstsum}, \eqref{eq: secondsum}, \eqref{eq: thirdsum} and \eqref{eq: forthsum} implies
\begin{equation*}
\begin{split}
    &Z_{\delta}(\theta_-^{\Lambda_+},\theta_+^{\Lambda_+},\Lambda_+)\\
    &\,=\frac{\eta^2}{2}\frac{\sum_{\Sq(\alpha)^\circ \subset\Lambda_+\cap\Lambda_+^0}I_{\delta,\alpha}\sum_{\Sq(\alpha)^\circ \subset\Lambda_+\setminus\Lambda_+^0}I_{\delta,\alpha}}{\sum_{\Sq(\alpha)^\circ \subset\Lambda_+}I_{\delta,\alpha}}+\frac{\eta^2}{2}\frac{\sum_{\Sq(\alpha)^\circ \subset\Lambda_-\cap\Lambda_-^0}I_{\delta,\alpha}\sum_{\Sq(\alpha)^\circ \subset\Lambda_+^0\setminus\Lambda_+}I_{\delta,\alpha}}{\sum_{\Sq(\alpha)^\circ \subset\Lambda_-}I_{\delta,\alpha}}\\
    &\,\phantom{=}-\frac{(\sum_{\Sq(\alpha)^\circ \subset\Lambda_+}M_{\delta,\alpha})^2}{2\sum_{\Sq(\alpha)^\circ \subset\Lambda_+}I_{\delta,\alpha}}-\frac{(\sum_{\Sq(\alpha)^\circ \subset\Lambda_-}M_{\delta,\alpha})^2}{2\sum_{\Sq(\alpha)^\circ \subset\Lambda_-}I_{\delta,\alpha}}\\
    &\,\phantom{=}-\frac{\eta}{\sum_{\Sq(\alpha)^\circ \subset\Lambda_+}I_{\delta,\alpha}}\left(\sum_{\Sq(\alpha)^\circ \subset\Lambda_+\cap\Lambda_-^0}M_{\delta,\alpha}\sum_{\Sq(\alpha)^\circ \subset\Lambda_+\cap \Lambda_+^0}I_{\delta,\alpha}-\sum_{\Sq(\alpha)^\circ \subset\Lambda_+\cap\Lambda_+^0}M_{\delta,\alpha}\sum_{\Sq(\alpha)^\circ \subset\Lambda_+\cap\Lambda_-^0}I_{\delta,\alpha}\right)\\
    &\,\phantom{=} -\frac{\eta}{\sum_{\Sq(\alpha)^\circ \subset\Lambda_-}I_{\delta,\alpha}}\left(\sum_{\Sq(\alpha)^\circ \subset\Lambda_-\cap\Lambda_+^0}I_{\delta,\alpha}\sum_{\Sq(\alpha)^\circ \subset\Lambda_-\cap\Lambda_-^0}M_{\delta,\alpha}-\sum_{\Sq(\alpha)^\circ \subset\Lambda_-\cap\Lambda_-^0}I_{\delta,\alpha}\sum_{\Sq(\alpha)^\circ \subset\Lambda_-\cap\Lambda_+^0}M_{\delta,\alpha}\right)
\end{split}
\end{equation*}
proving \eqref{eq:Z_split}.
\end{proof}

\begin{proof}[Proof of Lemma \ref{lem:order_frac}]
\begin{enumerate}
\item[(i)] We only prove the assertion for $\sup_{\Lambda_+ \in \mathcal{A}_+^\prime} \frac{\sum_{\Sq(\alpha) \subset \Lambda_+} M_{\delta,\alpha}}{\sum_{\Sq(\alpha) \subset \Lambda_+} I_{\delta,\alpha}}$, the other assertion follows similarly. Since $\E[I_{\delta,\alpha}] \asymp \delta^{-2}$, there is a  constant $C_1$ independent of $\delta$ such that 
\[\sum_{\Sq(\alpha) \subset \Lambda_+} \E[I_{\delta,\alpha}] \leq C_1 \delta^{-2} \lvert\{\alpha: \Sq(\alpha) \subset \Lambda_+\}\rvert,\] 
and thus
\begin{align*}
&\PP\bigg(\sup_{\Lambda_+ \in \mathcal{A}_+^\prime} \Big\lvert\frac{\sum_{\Sq(\alpha) \subset \Lambda_+} M_{\delta,\alpha}}{\sum_{\Sq(\alpha) \subset \Lambda_+} I_{\delta,\alpha}}\Big\rvert \geq z \bigg)\\
&\,\leq \PP\Big(\sup_{\Lambda_+ \in \mathcal{A}_+^\prime} \delta^{2}\lvert\{\alpha: \Sq(\alpha) \subset \Lambda_+\}\rvert^{-1} \big\lvert \sum_{\Sq(\alpha) \subset \Lambda_+} M_{\delta,\alpha} \big\rvert \geq C_1z/2\Big)\\
&\,\quad + \PP\Big(\sup_{\Lambda_+ \in \mathcal{A}_+^\prime} \delta^{2}\lvert\{\alpha: \Sq(\alpha) \subset \Lambda_+\}\rvert^{-1}\big\lvert \sum_{\Sq(\alpha) \subset \Lambda_+}  (I_{\delta,\alpha} - \E[I_{\delta,\alpha}])\big\rvert \geq C_1/2\Big).
\end{align*}
Let $\mathcal{A}_+^i \coloneqq \{\Lambda_+ \in \mathcal{A}_+^\prime: \delta^{-(i-1)} \leq \lvert\{\alpha: \Sq(\alpha) \subset \Lambda_+\}\rvert \leq \delta^{-i}\}$. For the first probability we get for some constants $C_i$, $i=2,3,4$, that are independent of $\delta$,
\begin{align*} 
&\PP\Big(\sup_{\Lambda_+ \in \mathcal{A}_+^\prime} \delta^{2}\lvert\{\alpha: \Sq(\alpha) \subset \Lambda_+\}\rvert \big\lvert^{-1} \sum_{\Sq(\alpha) \subset \Lambda_+} M_{\delta,\alpha} \big\rvert \geq C_1z/2\Big)\\
&\,\leq \sum_{i=1}^d \bigg\{\PP\Big(\sup_{\Lambda_+ \in \mathcal{A}_+^i} \delta^{i+1}\big\lvert \sum_{\Sq(\alpha) \subset \Lambda_+ }\overline{M}_{\delta,\alpha} \big\rvert \geq C_1z/4 \Big)  \\
&\,\quad\qquad + \PP\Big(\sup_{\Lambda_+ \in \mathcal{A}_+^i} \delta^{i+1} \sum_{\Sq(\alpha) \subset \Lambda_+} (M_{\delta,\alpha} - \overline{M}_{\delta,\alpha}) \big\rvert \geq C_1z/4,  \sup_{\Lambda_+ \in \mathcal{A}_+^i} \sum_{\Sq(\alpha) \subset \Lambda_+} \lvert I_{\delta,\alpha} - \E[I_{\delta,\alpha}] \rvert \leq L_i \Big)\\
&\,\quad\qquad + \PP\Big(\sup_{\Lambda_+ \in \mathcal{A}_+^i} \sum_{\Sq(\alpha) \subset \Lambda_+} \lvert I_{\delta,\alpha} - \E[I_{\delta,\alpha}] \rvert > L_i \Big) \bigg\}\\
&\,\leq \sum_{i=1}^d \bigg\{\PP\Big(\sup_{\Lambda_+ \in \mathcal{A}_+^i} \delta^{i+1} \big\lvert \sum_{\Sq(\alpha) \subset \Lambda_+ }\overline{M}_{\delta,\alpha} \big\rvert \geq C_1z/4 \Big)  \\
&\,\quad\qquad + \PP\Big(\sup_{\Lambda_+ \in \mathcal{A}_+^i} \delta^{i+1} \sum_{\Sq(\alpha) \subset \Lambda_+} (M_{\delta,\alpha} - \overline{M}_{\delta,\alpha}) \big\rvert \geq C_1z/4,  \sup_{\Lambda_+ \in \mathcal{A}_+^i} \sum_{\Sq(\alpha) \subset \Lambda_+} \lvert I_{\delta,\alpha} - \E[I_{\delta,\alpha}] \rvert \leq L_i \Big)\\
&\,\quad\qquad + \PP\Big(\delta^{-i}\sup_{\alpha \in [n]^d} \lvert I_{\delta,\alpha} - \E[I_{\delta,\alpha}] \rvert > L_i \Big) \bigg\}\\
&\,\lesssim  \sum_{i=1}^d \bigg\{\delta^{-d}\exp\Big(-C_3 \frac{L_i^2\delta^{2i}}{L_i\delta^{i} + C_4 \delta^{-2}}\Big) \\
&\qquad\quad+ \sum_{\Lambda_+ \in \mathcal{A}_+^i} \bigg( \exp\big(-C_2\delta^{-i}z^2 \big) + \exp\big(-\delta^{-2(1+i)}C_1^2z^2/(32L_i) \big) \bigg)  \bigg\}\\ 
&\,\lesssim \sum_{i=1}^d \bigg\{\delta^{-d}\exp\Big(-C_3 \frac{L_i^2\delta^{2i}}{L_i\delta^{i} + C_4 \delta^{-2}}\Big) + \delta^{-c}\bigg( \exp\big(-\delta^{-1}C_1^2z^2/32 \big) + \exp\Big(-\tfrac{\delta^{-2(1+i)}C_1^2z^2}{32L_i} \Big) \bigg) \bigg\},
\end{align*}
where we used Proposition \ref{lem: orderFisher} and Proposition \ref{prop:coupling} as well as a Gaussian tail inequality for the normally distributed random variables $\sum_{\Sq(\alpha) \subset \Lambda_+} \overline{M}_{\delta,\alpha}$ for the penultimate line.
Then, choosing $L_i = R_L\delta^{-(1+i)} \sqrt{\log (\delta^{-1})}$ and  $z = z_L = R_L\sqrt{\delta\log(\delta^{-1})}$  shows
\[\lim_{R_L \to \infty} \PP\Big(\sup_{\Lambda_+ \in \mathcal{A}_+^\prime} \delta^{2}\lvert\{\alpha: \Sq(\alpha) \subset \Lambda_+\}\rvert^{-1} \big\lvert \sum_{\Sq(\alpha) \subset \Lambda_+} M_{\delta,\alpha} \big\rvert \geq C_1R_L\sqrt{\delta\log(\delta^{-1})}/2\Big) = 0, \]
whence 
\[\sup_{\Lambda_+ \in \mathcal{A}_+^\prime} \delta^{2}\lvert\{\alpha: \Sq(\alpha) \subset \Lambda_+\}\rvert^{-1} \big\lvert \sum_{\Sq(\alpha) \subset \Lambda_+} M_{\delta,\alpha} \big\rvert = \mathcal{O}_{\PP}\Big(\sqrt{\delta\log(\delta^{-1})}\Big).\]
Another application of a union bound and Proposition \ref{lem: orderFisher} yields
\begin{align*} 
&\PP\Big(\sup_{\Lambda_+ \in \mathcal{A}_+^\prime} \delta^{2}\lvert\{\alpha: \Sq(\alpha) \subset \Lambda_+\}\rvert^{-1}\big\lvert \sum_{\Sq(\alpha) \subset \Lambda_+}  (I_{\delta,\alpha} - \E[I_{\delta,\alpha}])\big\rvert \geq C_1/2\Big) \\
&\,\lesssim \sum_{\Lambda_+ \in \mathcal{A}_+^\prime} \exp\Big(-C_3\delta^{-2}\lvert\{\alpha: \Sq(\alpha) \subset \Lambda_+\}\rvert\Big)\\
&\,\leq \delta^{-c}\exp(-C_3\delta^{-2}) \underset{\delta \to 0}{\longrightarrow} 0.
\end{align*}
The above estimates now yield the first assertion.  The statement with $\Lambda_-$ replacing $\Lambda_+$ is proved in the same way. 
\item[(ii)] Same as (i).
\item[(iii)] Similarly to the proof of (i) we find
\begin{align*} 
&\PP\Big(\sup_{\Lambda_+ \in \mathcal{A}_+^\prime} \delta^{2}\lvert\{\alpha: \Sq(\alpha) \subset \Lambda_+\}\rvert^{-1} \big\lvert \sum_{\Sq(\alpha) \subset \Lambda_+} M_{\delta,\alpha} \big\rvert^2 \geq C_1z/2\Big)\\
&\, \leq \sum_{i=1}^d \bigg\{\PP\Big(\sup_{\Lambda_+ \in \mathcal{A}_+^i} \delta^{(i+1)/2} \big\lvert \sum_{\Sq(\alpha) \subset \Lambda_+ }\overline{M}_{\delta,\alpha} \big\rvert \geq (C_1z/8)^{1/2} \Big)  \\
&\,\quad\qquad + \PP\Big(\sup_{\Lambda_+ \in \mathcal{A}_+^i} \delta^{(i+1)/2} \sum_{\Sq(\alpha) \subset \Lambda_+} (M_{\delta,\alpha} - \overline{M}_{\delta,\alpha}) \big\rvert \geq (C_1z/8)^{1/2},  \sup_{\Lambda_+ \in \mathcal{A}_+^i} \sum_{\Sq(\alpha) \subset \Lambda_+} \lvert I_{\delta,\alpha} - \E[I_{\delta,\alpha}] \rvert \leq L_i \Big)\\
&\,\quad\qquad + \PP\Big(\delta^{-i}\sup_{\alpha \in [n]^d} \lvert I_{\delta,\alpha} - \E[I_{\delta,\alpha}] \rvert > L_i \Big) \bigg\}\\
&\,\lesssim  \sum_{i=1}^d \bigg\{\delta^{-d}\exp\Big(-C_3 \frac{L_i^2\delta^{2i}}{L_i\delta^{i} + C_4 \delta^{-2}}\Big)\\
&\qquad\quad+ \sum_{\Lambda_+ \in \mathcal{A}_+^i} \bigg( \exp\big(-\delta C_1z/16 \big) + \exp\big(-\delta^{-(1+i)}C_1z/(16L_i) \big) \bigg)  \bigg\}\\ 
&\,\lesssim  \sum_{i=1}^d \bigg\{\delta^{-d}\exp\Big(-C_3 \frac{L_i^2\delta^{2i}}{L_i\delta^{i} + C_4 \delta^{-2}}\Big) + \delta^{-c} \bigg( \exp\big(-\delta C_1z/16 \big) + \exp\big(-\delta^{-(1+i)}C_1z/(16L_i) \big) \bigg)  \bigg\}.
\end{align*}
Thus, choosing $L_i = R_L \delta^{-(1+i)}\sqrt{\log \delta^{-1}}$ and $z_L = R_L \delta^{-1} \log(\delta^{-1})$ for $R_L \to \infty$ shows that 
\[\sup_{\Lambda_+ \in \mathcal{A}_+} \delta^{-2}\lvert\{\alpha: \Sq(\alpha) \subset \Lambda_+\}\rvert \big\lvert \sum_{\Sq(\alpha) \subset \Lambda_+} M_{\delta,\alpha} \big\rvert^2 = \mathcal{O}_{\PP}(\delta^{-1}\log(\delta^{-1})).\]
The remainder of the proof is now carried out analogously to part (i).
\item[(iv)] For any $z > 0$ we have
\begin{align*} 
&\PP\Big(\big\lvert \sup_{ \Lambda_+ \in \mathcal{A}_+^\prime} \frac{\sum_{\Sq(\alpha) \subset \Lambda_+}\E[I_{\delta,\alpha}]}{\sum_{\Sq(\alpha) \subset \Lambda_+} I_{\delta,\alpha}} - 1\big\rvert \geq z\Big)\\
&\,= \PP\Big(\big\lvert \sup_{\Lambda_+ \in \mathcal{A}_+^\prime} \frac{\sum_{\Sq(\alpha) \subset \Lambda_+}(\E[I_{\delta,\alpha}] - I_{\delta,\alpha})}{I_{\delta,\alpha}}\big\rvert \geq z\Big)\\ 
&\,\leq \sum_{\Lambda_+ \in \mathcal{A}_+^\prime} \bigg\{\PP\Big(\frac{\lvert \sum_{\Sq(\alpha) \subset \Lambda_+}(I_{\delta,\alpha}- \E[I_{\delta,\alpha}])\rvert}{\sum_{\Sq(\alpha) \subset \Lambda_+} \E[I_{\delta,\alpha}]} \geq z/2\Big)\\
&\qquad\qquad+ \PP\Big(\big\lvert \sum_{\Sq(\alpha) \subset \Lambda_+} (I_{\delta,\alpha} -\E[I_{\delta,\alpha}]) \big\rvert \geq \sum_{\Sq(\alpha) \subset \Lambda_+} \E[I_{\delta,\alpha}]/2\Big)\bigg\}\\ 
 &\leq 2\sum_{\Lambda_+ \in \mathcal{A}_+^\prime} \bigg\{\exp\Big(-\frac{C_1\lvert \{\alpha: \Sq(\alpha) \subset \Lambda_+ \}\rvert \delta^{-2}z^2}{z + C_2} \Big)  + \exp\big(-C_3\lvert\{\alpha: \Sq(\alpha) \subset \Lambda_+\}\rvert \delta^{-2}\big)\bigg\}.
\end{align*}
Thus, using that $\lvert \mathcal{A}_+^\prime \rvert \leq \delta^{-c}$, it follows that 
\[\lim_{M \to \infty} \PP\Big(\big\lvert \sup_{\Lambda_+ \in \mathcal{A}_+^\prime} \frac{\sum_{\Sq(\alpha) \subset \Lambda_+}\E[I_{\delta,\alpha}]}{\sum_{\Sq(\alpha) \subset \Lambda_+} I_{\delta,\alpha}} - 1\big\rvert \geq M\delta\sqrt{\log(\delta^{-1}}\Big) = 0, \]
and therefore, 
\[\sup_{\Lambda_+ \in \mathcal{A}_+^\prime} \big\lvert \frac{\sum_{\Sq(\alpha) \subset \Lambda_+} \E[I_{\delta,\alpha}]}{\sum_{\Sq(\alpha) \subset \Lambda_+} I_{\delta,\alpha}} - 1\big\rvert = \mathcal{O}_{\PP}\Big(\delta \sqrt{\log(\delta^{-1}}) \Big).\]
The remaining assertion follows in the same way.
\item[(v)] Follows from similar (but slightly simpler) arguments to the proof of (iv).
\end{enumerate}
\end{proof}

\begin{proof}[Proof of Corollary \ref{coro:order_frac}]
 We can write 
\begin{align*} 
&\frac{\delta^{d+2}}{\sum_{\Sq(\alpha)^\circ \subset \tilde{\Lambda}_+} I_{\delta,\alpha}}\sum_{\Sq(\alpha)^\circ \subset \tilde{\Lambda}_+ \cap \Lambda^0_-} I_{\delta,\alpha}\sum_{\Sq(\alpha)^\circ \subset \tilde{\Lambda}_+ \cap \Lambda^0_+} M_{\delta,\alpha}\\
&\,= \delta^{d+2}\frac{\sum_{\Sq(\alpha)^\circ \subset \tilde{\Lambda}_+ \setminus \Lambda^0_+} \E[I_{\delta,\alpha}]\sum_{\Sq(\alpha)^\circ  \in \tilde{\Lambda}_+ \cap \Lambda^0_+} \E[I_{\delta,\alpha}]}{\sum_{\Sq(\alpha)^\circ \subset \tilde{\Lambda}_+} \E[I_{\delta,\alpha}]} \\
&\qquad \times \frac{\sum_{\Sq(\alpha)^\circ \subset \tilde{\Lambda}_+ \cap \Lambda^0_+} M_{\delta,\alpha}}{\sum_{\Sq(\alpha)^\circ \subset \tilde{\Lambda}_+ \cap \Lambda^0_+} I_{\delta,\alpha}} \frac{\frac{\sum_{\Sq(\alpha)^\circ \subset \tilde{\Lambda}_+ \setminus \Lambda^0_+} I_{\delta}(\alpha)}{\sum_{\Sq(\alpha)^\circ \subset \tilde{\Lambda}_+ \setminus \Lambda^0_+} \E[I_{\delta}(\alpha)]}\frac{\sum_{\Sq(\alpha)^\circ \subset \tilde{\Lambda}_+ \cap \Lambda^0_+} I_{\delta}(\alpha)}{\sum_{\Sq(\alpha)^\circ \subset \tilde{\Lambda}_+ \cap \Lambda^0_+} \E[I_{\delta}(\alpha)]} }{\frac{\sum_{\Sq(\alpha)^\circ \subset \tilde{\Lambda}_+} I_{\delta}(\alpha)}{\sum_{\Sq(\alpha)^\circ \subset \tilde{\Lambda}_+} \E[I_{\delta}(\alpha)]} } \one_{\tilde{\Lambda}_+ \cap \Lambda^0_+ \neq \varnothing}. 
\end{align*}
The first statement now follows from the fact that by Proposition \ref{lem: orderFisher} we have 
\[\delta^{d+2}\frac{\sum_{\Sq(\alpha)^\circ \subset \tilde{\Lambda}_+ \setminus \Lambda^0_+} \E[I_{\delta,\alpha}]\sum_{\Sq(\alpha)^\circ  \in \tilde{\Lambda}_+ \cap \Lambda^0_+} \E[I_{\delta,\alpha}]}{\sum_{\Sq(\alpha)^\circ \subset \tilde{\Lambda}_+} \E[I_{\delta,\alpha}]} \asymp \frac{\lebesgue(\tilde{\Lambda}_+ \setminus \Lambda^0_+)\lebesgue(\tilde{\Lambda}_+ \cap \Lambda^0_+)}{\lebesgue(\tilde{\Lambda}_+)}\]
and that by Lemma \ref{lem:order_frac} 
\begin{align*} 
&\bigg\lvert \frac{\sum_{\Sq(\alpha)^\circ \subset \tilde{\Lambda}_+\cap \Lambda^0_+} M_{\delta,\alpha}}{\sum_{\Sq(\alpha)^\circ \subset \tilde{\Lambda}_+} I_{\delta,\alpha}} \frac{\frac{\sum_{\Sq(\alpha)^\circ \subset \tilde{\Lambda}_+ \setminus \Lambda^0_+} I_{\delta}(\alpha)}{\sum_{\Sq(\alpha)^\circ \subset \tilde{\Lambda}_+ \setminus \Lambda^0_+} \E[I_{\delta}(\alpha)]}\frac{\sum_{\Sq(\alpha)^\circ \subset \tilde{\Lambda}_+ \cap \Lambda^0_+} I_{\delta}(\alpha)}{\sum_{\Sq(\alpha)^\circ \subset \tilde{\Lambda}_+ \cap \Lambda^0_+} \E[I_{\delta}(\alpha)]} }{\frac{\sum_{\Sq(\alpha)^\circ \subset \tilde{\Lambda}_+} I_{\delta}(\alpha)}{\sum_{\Sq(\alpha)^\circ \subset \tilde{\Lambda}_+} \E[I_{\delta}(\alpha)]} } \one_{\tilde{\Lambda}_+ \cap \Lambda^0_+ \neq \varnothing} \bigg\rvert\\ 
&\,\leq \sup_{\Lambda_+\in \mathcal{A}_+: \Lambda_+ \cap \Lambda^0_+ \neq \varnothing} \frac{\sum_{\Sq(\alpha)^\circ \subset \Lambda_+ \cap \Lambda^0_+} M_{\delta,\alpha}}{\sum_{\Sq(\alpha)^\circ \subset \Lambda_+ \cap \Lambda^0_+} I_{\delta,\alpha}} \sup_{\Lambda_+ \in \mathcal{A}_+: \Lambda_+ \cap \Lambda^0_{-} \neq \varnothing} \frac{\sum_{\Sq(\alpha)^\circ \subset \Lambda_+ \cap \Lambda^0_-}I_{\delta,\alpha}}{\sum_{\Sq(\alpha)^\circ \subset \Lambda_+\cap \Lambda^0_-} \E[I_{\delta,\alpha}]}\\ 
&\qquad\times \sup_{\Lambda_+ \in \mathcal{A}_+: \Lambda_+ \cap \Lambda^0_{+} \neq \varnothing} \frac{\sum_{\Sq(\alpha)^\circ \subset \Lambda_+ \cap \Lambda^0_+}I_{\delta,\alpha}}{\sum_{\Sq(\alpha)^\circ \subset \Lambda_+\cap \Lambda^0_+} \E[I_{\delta,\alpha}]} \sup_{\Lambda_+ \in \mathcal{A}_+^\prime} \frac{\sum_{\Sq(\alpha) \subset \Lambda_+}\E[I_{\delta,\alpha}]}{\sum_{\Sq(\alpha) \subset \Lambda_+} I_{\delta,\alpha}}\\ 
&\,= \mathcal{O}_{\PP}\Big(\sqrt{\delta\log(\delta^{-1})}\Big)
\end{align*}
The remaining statements are proved analogously.
\end{proof}

\begin{proof}[Proof of equation \eqref{eq:lower}]
    First, note that since in the context of \eqref{eq:lower}  the sets $\overline{\Lambda^0_+}$ and $\tilde{\Lambda}_+$ are tiling sets, the assumption $\tilde{\Lambda}_+ \notin \{\overline{\Lambda^0_+}, \Lambda^0_-\}$  is equivalent to 
$\lebesgue(\tilde{\Lambda}_+ \vartriangle \Lambda^0_+) \geq \delta^d$ and $\lebesgue(\tilde{\Lambda}_+ \cup \Lambda^0_+) \leq 1- \delta^d$. Furthermore, we observe that 
\[\frac{\lebesgue(\tilde{\Lambda}_+\setminus\Lambda_+^0)\lebesgue(\tilde{\Lambda}_+\cap\Lambda_+^0)}{\lebesgue(\tilde{\Lambda}_+)}+\frac{\lebesgue(\Lambda_+^0\setminus\tilde{\Lambda}_+)\lebesgue(\tilde{\Lambda}_-\cap \Lambda_-^0)}{\lebesgue(\tilde{\Lambda}_-)}\geq \lebesgue(\tilde{\Lambda}_+ \vartriangle \Lambda^0_+)\Big\{\lebesgue(\tilde{\Lambda}_+ \cap \Lambda^0_+) \wedge \underbrace{(1-\lebesgue(\tilde{\Lambda}_+ \cup \Lambda^0_+)}_{=\lebesgue(\tilde{\Lambda}_- \cap \Lambda^0_-)}\Big\}.\]
We  now distinguish three cases:

\noindent\underline{Case 1: $\lebesgue(\tilde{\Lambda}_+ \cap \Lambda^0_+) = 0$}

Then, $\lebesgue(\tilde{\Lambda}_- \cap \Lambda^0_-) \geq \delta^d$, for otherwise $\lebesgue(\tilde{\Lambda}_+ \cup \Lambda^0_+) = 1$. This implies 
\begin{align*}
\frac{\lebesgue(\tilde{\Lambda}_+\setminus\Lambda_+^0)\lebesgue(\tilde{\Lambda}_+\cap\Lambda_+^0)}{\lebesgue(\tilde{\Lambda}_+)}+\frac{\lebesgue(\Lambda_+^0\setminus\tilde{\Lambda}_+)\lebesgue(\tilde{\Lambda}_-\cap \Lambda_-^0)}{\lebesgue(\tilde{\Lambda}_-)} &\geq \frac{\lebesgue(\Lambda_+^0\setminus\tilde{\Lambda}_+)\lebesgue(\tilde{\Lambda}_-\cap \Lambda_-^0)}{\lebesgue(\tilde{\Lambda}_-)}\\
&\geq \lebesgue(\Lambda_+^0\setminus\tilde{\Lambda}_+)\lebesgue(\tilde{\Lambda}_-\cap \Lambda_-^0)\\
&= \lebesgue(\Lambda^0_+)\lebesgue(\tilde{\Lambda}_-\cap \Lambda_-^0) \geq \delta^d \kappa,
\end{align*}
for $\delta$ small enough.

\noindent\underline{Case 2: $\delta^d \leq \lebesgue(\tilde{\Lambda}_+ \cap \Lambda^0_+) \leq \lebesgue(\Lambda^0_+)/2$}

Then, for $\delta$ small enough
\[\lebesgue(\tilde{\Lambda}_+ \vartriangle \Lambda^0_+) \geq \lebesgue(\Lambda^0_+) - \lebesgue(\tilde{\Lambda}_+ \cap \Lambda^0_+) \geq \lebesgue(\Lambda^0_+)/2 \geq \kappa/2,\] 
and hence
\[\lebesgue(\tilde{\Lambda}_+ \vartriangle \Lambda^0_+)\Big\{\lebesgue(\tilde{\Lambda}_+ \cap \Lambda^0_+)\wedge (1-\lebesgue(\tilde{\Lambda}_+ \cup \Lambda^0_+))\Big\} \geq \delta^d \kappa/2.\]

\noindent\underline{Case 3: $\lebesgue(\Lambda^0_+)/2 < \lebesgue(\tilde{\Lambda}_+ \cap \Lambda^0_+)$}

Suppose first that $\lebesgue(\Lambda^0_+ \vartriangle \tilde{\Lambda}_+) \leq (1-\lebesgue(\Lambda^0_+))/2$. Then, for $\delta$ small enough
\[\lebesgue(\Lambda^0_+ \cup \tilde{\Lambda}_+) \leq (1-\lebesgue(\Lambda^0_+))/2+\lebesgue(\Lambda^0_+ \cap \tilde{\Lambda}_+) \leq (1+\lebesgue(\Lambda^0_+))/2 \leq (1+\kappa)/2,\]
and therefore 
\[1- \lebesgue(\Lambda^0_+ \cup \tilde{\Lambda}_+) \geq  (1 - \kappa)/2,\]
which, because $\kappa \leq 1- \kappa$, yields
\[\lebesgue(\tilde{\Lambda}_+ \vartriangle \Lambda^0_+)\Big\{\lebesgue(\tilde{\Lambda}_+ \cap \Lambda^0_+)\wedge (1-\lebesgue(\tilde{\Lambda}_+ \cup \Lambda^0_+))\Big\} \geq \delta^d \kappa/2.\]
If one the other hand $\lebesgue(\Lambda^0_+ \vartriangle \tilde{\Lambda}_+) > (1-\lebesgue(\Lambda^0_+))/2 \geq \kappa/2$, then we obtain directly
\begin{align*}
\lebesgue(\tilde{\Lambda}_+ \vartriangle \Lambda^0_+)\Big\{\lebesgue(\tilde{\Lambda}_+ \cap \Lambda^0_+)\wedge (1-\lebesgue(\tilde{\Lambda}_+ \cup \Lambda^0_+))\Big\} \geq \delta^d \kappa/2
\end{align*}
Combining all cases, the assertion follows.
\end{proof}

\section*{Acknowledgements}
The authors gratefully acknowledge financial support of Carlsberg Foundation Young Researcher Fellowship grant CF20-0640 ``Exploring the potential of nonparametric modelling of complex systems via SPDEs''.

\printbibliography

\end{document}